\shorttitle{Failure rate properties of parallel systems} 
\newcommand{\dE}{\mathbb{E}}
\newcommand{\TXs}[1]{\overline{T}_{X,#1}}
\newcommand{\TYs}[1]{\overline{T}_{Y,#1}}
\newcommand{\muXs}[1]{\widetilde{\mu}_{X,#1}}
\newcommand{\sIFR}[1]{\ensuremath{#1\!-\!}{\rm IFR}}
\newcommand{\sDFR}[1]{\ensuremath{#1\!-\!}{\rm DFR}}
\newcommand{\sIFRA}[1]{\ensuremath{#1\!-\!}{\rm IFRA}}
\newcommand{\sDFRA}[1]{\ensuremath{#1\!-\!}{\rm DFRA}}
\newcommand{\dI}{\ensuremath{\mathbb{I}}}
\numberwithin{equation}{section}  
\begin{document}

\title{Failure rate properties of parallel systems} 
\footnote{This work was partially supported by the Centre for Mathematics of the University of Coimbra -- UID/MAT/00324/2013, funded by the Portuguese Government through FCT/MEC and co-funded by the European Regional Development Fund through the Partnership Agreement PT2020.}

\authorone[CMUC, University of Coimbra, Portugal]{Idir Arab} 
\addressone{Department of Mathematics, PO Box 3008, EC Santa Cruz, Coimbra, Portugal} 
\authortwo[University of Central Lancashire, Cyprus]{Milto Hadjikyriakou}
\addresstwo{School of Sciences, 12-14 University Avenue, Pyla, 7080 Larnaka, Cyprus}
\authorthree[CMUC, Department of Mathematics, University of Coimbra, Portugal]{Paulo Eduardo Oliveira}
\addressthree{Department of Mathematics, PO Box 3008, EC Santa Cruz, Coimbra, Portugal}

\begin{abstract}
We study failure rate monotonicity and generalized convex transform stochastic ordering properties of random variables, with a concern on applications. We are especially interested in the effect of a tail weight iteration procedure to define distributions, which is equivalent to the characterization of moments of the residual lifetime at a given instant. For the monotonicity properties, we are mainly concerned with hereditary properties with respect to the iteration procedure providing counter-examples showing either that the hereditary property does not hold or that inverse implications are not true. For the stochastic ordering, we introduce a new criterium, based on the analysis of the sign variation of a suitable function. This criterium is then applied to prove ageing properties of parallel systems formed with components that have exponentially distributed lifetimes.
\end{abstract}

\keywords{convex stochastic order; iterated failure rate; sign variation} 

\ams{60E15}{60E05; 62N05} 

\section{Introduction}
According to Barlow and Proschan~\cite{BP-81}, one of the most important aims of reliability theory is to provide researchers with all the necessary tools to understand, estimate and optimize the life span and failure distributions of systems and their components. In reliability theory, ageing is defined as a phenomenon of increasing risk of failure with the passage of time. If the risk of failure is not increasing with age (the ``old is as good as new'' principle), then there is no ageing in terms of reliability theory, even if the calendar age of a system is increasing. Thus, the regular and progressive changes over time do not constitute ageing unless they produce some deleterious outcome (failures). Rausand and H\o{}yland~\cite{RH-04}, define failure as the event that makes the system to behave differently than what it is desired and expected. ``Positive ageing'' can be identified in cases where the residual lifetime tends to decrease with increasing age of the system. ``Negative ageing'' (also known as ``beneficial ageing'') has the exact opposite effect, but this is a less common situation and has attracted significantly less research interest.

\medskip

Ageing properties can be employed in order to define different classes of life distributions. Note that the exponential distribution is a member of almost every class, exactly because of its memoryless property.  Lifetime distributions can be characterized by their reliability function, the conditional survival function, their failure rate or their expected value of residual life. These quantities are used to express different notions of ageing also known as reliability classes. For example, distributions that have either increasing failure rate (IFR) or decreasing failure rate (DFR) have been studied by various researchers, while other notions such ``increasing failure rate on average'' (IFRA), ``new better than used'' (NBU), ``new worse than used'' (NWU), ``new better than used in expectation'' (NBUE), ``new worse than used in expectation'' (NWUE) and ``decreasing mean residual life'' (DMRL) have also attracted a lot of attention.

\medskip

The interesting properties of these ageing classes include preservation or closure properties of a given class under the formation of coherent systems of independent components, under convolution or mixture. It is also important to be able to provide reliability bounds and moment inequalities and test exponentiality against other lifetime distributions. Properties of IFR and DFR have been studied by Barlow and Proschan~\cite{BP-81}, Patel~\cite{Pt-83} while important results for IFRA can be found in Barlow and Proschan~\cite{BP-81}, Sengupta~\cite{Sp-94} and El-Bassiouny~\cite{ElB-03}. Abouammoh and El-Neweihi~\cite{AE-N86} showed that the NBU class is closed under formation of parallel systems of independent and identically distributed components while Barlow and Proschan~\cite{BP-81} provided probability bounds for NBU, NWU, NBUE and NWUE. Chen~\cite{CH-89} showed that the distributions of these classes may be characterized through certain properties of the corresponding renewal functions, Cheng and He~\cite{CH-89} studied the reliability bounds on NBUE and NWUE classes while Cheng and Lam~\cite{CL-02} obtained reliability bounds on NBUE from the first two known moments. Bryson and Siddiqui~\cite{BS-69} proved that IFR (DFR) implies DMRL (IMRL), Abouammoh and El-Neweihi~\cite{AE-N86} proved that DMRL classes are closed under the formation of parallel systems, while Abu-Youssef~\cite{AY-02} derived a moment inequality that was used by the author to derive a test for testing exponentiality against DMRL (IMRL).

\medskip

Another important aspect in the study of lifetime distributions is their order relations. These usually define partial orderings which establish the comparison between two lifetime variables in terms of their failure rates, density functions, survival functions, mean residual lives or other ageing characteristics. Ageing classes can often be characterized by some partial orderings. Barlow and Proschan~\cite{BP-81} proved that IFR and IFRA classes are characterized by some specific choice of ``convex ordering'' and ``star-shaped ordering'' respectively.  Partial ordering of lifetime distributions has been studied extensively by various authors (see for example, Desphande et al.~\cite{Des86}, Kochar and Wiens~\cite{KW-87}, Singh~\cite{SH-89}, Fagiuoli and Pellerey~\cite{FP93}, Shaked and Shanthikumar~\cite{SS07}) because of their applicability in a wide spectrum of different fields such as econometrics (Whitmore~\cite{Wh-70}), reliability (Barlow and Proschan~\cite{BP-81}), queues (Stoyan~\cite{Sy-83}) and other stochastic processes (Ross~\cite{Rss-83}). Singh and Jain~\cite{SJ-89} and Fagiuoli and Pellerey~\cite{FP93} have proposed an application to stochastic comparison between two devices that are subjected to Poisson shock models.

\medskip

Over the last decades there is an increasing interest in generalized partial orderings and several generalizations can be found in the literature. Some of these new ordering notions led to the creation of new ageing classes. Averous and Meste~\cite{AM89} and later Fagiuoli and Pellerey~\cite{FP93} introduced new concepts of partial stochastic ordering, namely $s-$FR, $s-$ST, $s-$CV, $s-$CX and $s-$SFR. For the case where $s=1$ or $s=2$ the new orderings is reduced to well-known stochastic orders. The authors provide relations between the new ordering concepts and classical partial orders and they also give the definitions of the related classes of life distributions. Nanda et al.~\cite{ASOK} introduced new generalized partial orderings, particularly the $\sIFR{s}$, $\sIFRA{s}$, $s-$NBU, $s-$NBUFR and $s-$NBAFR orderings. In their paper they also provide some equivalent representations for each ordering and they also discuss inter-relations among these orderings. Again for $s=1, 2, 3$ some of these new orderings are equivalent to already known partial stochastic orders. Despite the fact that for higher values of $s$ these partial orderings may not have clear and meaningful applications, their mathematical nicety and the fact that they unify existing results, make their study very interesting. Nevertheless, one motivation for these extensions can be found in Loh~\cite{Lh-84} where different types of generalized partial orderings were used for testing for discrepancies in the tails of symmetric distributions.

\medskip

Researchers are often interested in comparing the skewness of two distributions. van Zwet~\cite{VZ-70} introduced a new skewness order, the so-called convex transform order. In reliability theory the particular order is used to capture the fact that one distribution is more IFR-increasing failure rate than another distribution. Kochar and Xu~\cite{KX09} proved that a parallel system with heterogeneous exponential component lifetimes is more skewed (according to the IFR order) than the system with independent and identically distributed exponential components. In other words, they proved that a parallel system with homogeneous exponential components, ages faster than a system with heterogeneous exponential components in the sense of the ``smaller in IFR'' property. Note that in what follows the IFR ordering will be denoted by $\sIFR{1}$, following the notation introduced by the references mentioned above. Many authors have studied orderings of such systems when the parameters of the exponential distributions satisfy certain restrictions (see for example Dykstra et al.~\cite{DKR-97}, Khaledi and Kochar~\cite{KhK-00}, Kochar and Xu~\cite{KX-07a,KX-07b}, among many other authors). Recently, a number of researchers have also studied the case where the exponential distribution is substituted with some generalized versions (see, for example, Balakrishnan et al.~\cite{BHM-15}, Bashkar et al.~\cite{BTA-17}).

\medskip

In this paper, we study some properties of lifetimes that are either $\sIFR{s}$ or $\sIFRA{s}$ and at the same time we are interested in constructing criteria that will enable us to identify whether specific lifetime distributions are ordered via the $\sIFR{s}$ order. One of the main results of this work is that although in general, the $\sIFR{s}$ (or the $\sIFRA{s}$) ordering is not an inherited trait of distributions, Theorem 3.1 of Kochar and Xu~\cite{KX09} is verified for the $\sIFR{s}$ ordering where $s$ can be any positive integer.

\medskip

The paper is structured as follows: in Section 2 we provide some definitions and results that will be useful for the rest of the paper while Sections 3 and 4 refer to properties of distributions that are either $\sIFR{s}$ or $\sIFRA{s}$. In Section 5 we will present an example of distributions that proves that two stochastic orders that were reported in the literature as equivalent are in fact two different concepts. The main results of the paper are concentrated in Sections 6 and 7. Particularly, in Section 6  we provide a new criterium for the $\sIFR{s}$ ordering via $\sIFRA{s}$ order and in Section 7 this new criterium is used to prove ageing properties of parallel systems formed with components that have exponentially distributed lifetimes.


\section{Preliminaries}
We recall here the basic definitions and representations about the tail-weight iterated distributions. These iterated distributions were introduced by Averous and Meste~\cite{AM89} and initially studied by Fagiuoli and Pellerey~\cite{FP93}. Let $X$ be a nonnegative random variable with density function $f_X$, distribution function $F_X$, and tail function $\overline{F}_X=1-F_X$.
\begin{defn}
\label{def:s-iter}
For each $x\geq 0$, define
\begin{equation}
\TXs{0}(x)=f_{X}(x)\quad \mbox{and}\quad \muXs{0}=\int_0^\infty \TXs{0}(t)\,dt=1.
\end{equation}
For each $s\geq 1$, define the $s-$iterated distribution $T_{X,s}$ by its tail $\TXs{s}=1-T_{X,s}$ as follows:
\begin{equation}
\TXs{s}(x)=\frac{1}{\muXs{s-1}}\int_{x}^\infty\TXs{s-1}(t)\,dt
\quad \mbox{where}\quad \muXs{s}=\int_0^\infty \overline{T}_{X,s}(t)\,dt.
\end{equation}
Moreover, we extend the domain of definition of each $\TXs{s}$ by defining $\TXs{s}(x)=1$ for $x<0$.
\end{defn}
The distribution $\TXs{2}$ is also known as the equilibrium distribution of $X$, and plays an important role in ageing relations (see, for example, Chatterjee and Mukherjee~\cite{CM01}) and in renewal theory (see Cox~\cite{Cox62}). Hence, the iteration process above defines, for each $s\geq 1$, $\TXs{s}$ as the equilibrium distribution of a random variable with tail $\TXs{s-1}$. Although the definitions are introduced in a recursive way, a closed form representation for the iterated distributions is available.
\begin{lem}[Lemma 2 and Remark 3 in Arab and Oliveira~\cite{AO18}]
\label{Simple T_s}
The tails $\TXs{s}$ may be represented as
\begin{equation}
\label{eq:simpleTs}
\TXs{s}(x)=\frac{1}{\dE X^{s-1}}\int_x^\infty f_{X}(t)(t-x)^{s-1}\,dt.
\end{equation}
The $s-$iterated distribution moments are given by
\begin{equation}
\label{eq:moms}
\muXs{s}=\frac{1}{s}\frac{\dE X^{s}}{\dE X^{s-1}}.
\end{equation}
\end{lem}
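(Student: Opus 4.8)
The plan is to establish both formulas simultaneously by induction on $s$, with the closed form \eqref{eq:simpleTs} driving the argument and the moment identity \eqref{eq:moms} following from it at each stage.

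First I would verify the base case $s=1$. Directly from Definition \ref{def:s-iter}, $\TXs{1}(x) = \frac{1}{\muXs{0}}\int_x^\infty \TXs{0}(t)\,dt = \int_x^\infty f_X(t)\,dt = \overline{F}_X(x)$, which is exactly \eqref{eq:simpleTs} at $s=1$ since $(t-x)^0 = 1$ and $\dE X^0 = 1$. Integrating over $[0,\infty)$ and recalling $\int_0^\infty \overline{F}_X(t)\,dt = \dE X$ gives $\muXs{1} = \dE X = \frac{1}{1}\frac{\dE X^1}{\dE X^0}$, matching \eqref{eq:moms}.

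For the inductive step, I would assume both \eqref{eq:simpleTs} and \eqref{eq:moms} hold at level $s-1$ and insert the representation of $\TXs{s-1}$ into the recursion $\TXs{s}(x) = \frac{1}{\muXs{s-1}}\int_x^\infty \TXs{s-1}(u)\,du$. Since all integrands are nonnegative, Tonelli's theorem lets me interchange the order of integration over the region $\{x \le u \le t\}$, reducing the double integral to $\int_x^\infty f_X(t)\bigl(\int_x^t (t-u)^{s-2}\,du\bigr)\,dt$. The inner integral evaluates to $(t-x)^{s-1}/(s-1)$, and substituting the inductive moment formula $\muXs{s-1} = \frac{1}{s-1}\frac{\dE X^{s-1}}{\dE X^{s-2}}$ collapses the accumulated constants $(s-1)\,\muXs{s-1}\,\dE X^{s-2}$ down to $\dE X^{s-1}$, yielding exactly \eqref{eq:simpleTs} at level $s$.

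Finally, with the closed form in hand at level $s$, I would obtain \eqref{eq:moms} by integrating $\TXs{s}$ over $[0,\infty)$ and again swapping the order of integration (Tonelli), so that $\muXs{s} = \frac{1}{\dE X^{s-1}}\int_0^\infty f_X(t)\bigl(\int_0^t (t-x)^{s-1}\,dx\bigr)\,dt = \frac{1}{s\,\dE X^{s-1}}\int_0^\infty t^s f_X(t)\,dt = \frac{1}{s}\frac{\dE X^s}{\dE X^{s-1}}$. The only genuine care needed is the bookkeeping of normalizing constants propagated through the induction; otherwise the analysis is routine, since nonnegativity makes every Tonelli interchange automatic and the inner integrals are elementary power integrals.
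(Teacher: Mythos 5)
Your induction argument is correct: the base case, the Tonelli interchange over the region $\{x\le u\le t\}$, the evaluation of the inner power integrals, and the bookkeeping that collapses $(s-1)\,\muXs{s-1}\,\dE X^{s-2}$ to $\dE X^{s-1}$ all check out, and the moment formula then follows by one more interchange. Note that the paper itself states this lemma without proof, citing Lemma~2 and Remark~3 of Arab and Oliveira~\cite{AO18}, so there is no internal proof to compare against; your induction is the natural and standard derivation of that closed form from Definition~\ref{def:s-iter}. The only point worth making explicit is that each step tacitly assumes $\dE X^{s-1}<\infty$ so that the normalizing constants are finite, which is implicit in the paper's standing assumption that the iterated distributions are well defined.
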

Note that (\ref{eq:simpleTs}) may be rewritten as
\begin{equation}
\label{eq:rep.mom}
\TXs{s}(x)=\frac{1}{\dE X^{s-1}}\dE(X-x)_+^{s-1},
\end{equation}
where $(X-x)_+=\max(0,X-x)$ is the residual lifetime at age $x$. Therefore, the $s-$iterated distribution may be interpreted as the normalized survival moment of order $s-1$.

\medskip

One of the most simple and common ageing notion is defined through the monotonicity of the failure rate function of a distribution $\frac{f_X(x)}{1-F_X(x)}=\frac{\TXs{0}(x)}{\TXs{1}(x)}$. The direct verification of this monotonicity is, in general, not a simple task, as for many distributions the tail does not have an explicit closed representation or, at least, not a manageable one. Having defined iterated distributions, it becomes natural to proceed likewise with respect to the failure rate functions, as defined in Nanda et al.~\cite{ASOK} and also studied in Arab and Oliveira \cite{AO18}.
\begin{defn}
\label{def:s-fail}
For each $s\geq 1$ and $x\geq 0$, define the $s-$iterated failure rate function as
\begin{equation*}
r_{X,s}(x)=\frac{\overline{T}_{X,s-1}(x)}{\int_x^\infty\overline{T}_{X,s-1}(t)\,dt}
=\frac{\overline{T}_{X,s-1}(x)}{\widetilde{\mu}_{X,s-1}\overline{T}_{X,s}(x)}.
\end{equation*}
\end{defn}
It is obvious that for $s=1$ we find the failure rate of $X$,
$r_{X,1}(x)=\frac{f_{X}(x)}{\overline{F}_{X}(x)}$, hence the monotonicity of the failure rate is expressed as the monotonicity of $r_{X,1}$. We may extend this monotonicity notion by considering the $s-$iterated distribution, as done in Averous and Meste~\cite{AM89}, Fagiuoli and Pellerey~\cite{FP93}, Nanda et al.~\cite{ASOK}, among many other references.
\begin{defn}
\label{def:age}
For $s=1,2,\ldots$, the nonnegative random variable $X$ is said to be
\begin{enumerate}
\item
\sIFR{s} (resp. \sDFR{s}) if $r_{X,s}$ is increasing (resp. decreasing) for $x\geq 0$.
\item
\sIFRA{s} (resp. $\sDFRA{s}$) if $\frac{1}{x}\int_0^x r_{X,s}(t)\,dt$ is increasing (resp. decreasing) for $x>0$.
\end{enumerate}
\end{defn}
The above mentioned references introduce a few other monotonicity notions, but we refer only the ones to be addressed in the present paper. Remark that it follows easily from the definition above  that the $\sIFR{s}$ monotonicity of a random variable $X$ implies that the variable is also $\sIFRA{s}$.


%

\medskip

We introduce next the order relations to be addressed.
\begin{defn}
\label{DEF S-IFR}
Let $\mathcal{F}$ denote the family of distributions functions such that $F(0)=0$, $X$ and $Y$ be nonnegative random variables with distribution functions $F_X,F_Y\in\mathcal{F}$, and $s\geq 1$ an integer.
    \begin{enumerate}
    \item
    The random variable $X$ (or its distribution $F_X$) is said smaller than $Y$ (or its distribution $F_Y$) in \sIFR{s} order, and we write $X\leq_{\sIFR{s}}Y$, or equivalently, $F_X\leq_{\sIFR{s}}F_Y$, if $c_s(x)=\TYs{s}^{-1}(\TXs{s}(x))$ is convex.
    \item
    The random variable $X$ (or its distribution $F_X$) is said smaller than $Y$ (or its distribution $F_Y$) in \sIFRA{s} order, and we write $X\leq_{\sIFRA{s}}Y$, or equivalently, $F_X\leq_{\sIFRA{s}}F_Y$, if $t_s(x)=\frac{1}{x}c_s(x)$ is increasing (this is also known as $c_s(x)$ being star-shaped).
    \end{enumerate}
\end{defn}
Fagiuoli and Pellerey~\cite{FP93} and Nanda et al.~\cite{ASOK} concentrated on establishing relations between the ordering notions defined. It is useful to note that these order relations define partial order relations in the equivalence classes of $\mathcal{F}$ corresponding to the equivalence relation $F\sim G$ defined by $F(x)=G(kx)$, for some $k>0$. In case of families of distributions that have a scale parameter, this allows to choose the parameter in the most convenient way.

\smallskip

The exponential distribution plays an important role when dealing with ageing notions. Besides being a fixed point with respect to the iteration procedure, the comparability with the exponential, either in the $\sIFR{s}$ or $\sIFRA{s}$ sense, is equivalent to the $\sIFR{s}$ or $\sIFRA{s}$ monotonicity as proved by Nanda et al.~\cite{ASOK} (see Theorems~3.2 and 4.3)
\begin{thm}
\label{thm:expon}
Let $X$ be a random variable with distribution function $F_X\in\mathcal{F}$ and $Y$ with exponential distribution. 
\begin{enumerate}
\item
$X\leq_{\sIFR{s}}Y$ (resp., $Y\leq_{\sIFR{s}}X$) if and only if $X$ is \sIFR{s} (resp., $X$ is \sDFR{s}).
\item
$X\leq_{\sIFRA{s}}Y$ (resp., $Y\leq_{\sIFRA{s}}X$) if and only if $X$ is \sIFRA{s} (resp., $X$ is \sDFRA{s}).
\end{enumerate}
\end{thm}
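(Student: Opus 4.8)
The plan is to exploit the fact that the exponential distribution is a fixed point of the iteration procedure, which collapses the composition $c_s$ into an explicit logarithm whose derivative is exactly the iterated failure rate. First I would record that if $Y$ is exponential with $\overline{F}_Y(x)=e^{-\lambda x}$, then a one-line induction on the recursion of Definition~\ref{def:s-iter} gives $\TYs{s}(x)=e^{-\lambda x}$ for every $s\geq 1$: at each step $\muYs{s-1}=1/\lambda$ and $\int_x^\infty e^{-\lambda t}\,dt=\frac{1}{\lambda}e^{-\lambda x}$, so the normalising constant cancels. Since none of the orderings is affected by scale (by the remark following Definition~\ref{DEF S-IFR}), I may take $\lambda=1$, whence $\TYs{s}^{-1}(v)=-\ln v$ and the comparison function becomes
\begin{equation*}
c_s(x)=\TYs{s}^{-1}(\TXs{s}(x))=-\ln\TXs{s}(x).
\end{equation*}

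Next I would differentiate. The recursion gives $(\TXs{s})'(x)=-\frac{1}{\muXs{s-1}}\TXs{s-1}(x)$, so
\begin{equation*}
c_s'(x)=-\frac{(\TXs{s})'(x)}{\TXs{s}(x)}=\frac{\TXs{s-1}(x)}{\muXs{s-1}\TXs{s}(x)}=r_{X,s}(x),
\end{equation*}
the last equality being exactly Definition~\ref{def:s-fail}. This single identity carries the whole argument. For part~1, $c_s$ is convex precisely when $c_s'=r_{X,s}$ is increasing, which is the $\sIFR{s}$ property, establishing that $X\leq_{\sIFR{s}}Y$ holds if and only if $X$ is $\sIFR{s}$. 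For part~2, since $F_X(0)=0$ forces $\TXs{s}(0)=1$ and hence $c_s(0)=0$, integration yields $c_s(x)=\int_0^x r_{X,s}(t)\,dt$; therefore $t_s(x)=\frac{1}{x}c_s(x)=\frac{1}{x}\int_0^x r_{X,s}(t)\,dt$, whose monotonicity is by definition the $\sIFRA{s}$ property.

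Finally, for the two ``resp.'' statements I would swap the roles of $X$ and $Y$ and note that the relevant comparison function is now $\TXs{s}^{-1}(\TYs{s}(x))=c_s^{-1}(x)$, the inverse of the increasing function above (increasing because $c_s'=r_{X,s}\geq 0$). The step I expect to require the most care is transferring the convexity/star-shapedness of $c_s$ to its inverse: an increasing function is convex if and only if its inverse is concave, so $Y\leq_{\sIFR{s}}X$ amounts to $c_s$ being concave, i.e.\ $r_{X,s}$ decreasing, the $\sDFR{s}$ property; likewise $c_s(x)/x$ increasing is equivalent to $c_s^{-1}(x)/x$ decreasing, so $Y\leq_{\sIFRA{s}}X$ corresponds to $\frac{1}{x}\int_0^x r_{X,s}(t)\,dt$ being decreasing, the $\sDFRA{s}$ property. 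One can avoid the inverse-function bookkeeping altogether by instead differentiating $\TXs{s}^{-1}(e^{-x})$ directly, which reproduces the same monotonicity conditions.
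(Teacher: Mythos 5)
Your proof is correct. Note, however, that the paper does not prove this statement at all: it is quoted directly from Nanda et al.\ (Theorems~3.2 and~4.3 of that reference), so there is no in-paper argument to compare against. Your derivation is a clean, self-contained verification: the key observations are that the exponential is a fixed point of the iteration (so $\TYs{s}(x)=e^{-x}$ after normalising the scale, which the remark after Definition~\ref{DEF S-IFR} permits), and that the recursion gives $c_s'(x)=-\bigl(\ln\TXs{s}(x)\bigr)'=r_{X,s}(x)$, which reduces convexity of $c_s$ to monotonicity of $r_{X,s}$ and, via $c_s(0)=0$, star-shapedness of $c_s$ to monotonicity of $\frac{1}{x}\int_0^x r_{X,s}(t)\,dt$. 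The inverse-function step for the ``resp.''\ parts is the only place requiring care, and you handle it correctly: for an increasing bijection $g$ with $g(0)=0$, convexity of $g$ is equivalent to concavity of $g^{-1}$, and $g(x)/x$ increasing is equivalent to $g^{-1}(y)/y$ decreasing, which is exactly what is needed to turn $Y\leq_{\sIFR{s}}X$ and $Y\leq_{\sIFRA{s}}X$ into the \sDFR{s} and \sDFRA{s} properties of $X$.
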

As an immediate consequence of the above, we have the following comparison results.
\begin{cor}
Let $X$ and $Y$ be random variables with distribution functions $F_X,F_Y\in\mathcal{F}$, and $s\geq 1$ an integer. If $X$ is $\sIFR{s}$ and $Y$ is $\sDFR{s}$, then $X\leq_{\sIFR{s}}Y$. The same holds replacing {\rm IFR} and {\rm DFR} by {\rm IFRA} and {\rm DFRA}, respectively.
\end{cor}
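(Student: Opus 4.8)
The plan is to use Theorem~\ref{thm:expon} to reduce the comparison between $X$ and $Y$ to two comparisons against an exponential benchmark, and then to close the argument by a transitivity property of the underlying convexity. Let $Z$ be an exponentially distributed random variable. Since $X$ is \sIFR{s}, part~(1) of Theorem~\ref{thm:expon} gives $X\leq_{\sIFR{s}}Z$, that is, the function $g(x)=\overline{T}_{Z,s}^{-1}(\TXs{s}(x))$ is convex. Since $Y$ is \sDFR{s}, applying the same theorem with the role of its first variable played by $Y$ gives $Z\leq_{\sIFR{s}}Y$, that is, $h(x)=\TYs{s}^{-1}(\overline{T}_{Z,s}(x))$ is convex.

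The key observation I would record next is the composition identity
\begin{equation*}
c_s(x)=\TYs{s}^{-1}(\TXs{s}(x))=h(g(x)),
\end{equation*}
which follows at once from $\overline{T}_{Z,s}\circ\overline{T}_{Z,s}^{-1}=\mathrm{id}$. Because each iterated tail is (strictly) decreasing, both $g$ and $h$ are increasing, being compositions of two decreasing maps; moreover $\TXs{s}(0)=1$ and $\TYs{s}(0)=1$ force $g(0)=h(0)=c_s(0)=0$. It then remains to invoke the elementary fact that a composition of two convex increasing functions is convex, whence $c_s=h\circ g$ is convex and $X\leq_{\sIFR{s}}Y$.

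For the \sIFRA{s} statement I would run exactly the same scheme, replacing convexity by the star-shaped property throughout. By Theorem~\ref{thm:expon}(2), $X$ being \sIFRA{s} yields $X\leq_{\sIFRA{s}}Z$, i.e.\ $g$ is star-shaped, and $Y$ being \sDFRA{s} yields $Z\leq_{\sIFRA{s}}Y$, i.e.\ $h$ is star-shaped. Using the same identity $c_s=h\circ g$ together with $g(0)=h(0)=0$, I would verify star-shapedness of the composite through the scaling characterization $\phi(\alpha x)\leq\alpha\phi(x)$ valid for $\alpha\in[0,1]$: from $g(\alpha x)\leq\alpha g(x)$, monotonicity of $h$, and then star-shapedness of $h$, one obtains $h(g(\alpha x))\leq h(\alpha g(x))\leq\alpha\,h(g(x))$, so $c_s$ is star-shaped and $X\leq_{\sIFRA{s}}Y$.

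I do not expect a genuine obstacle here: the whole content is the composition identity plus two classical closure facts (convex$\,\circ\,$convex-increasing is convex; star-shaped$\,\circ\,$star-shaped-increasing is star-shaped). The only points deserving a line of care are checking that $g$ and $h$ are increasing, which is what lets the outer map be applied in the correct monotone direction, and confirming the normalizations $g(0)=h(0)=0$, which legitimize the scaling formulation used in the \sIFRA{s} case.
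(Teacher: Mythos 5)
Your proposal is correct and follows the route the paper intends: the paper leaves this corollary as an immediate consequence of Theorem~\ref{thm:expon}, i.e.\ chaining $X\leq_{\sIFR{s}}Z\leq_{\sIFR{s}}Y$ through an exponential $Z$ and invoking transitivity of the order, which is exactly what your composition identity $c_s=h\circ g$ together with the closure of convexity (resp.\ star-shapedness) under composition with an increasing convex (resp.\ star-shaped) outer map makes explicit. The only difference is that you supply the transitivity argument in detail rather than citing it as a property of the partial order, and your verifications of $g(0)=h(0)=0$ and of the monotonicity of $g$ and $h$ are accurate.
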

A general characterization of the above order relations is given below (see Propositions~3.1 and 4.1 in Nanda et al.~\cite{ASOK}).
\begin{thm}
\label{convexity-equivalence}
Let $X$ and $Y$ be random variables with distribution functions $F_X,F_Y\in\mathcal{F}$.
    \begin{enumerate}
    \item
    $X\leq_{\sIFR{s}}Y$ if and only if for any real numbers $a$ and $b$, $\TYs{s}(x)-\TXs{s}(ax+b)$ changes sign at most twice, and if the change of signs occurs twice, it is in the order ``$+,-,+$'', as $x$ traverses from $0$ to $+\infty$.
    \item
    $X\leq_{\sIFRA{s}}Y$ if and only if for any real number $a$, $\TYs{s}(x)-\TXs{s}(ax)$ changes sign at most twice, and if the change of signs occurs twice, it is in the order ``$-,+$'', as $x$ traverses from $0$ to $+\infty$.
    \end{enumerate}

\begin{rem}
\label{AOalpha}
As mentioned in Remark 25 in Arab and Oliveira~\cite{AO18}, it is enough to verify the above characterizations only for $a>0$.
\end{rem}
\end{thm}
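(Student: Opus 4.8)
The plan is to reduce each equivalence to a purely geometric property of the transform $c_s=\TYs{s}^{-1}\circ\TXs{s}$ appearing in Definition~\ref{DEF S-IFR} -- convexity in part (1), star-shapedness in part (2) -- and then to translate that property into the announced sign-variation statement by using the strict monotonicity of the tail $\TYs{s}$. The classical facts I would invoke are: an increasing function $h$ on $[0,\infty)$ is convex if and only if for every affine $\ell$ the difference $h-\ell$ has at most two sign changes, in the order ``$+,-,+$'' when there are two; and $h$ with $h(0)=0$ is star-shaped if and only if for every $\alpha>0$ the function $h(x)-\alpha x$ never passes from $+$ to $-$, i.e. its positivity set is an upper ray. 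Since $c_s$ is the composition of the two \emph{decreasing} maps $\TYs{s}^{-1}$ and $\TXs{s}$, it is increasing with $c_s(0)=0$, so both characterizations apply.

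The computation linking the two pictures is the key identity. Writing $G=\TYs{s}$ and $F=\TXs{s}$, the relation $F(ax+b)=G(c_s(ax+b))$ together with the strict monotonicity of $G$ yields, for every $x\ge 0$,
\[
\operatorname{sign}\bigl(\TYs{s}(x)-\TXs{s}(ax+b)\bigr)=\operatorname{sign}\bigl(c_s(ax+b)-x\bigr).
\]
So the sign pattern of the tail difference is exactly that of $x\mapsto c_s(ax+b)$ measured against the identity line. For the forward direction of part (1): if $a\le 0$ then $x\mapsto\TXs{s}(ax+b)$ is nondecreasing while $G$ is nonincreasing, so the difference is monotone and changes sign at most once; hence only $a>0$ is restrictive, as in Remark~\ref{AOalpha}. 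For $a>0$, convexity of $c_s$ makes $x\mapsto c_s(ax+b)-x$ convex, thus negative on an interval and nonnegative outside, giving at most two sign changes in the order ``$+,-,+$''.

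The converse of part (1) is the step I expect to be the main obstacle, and I would argue it by contradiction with a perturbation. If $c_s$ is not convex there are $x_1<x_2<x_3$ with $c_s(x_2)$ strictly above the chord $L$ through $(x_1,c_s(x_1))$ and $(x_3,c_s(x_3))$; since $c_s$ is increasing, $L$ has positive slope $\alpha$, and because $c_s\ge 0$ the root of $L$ lies at or below $x_1$. Replacing $L$ by $L+\varepsilon$ for small $\varepsilon>0$ keeps its root below $x_1$ and makes $c_s-(L+\varepsilon)$ negative at $x_1,x_3$ and positive at $x_2$. Encoding $L+\varepsilon$ as the identity comparison via $a=1/\alpha>0$ and the corresponding $b$ -- which, because that root lies below $x_1$, satisfies $b\le x_1$, so that all three abscissae are attained by values $x\ge 0$ -- the tail difference $\TYs{s}(x)-\TXs{s}(ax+b)$ then exhibits the pattern ``$-,+,-$''. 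A function that is negative only on a single interval cannot realize this, so it contradicts ``at most two sign changes in the order $+,-,+$''; hence $c_s$ is convex. The delicate bookkeeping is precisely placing the three test points in the admissible range $x\ge 0$ (handled by the chord-root estimate) and using the $\varepsilon$-shift to turn the tangency at the chord endpoints into genuine sign changes.

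Part (2) follows the same scheme with $b=0$ and lines through the origin, reflecting star-shapedness in place of convexity. The same identity with $b=0$ gives $\operatorname{sign}(\TYs{s}(x)-\TXs{s}(ax))=\operatorname{sign}(c_s(ax)-x)$. If $c_s$ is star-shaped then for $a>0$, writing $u=ax$, this equals $\operatorname{sign}\bigl(c_s(u)/u-1/a\bigr)$, which is nondecreasing in $u$ and therefore passes from $-$ to $+$ at most once, yielding the order ``$-,+$'' (the cases $a\le 0$ again give a monotone, single-crossing difference). Conversely, if $c_s$ is not star-shaped there are $0<x_1<x_2$ with $c_s(x_1)/x_1>c_s(x_2)/x_2$; choosing $\alpha$ strictly between these ratios and setting $a=1/\alpha$ makes $c_s(u)-\alpha u$ positive at $u=x_1$ and negative at $u=x_2$, so $\TYs{s}(x)-\TXs{s}(ax)$ passes from $+$ to $-$, the forbidden order. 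This contradiction establishes star-shapedness and completes the proof.
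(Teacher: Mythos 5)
Your proposal is correct, but note that the paper itself does not prove Theorem~\ref{convexity-equivalence}: it is imported verbatim from Propositions~3.1 and 4.1 of Nanda et al.~\cite{ASOK}, so there is no in-paper argument to compare against. What you supply is a self-contained proof along the standard lines: the identity $\operatorname{sign}(\TYs{s}(x)-\TXs{s}(ax+b))=\operatorname{sign}(c_s(ax+b)-x)$ (valid because $\TYs{s}$ is strictly decreasing), the fact that a convex function minus an affine one has sign pattern at most ``$+,-,+$'', and the chord-plus-$\varepsilon$ perturbation for the converse. The bookkeeping you flag as delicate is handled correctly: the chord slope is forced positive by monotonicity of $c_s$, its root sits at or below $x_1$ because $c_s\geq 0$, so the three test abscissae land in $x\geq 0$, and the pattern ``$-,+,-$'' indeed violates the stated condition no matter what happens elsewhere on the half-line. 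Two small remarks. First, in part (2) the theorem's literal wording (``changes sign at most twice \dots in the order `$-,+$'\,'') is internally inconsistent, since ``$-,+$'' is one change; you silently prove the intended statement (at most one change, from $-$ to $+$), which is also how the paper uses the result in Theorem~\ref{AOmain1} and Proposition~\ref{prop2}, so this is the right reading. Second, your argument tacitly assumes $\TYs{s}$ is continuous and strictly decreasing on its support so that $\TYs{s}^{-1}$ is a genuine inverse and $c_s$ is continuous (needed for the chord argument); this is automatic for $s\geq 2$ and holds for $s=1$ under the absolute continuity assumed throughout the paper, but it is worth stating.
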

The above characterization requires explicit expressions of the tails of the iterated distributions, which are often not available. Computationally tractable characterizations to decide about the actual comparison of general distributions were studied in Arab and Oliveira~\cite{AO18}. We quote the characterization proved in Theorem~27 and Corollary~29 in \cite{AO18}.
\begin{thm}
\label{AOmain}
Let $X$ and $Y$ be random variables with absolutely continuous distributions with densities $f_X$ and $f_Y$ and distribution functions $F_X,F_Y\in\mathcal{F}$, respectively. If, for every constants $a>0$ and $b\in\mathbb{R}$, either of the functions,
$$
H_s(x)=\frac{1}{\dE Y^{s-1}}f_Y(x)-\frac{a^{s}}{\dE X^{s-1}}f_X(ax+b)
$$
or
$$
H_{s-1}(x)=\frac{1}{\dE Y^{s-1}}\overline{F}_Y(x)-\frac{a^{s-1}}{\dE X^{s-1}}\overline{F}_X(ax+b).
$$
changes sign at most twice when $x$ traverses from $0$ to $+\infty$, and if the change of sign occurs twice, it is in the order ``$+,-,+$'', then $F_X\leq_{\sIFR{s}}F_Y$.

The functions $H_s$ and $H_{s-1}$ may, respectively, be replaced by
$$
P_s(x)=\log f_Y(x)-\log f_X(ax+b)+\log\frac{\dE X^{s-1}}{a^{s}\dE Y^{s-1}},
$$
and
$$
P_{s-1}(x)=\log \overline{F}_Y(x)-\log\overline{F}_X(ax+b)+\log\frac{\dE X^{s-1}}{a^{s-1}\dE Y^{s-1}}.
$$
\end{thm}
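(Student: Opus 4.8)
The plan is to reduce the claim to the sign-variation characterization of the $\sIFR{s}$ order in Theorem~\ref{convexity-equivalence}(1) and then to transfer the prescribed sign behaviour from $H_s$ (or $H_{s-1}$) to the function governing that characterization. Fix $a>0$ and $b\in\mathbb{R}$ (it suffices to treat $a>0$, by Remark~\ref{AOalpha}) and set
$$
g(x)=\TYs{s}(x)-\TXs{s}(ax+b).
$$
Using the moment representation \eqref{eq:simpleTs} for both tails and the substitution $t=au+b$ in the term involving $X$ (so that $t-(ax+b)=a(u-x)$ and $dt=a\,du$), I would obtain the integral identity
$$
g(x)=\int_x^\infty\Bigl[\frac{1}{\dE Y^{s-1}}f_Y(u)-\frac{a^{s}}{\dE X^{s-1}}f_X(au+b)\Bigr](u-x)^{s-1}\,du=\int_x^\infty H_s(u)\,(u-x)^{s-1}\,du.
$$
The same substitution applied to the tails rather than the densities gives the companion identity $H_{s-1}(x)=\int_x^\infty H_s(u)\,du$, exhibiting $H_{s-1}$ as the forward integral of $H_s$.

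Next I would introduce the operator $T\phi(x)=\int_x^\infty\phi(u)\,du$ and record the elementary iterated-integral formula $\int_x^\infty\phi(u)\frac{(u-x)^{k}}{k!}\,du=T^{k+1}\phi(x)$, proved by induction on $k$ after interchanging the order of integration. Applied to the two identities above this yields
$$
g=(s-1)!\,T^{s}H_s=(s-1)!\,T^{s-1}H_{s-1},
$$
with the convention $T^0=\mathrm{id}$ (this also covers $s=1$, where $g=H_0=H_{s-1}$ directly). Since multiplication by the positive constant $(s-1)!$ does not affect sign changes, everything now hinges on a single variation-diminishing lemma: the operator $T$ maps any function satisfying property $P$ --- \emph{``at most two sign changes, and in the order $+,-,+$ if exactly two''} --- to a function again satisfying $P$. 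Granting this lemma, $g$ inherits property $P$ from $H_s$ (through $s$ applications of $T$) or from $H_{s-1}$ (through $s-1$ applications), and Theorem~\ref{convexity-equivalence}(1) then delivers $F_X\leq_{\sIFR{s}}F_Y$.

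The heart of the argument, and the step I expect to be most delicate, is the lemma on $T$. The plan is to exploit the two structural facts $(T\phi)'(x)=-\phi(x)$ and $T\phi(x)\to 0$ as $x\to\infty$, together with the observation that the sign of $T\phi$ near $+\infty$ equals the ultimate sign of $\phi$. One then runs through the admissible sign profiles of $\phi$: if $\phi$ has the pattern $+,-,+$ with changes at $c_1<c_2$, then $T\phi$ is decreasing on $(0,c_1)$, increasing on $(c_1,c_2)$, and decreasing to $0^+$ on $(c_2,\infty)$, so its only candidate negative excursion is the valley at $c_1$; examining the signs of $T\phi(c_1)$ and $T\phi(0)$ shows that $T\phi$ is either nonnegative throughout or displays exactly the pattern $+,-,+$ (or a single $-,+$ change), while constant-sign and single-change inputs are handled the same way. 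The careful bookkeeping of these monotonicity intervals and of the limiting sign at infinity is where the real work lies, but no admissible input produces a forbidden pattern, so $P$ is preserved; a light integrability hypothesis (inherited from the fact that the $\TXs{s}$, $\TYs{s}$ are genuine tails vanishing at infinity) guarantees the boundary behaviour used above.

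Finally, the logarithmic reformulations are immediate. Wherever the relevant densities (resp.\ tails) are positive, $H_s(x)$ and $P_s(x)$ (resp.\ $H_{s-1}(x)$ and $P_{s-1}(x)$) have identical signs, since $P_s$ is precisely the logarithm of the ratio of the two terms composing $H_s$, and likewise for $P_{s-1}$. Consequently $H_s$ and $P_s$ share the same sign changes and the same ordering of signs, as do $H_{s-1}$ and $P_{s-1}$, so the criteria phrased in terms of $P_s,P_{s-1}$ are equivalent to those phrased in terms of $H_s,H_{s-1}$, completing the reduction.
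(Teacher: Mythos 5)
Your proposal is correct and takes essentially the approach behind the paper's version of this result: the paper only quotes the theorem from Arab and Oliveira~\cite{AO18}, but it explicitly identifies Lemma~\ref{sign-integral} (sign variation after integration) as the key tool of that proof, and your reduction of $\TYs{s}(x)-\TXs{s}(ax+b)$ to an iterated integral of $H_s$ (with $H_{s-1}=\int_x^\infty H_s$ as the first step), followed by repeated application of exactly that variation-diminishing property and an appeal to Theorem~\ref{convexity-equivalence}, reconstructs that argument, with the logarithmic variants disposed of by sign equivalence just as intended. The only detail left implicit is the initial interval where $ax+b<0$, on which the convention $\TXs{s}(ax+b)=1$ differs from the integral representation you use; since both expressions are nonpositive there and agree for $ax+b\geq0$, this does not introduce a forbidden sign pattern, so the argument stands.
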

The next statement provides a criterium to verify the $\sIFRA{s}$ order relation. We do not include a proof, as this follows reproducing the arguments presented in Arab and Oliveira~\cite{AO18} for the proof of the previous result.
\begin{thm}
\label{AOmain1}
Let $X$ and $Y$ be random variables with absolutely continuous distributions with densities $f_X$ and $f_Y$ and distribution functions $F_X,F_Y\in\mathcal{F}$, respectively. If, for every $a>0$ and $b=0$, either of the functions $H_s(x)$ or $H_{s-1}(x)$ changes sign at most once when $x$ traverses from $0$ to $+\infty$, and if the change of sign occurs, it is in the order ``$-,+$'', then $F_X\leq_{\sIFRA{s}}F_Y$. The functions $H_s$ and $H_{s-1}$ may, respectively, be replaced by $P_s(x)$ and $P_{s-1}(x)$.
\end{thm}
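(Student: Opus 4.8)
The plan is to reduce the claim, via Theorem~\ref{convexity-equivalence}(2), to a statement about sign changes: it suffices to prove that for every $a>0$ the difference $\TYs{s}(x)-\TXs{s}(ax)$ changes sign at most once, and, when it does, in the order ``$-,+$'' as $x$ runs over $(0,\infty)$. By Remark~\ref{AOalpha} only $a>0$ needs to be considered, and the restriction $b=0$ is precisely what replaces convexity (tested against all lines $ax+b$) by star-shapedness (tested against the lines $ax$ through the origin). The first step is to rewrite this difference as an integral transform of the hypothesis function. Inserting the closed form (\ref{eq:simpleTs}) and substituting $t=a\tau$ in $\TXs{s}(ax)=\frac{1}{\dE X^{s-1}}\int_{ax}^\infty f_X(t)(t-ax)^{s-1}\,dt$ gives
\[
\TYs{s}(x)-\TXs{s}(ax)=\int_x^\infty H_s(\tau)\,(\tau-x)^{s-1}\,d\tau,
\]
with $H_s$ the hypothesis function evaluated at $b=0$. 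One integration by parts in each term (trading $f_X,f_Y$ for $\overline{F}_X,\overline{F}_Y$; the boundary terms vanish because finiteness of $\dE X^{s-1}$ forces $(\tau-x)^{s-1}\overline{F}_X(\tau)\to0$) produces the companion form
\[
\TYs{s}(x)-\TXs{s}(ax)=(s-1)\int_x^\infty H_{s-1}(\tau)\,(\tau-x)^{s-2}\,d\tau,
\]
valid for $s\geq2$; for $s=1$ the difference is simply $\overline{F}_Y(x)-\overline{F}_X(ax)$ and is treated directly.

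The heart of the argument, and the step I expect to be the main obstacle, is a variation-diminishing lemma for the kernel $(\tau-x)^k$ with $k\geq0$: \emph{if $g$ changes sign at most once, in the order ``$-,+$'', then $\phi(x)=\int_x^\infty g(\tau)(\tau-x)^k\,d\tau$ does too.} Let $\tau_0$ be the sign-change point of $g$. For $x\geq\tau_0$ the integrand is nonnegative, so $\phi(x)\geq0$. It remains to exclude a ``$+,-$'' crossing on $(0,\tau_0)$, and the cleanest route is to show $\phi'(x)\geq0$ at every zero $x_1<\tau_0$ of $\phi$. For $k\geq1$ one differentiates to get $\phi'(x)=-k\int_x^\infty g(\tau)(\tau-x)^{k-1}\,d\tau$, while $\phi(x_1)=0$ means that the negative mass $\int_{x_1}^{\tau_0}|g|(\tau-x_1)^{k}\,d\tau$ exactly balances the positive mass $\int_{\tau_0}^\infty g\,(\tau-x_1)^{k}\,d\tau$. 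Replacing the weight $(\tau-x_1)^{k}$ by $(\tau-x_1)^{k-1}$ over-weights the negative part $\{\tau<\tau_0\}$ and under-weights the positive part $\{\tau>\tau_0\}$, because $(\tau-x_1)^{k-1}$ exceeds $(\tau-x_1)^{k}/(\tau_0-x_1)$ for $\tau<\tau_0$ and falls below it for $\tau>\tau_0$; this forces $\int_{x_1}^\infty g(\tau)(\tau-x_1)^{k-1}\,d\tau\leq0$, hence $\phi'(x_1)\geq0$. The case $k=0$ is immediate, since then $\phi'=-g$ and $\phi(\infty)=0$. Thus $\phi$ can only cross zero upward, giving the required pattern; this is exactly the sign-variation-diminishing property of the totally positive kernel $(\tau-x)_+^k$.

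Applying this lemma to either representation shows that $\TYs{s}(x)-\TXs{s}(ax)$ inherits the ``at most one change, $-,+$'' structure from $H_s$ (or from $H_{s-1}$), and Theorem~\ref{convexity-equivalence}(2) then yields $F_X\leq_{\sIFRA{s}}F_Y$. Finally, passing from $H_s,H_{s-1}$ to $P_s,P_{s-1}$ is free of charge: since $H_s(x)=\frac{a^{s}}{\dE X^{s-1}}f_X(ax)\bigl(e^{P_s(x)}-1\bigr)$ and the prefactor is positive, $H_s$ and $P_s$ share the same sign at every point, hence the same sign-change pattern (and likewise for $H_{s-1}$ and $P_{s-1}$), so the hypotheses are interchangeable. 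The whole scheme is the $b=0$, single-sign-change adaptation of the two-sign-change ``$+,-,+$'' argument used for the $\sIFR{s}$ order in Theorem~\ref{AOmain}.
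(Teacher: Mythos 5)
Your argument is correct, and since the paper gives no proof of this statement (it merely says the argument of Theorem~\ref{AOmain} from Arab and Oliveira~\cite{AO18} can be reproduced), it is worth comparing routes. The skeleton is the same in both cases: reduce via Theorem~\ref{convexity-equivalence}(2) and Remark~\ref{AOalpha} to showing that $\TYs{s}(x)-\TXs{s}(ax)$ has sign variation at most ``$-,+$'' for every $a>0$, and then transfer the assumed sign pattern of $H_s$ (or $H_{s-1}$) through an integral representation; your identities $\TYs{s}(x)-\TXs{s}(ax)=\int_x^\infty H_s(\tau)(\tau-x)^{s-1}\,d\tau$ and $(s-1)\int_x^\infty H_{s-1}(\tau)(\tau-x)^{s-2}\,d\tau$ are correct, normalizations included, by (\ref{eq:simpleTs}) and one integration by parts. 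Where you genuinely diverge is in the transfer step: the paper's intended tool is Lemma~\ref{sign-integral}, applied $s$ (resp.\ $s-1$) times to the nested single integrals, whereas you prove a one-shot variation-diminishing property of the kernel $(\tau-x)_+^{k}$, showing $\phi'(x_1)\geq 0$ at any zero $x_1$ of $\phi$ below the crossing point $\tau_0$ of $g$ by comparing the weights $(\tau-x_1)^{k-1}$ and $(\tau-x_1)^{k}/(\tau_0-x_1)$ on either side of $\tau_0$. That computation is right, is self-contained, and buys you independence from the external Lemma~\ref{sign-integral}; the iterated-integration route buys uniform treatment of the longer patterns ``$+,-,+$'' and ``$-,+,-,+$'' needed for Theorem~\ref{AOmain}, which your single-crossing lemma does not cover. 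Two points deserve one extra sentence each: (i) ``$\phi'\geq 0$ at every zero'' excludes strict downward crossings but not a degenerate touch with $\phi'(x_1)=0$ followed by a descent; your inequalities are strict unless $g$ vanishes a.e.\ on $(x_1,\infty)$, in which case $\phi\equiv 0$ there, so the gap closes immediately; (ii) the identity ${\rm sgn}(H_s)={\rm sgn}(P_s)$ holds only where $f_X(ax)>0$, which is where $P_s$ is defined anyway. Neither affects the validity of the proof.
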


As indicated in Theorem~\ref{convexity-equivalence}, the control of the sign variation of $\TYs{s}(x)-\TXs{s}(ax+b)$ is crucial to characterize the $\sIFR{s}$ and $\sIFRA{s}$ ordering. This function is obtained after integration of $H_s$ or $H_{s-1}$ defined in Theorems~\ref{AOmain} and \ref{AOmain1}. Below, we quote a result about sign variation after integration, used in Arab and Oliveira~\cite{AO18} for the proof of Theorem~\ref{AOmain}, that is an important tool for our results below.
\begin{lem}[Lemma~26 in Arab and Oliveira~\cite{AO18}]
	\label{sign-integral}
	Let $f$ and $g$ be two real-valued functions defined on $[0,\infty)$ such that,
	$$
	g(x)=\int_{x}^\infty f(t)\,dt.
	$$
	Assume that, as $x$ traverses from $0$ to $+\infty$, $f(x)$ changes sign in one of the following orders ``$-,+$'' or ``$+,-$'' or ``$+,-,+$'' or ``$-,+,-,+$''. Then $g(x)$, as $x$ traverses from $0$ to $\infty$, has sign variation equal to every possible final part of the sign variation of $f(x)$.
\end{lem}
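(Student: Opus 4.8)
The plan is to reduce everything to two facts about $g$: that it is piecewise strictly monotone, and that its sign near $+\infty$ is pinned by that of $f$. Wherever $f$ is continuous one has $g'(x)=-f(x)$, so I would first record the sign changes of $f$ as $0<x_1<\cdots<x_k$, put $x_0=0$ and $x_{k+1}=+\infty$, and let $\epsilon_j\in\{+,-\}$ denote the constant sign of $f$ on each interval $I_j=(x_j,x_{j+1})$, with $\epsilon_{j-1}\neq\epsilon_j$ (the four admissible words correspond to $k\in\{1,2,3\}$). On $I_j$ the derivative $g'=-f$ keeps the single sign $-\epsilon_j$, so $g$ is strictly monotone on $I_j$ and therefore changes sign at most once inside $I_j$. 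Moreover, since $g'=-f$ genuinely changes sign at each $x_j$, every $x_j$ is a strict local extremum of $g$, so $g$ cannot change sign \emph{at} a junction; all its sign changes are interior to some $I_j$.

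Next I would analyse the unbounded piece $I_k=(x_k,+\infty)$. There $f$ keeps the single sign $\epsilon_k$, so $g(x)=\int_x^\infty f(t)\,dt$ has that same sign $\epsilon_k$ and never vanishes on $I_k$. Two consequences follow at once: the \emph{rightmost} sign of $g$ equals the rightmost sign $\epsilon_k$ of $f$, and the last interval contributes no sign change. Combined with the previous paragraph, $g$ undergoes at most $k$ sign changes in total.

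The conclusion is then purely combinatorial. By definition a sign-variation word records successive \emph{distinct} signs, hence is automatically alternating; in particular $f$'s word $\epsilon_0,\epsilon_1,\dots,\epsilon_k$ is the unique alternating word of length $k+1$ ending in $\epsilon_k$, and its final parts are exactly the alternating words of length between $1$ and $k+1$ that end in $\epsilon_k$. Since the word of $g$ is alternating, ends in $\epsilon_k$ by the second paragraph, and has length at most $k+1$ by the sign-change count, it must coincide with one of these final parts. This delivers the prescribed orders for free: when $f$ is ``$+,-,+$'', the word of $g$ is an alternating word ending in ``$+$'' of length at most three, i.e.\ one of ``$+$'', ``$-,+$'', ``$+,-,+$'', so if two changes occur they are necessarily in the order ``$+,-,+$''. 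The converse, that each admissible final part is actually attained, I would settle by exhibiting the freedom in the left-hand values $g(0),\dots,g(x_{k-1})$, which can be driven across zero or kept away from it.

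I expect the genuine obstacle to lie not in the combinatorics but in the degenerate configurations that the clean picture glosses over: $g$ may vanish exactly at a junction $x_j$, or may touch zero at one of its extrema without crossing. In each such case the relevant count can only drop and no forbidden order can appear, but making this airtight requires a careful convention for counting sign changes at and around the $x_j$. The remaining technical point is the convergence of $\int_x^\infty f$, which is what legitimises both $g(+\infty)=0$ and the sign computation on $I_k$; this is automatic in the intended applications, where $g$ is a difference of tails of iterated distributions.
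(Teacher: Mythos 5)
The paper itself offers no proof of this lemma --- it is quoted verbatim from Lemma~26 of Arab and Oliveira (2018) as an external tool --- so there is no in-paper argument to compare against, but your proof is correct and follows essentially the same route as the cited source: piecewise strict monotonicity of $g$ on the sign-constancy intervals of $f$ (via $g'=-f$, or directly from $g(x)-g(y)=\int_x^y f$), the terminal sign of $g$ pinned to the last sign of $f$ by the tail integral, and the combinatorial identification of alternating words of bounded length ending in that sign with the suffixes of $f$'s sign word. Your closing caveats (degenerate vanishing at junctions, convergence of the tail integral) are exactly the points that need the routine care you describe, and none of them threatens the argument.
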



The lifetime of parallel systems is expressed as the maximum of the lifetimes of each component. When these component have exponentially distributed lifetimes, the distribution functions of the system's lifetime is expressed as a linear combinations of exponential terms. Later, it will be important to be able to count and localize the roots of such expressions. The following result will play an important role on this aspect.
\begin{thm}[Theorem 1 in Shestopaloff~\cite{She11}]
\label{thm:zeros}
Let $n\geq 0$, $p_0>p_1>\cdots>p_n>0$, and $\alpha_j\ne0$, $j=0,1,\ldots,n$, be real numbers. Then the function $f(t)=\sum_{j=0}^n\alpha_jp^t_j$ has no real zeros if $n=0$, and for $n\geq 1$ has at most as many real zeros as there are sign changes in the sequence of coefficients $\alpha_0,\alpha_1,\alpha_2,\ldots,\alpha_n$.
\end{thm}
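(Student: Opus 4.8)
The statement is a version of Descartes' rule of signs for exponential sums, so the plan is to prove it by induction on $n$, using Rolle's theorem to reduce the number of terms at each step. Writing $p_j^t=e^{t\ln p_j}$ and noting that $p_0>p_1>\cdots>p_n>0$ is equivalent to $\ln p_0>\ln p_1>\cdots>\ln p_n$, the function $f$ is a real exponential polynomial. I denote by $V(\alpha_0,\ldots,\alpha_n)$ the number of sign changes in the coefficient sequence and by $Z(f)$ the number of real zeros counted with multiplicity. The base case $n=0$ is immediate: $f(t)=\alpha_0p_0^t$ never vanishes since $\alpha_0\ne0$ and $p_0^t>0$, and the single-term sequence has no sign changes.

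For the inductive step I would first normalize. Since $p_n^t>0$ for all $t$, dividing by it does not change the zeros, so I set $h(t)=p_n^{-t}f(t)=\alpha_n+\sum_{j=0}^{n-1}\alpha_j(p_j/p_n)^t$, where every ratio satisfies $p_j/p_n>1$. Differentiating removes the constant term and lowers the number of exponential terms by one,
\[
h'(t)=\sum_{j=0}^{n-1}\alpha_j\ln(p_j/p_n)\,(p_j/p_n)^t .
\]
Because $\ln(p_j/p_n)>0$ for $0\le j\le n-1$, each new coefficient $\alpha_j\ln(p_j/p_n)$ has the same sign as $\alpha_j$, so $h'$ is again a function of the type covered by the theorem and its coefficient sequence has exactly $V(\alpha_0,\ldots,\alpha_{n-1})$ sign changes. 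The induction hypothesis then gives $Z(h')\le V(\alpha_0,\ldots,\alpha_{n-1})$, and Rolle's theorem yields $Z(h)\le Z(h')+1$ (counting multiplicities: a zero of $h$ of order $m$ is a zero of $h'$ of order $m-1$, and consecutive distinct zeros of $h$ are separated by a zero of $h'$).

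It remains to compare $V(\alpha_0,\ldots,\alpha_{n-1})$ with $V=V(\alpha_0,\ldots,\alpha_n)$. If $\alpha_{n-1}$ and $\alpha_n$ have opposite signs then $V=V(\alpha_0,\ldots,\alpha_{n-1})+1$ and the bound $Z(h)\le V(\alpha_0,\ldots,\alpha_{n-1})+1=V$ finishes this case. The delicate case, and the main obstacle, is when $\alpha_{n-1}$ and $\alpha_n$ share a sign, for then $V=V(\alpha_0,\ldots,\alpha_{n-1})$ and the naive estimate is one too large. To remove the extra unit I would exploit the behaviour of $h$ as $t\to-\infty$: since $p_n$ is the smallest base, $h(t)\to\alpha_n$ and $h(t)-\alpha_n\sim\alpha_{n-1}(p_{n-1}/p_n)^t$, whose sign is that of $\alpha_{n-1}=\operatorname{sign}(\alpha_n)$, so $h$ keeps the sign of $\alpha_n$ near $-\infty$, while the same dominant term gives $h'(t)\sim\alpha_{n-1}\ln(p_{n-1}/p_n)\,(p_{n-1}/p_n)^t$, which also carries the sign of $\alpha_n$ there. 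At the leftmost zero $t_1$ of $h$ the function reaches $0$ from the $\alpha_n$-side, so just to the left of $t_1$ the derivative $h'$ must have the sign opposite to that of $\alpha_n$; comparing with its sign near $-\infty$, $h'$ is forced to vanish somewhere in $(-\infty,t_1)$, strictly to the left of every zero of $h$. This extra critical point is not among the Rolle zeros between consecutive zeros of $h$, so it upgrades the estimate to $Z(h)\le Z(h')\le V(\alpha_0,\ldots,\alpha_{n-1})=V$, closing the induction. The crux of the whole argument is exactly this endpoint analysis: plain Rolle only delivers $Z(h)\le V+1$, and recovering the sharp bound in the equal-sign case is where the strict ordering of the bases and the sign of the dominant tail at $-\infty$ must be used.
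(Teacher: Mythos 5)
The paper does not prove this statement at all: it is imported verbatim as Theorem~1 of Shestopaloff~\cite{She11}, so there is no internal proof to compare against. Your argument is the classical induction-plus-Rolle proof of the generalized Descartes rule of signs for exponential sums, and it is correct: the normalization by $p_n^{-t}$, the sign-preserving differentiation, the Rolle count with multiplicities, and the endpoint analysis at $-\infty$ that absorbs the extra unit in the equal-sign case are all sound, and you correctly identify that last step as the crux. Two small points deserve tightening. First, the phrase ``just to the left of $t_1$ the derivative $h'$ must have the sign opposite to that of $\alpha_n$'' is not literally true pointwise ($h$ may approach its leftmost zero non-monotonically while staying on one side); what you actually need is the mean value theorem on $[t,t_1]$, which produces \emph{some} $\xi<t_1$ with $\operatorname{sign}(h'(\xi))=-\operatorname{sign}(\alpha_n)$, after which the intermediate value theorem, combined with the sign of $h'$ near $-\infty$, yields the extra zero of $h'$ in $(-\infty,t_1)$. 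Second, to speak of ``the leftmost zero'' you should first note that $h$ has only finitely many zeros; this follows from the Rolle step itself (consecutive distinct zeros of $h$ are separated by zeros of $h'$, and $Z(h')$ is finite by the induction hypothesis), so it is worth stating in that order rather than assuming $t_1$ exists outright. Neither point is a gap in substance.
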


\section{Hereditary monotonicity properties}
A common feature about iterated monotonicity properties is an hereditary with respect to the iteration parameter. However, the hereditary property does not hold for all the order relations defined, as we will be showing below by an example. We quote first the hereditary property for monotonicity of the iterated failure rate.
\begin{lem}
\label{IFR}
Let $X$ be a nonnegative random variable. For every integer $s\geq 1$, the following relations hold.
\begin{itemize}
  \item[a)] If $X$ is \sIFR{s}, then $X$ is \sIFR{(s+1)}.
  \item[b)] If $X$ is \sDFR{s}, then $X$ is \sDFR{(s+1)}.
\end{itemize}
\end{lem}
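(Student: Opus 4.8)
The plan is to recognise that the $s$-iterated failure rate $r_{X,s}$ is simply the ordinary failure rate of the distribution whose tail is $\TXs{s}$, so that the whole statement reduces to the classical fact that monotonicity of an ordinary failure rate is passed on to the associated equilibrium distribution. Concretely, differentiating the defining relation for $\TXs{s}$ gives $\frac{d}{dx}\TXs{s}(x)=-\frac{1}{\muXs{s-1}}\TXs{s-1}(x)$, so that
$$
r_{X,s}(x)=\frac{\TXs{s-1}(x)}{\muXs{s-1}\TXs{s}(x)}=-\frac{d}{dx}\log\TXs{s}(x).
$$
Setting $G:=\TXs{s}$ and $r:=r_{X,s}$, the quantity governing the next level is $r_{X,s+1}(x)=\frac{\TXs{s}(x)}{\int_x^\infty\TXs{s}(t)\,dt}=1/m(x)$, where $m(x)=\frac{1}{G(x)}\int_x^\infty G(t)\,dt$ is the mean residual life of $G$. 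Hence $X$ being $\sIFR{(s+1)}$ is equivalent to $m$ being decreasing, and $X$ being $\sDFR{(s+1)}$ to $m$ being increasing, and the lemma amounts to showing that monotonicity of $r$ forces the opposite monotonicity of $m$.

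First I would integrate $r=-(\log G)'$ to obtain, wherever $G>0$, the representation $G(x+w)/G(x)=\exp\bigl(-\int_x^{x+w}r(u)\,du\bigr)$, and then rewrite the mean residual life, after the substitution $t=x+w$, as
$$
m(x)=\int_0^\infty\frac{G(x+w)}{G(x)}\,dw=\int_0^\infty\exp\Bigl(-\int_0^w r(x+v)\,dv\Bigr)\,dw.
$$
The key observation is that, for each fixed $w>0$, the inner integral $\int_0^w r(x+v)\,dv$ has the same monotonicity in $x$ as $r$ itself: if $r$ is increasing then $x\mapsto\int_0^w r(x+v)\,dv$ is nondecreasing, the integrand above is therefore nonincreasing in $x$, and so $m$ is decreasing; consequently $r_{X,s+1}=1/m$ is increasing, which is part a). Part b) follows by reversing every inequality: if $r$ is decreasing the inner integral decreases in $x$, the integrand increases, $m$ increases, and $r_{X,s+1}$ decreases.

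The mathematics of the comparison is immediate once the reduction is in place, so the only genuinely delicate point is bookkeeping rather than estimation: one must use the logarithmic-derivative identity and the exponential formula for $G(x+w)/G(x)$ only on the support of $\TXs{s}$, and check that $m(x)$ is finite, i.e. that the moment $\dE X^{s}$ exists, which is exactly the condition ensuring that $\TXs{s+1}$ is a genuine tail. With these provisos the monotone-integrand argument needs no further work, and in particular no sign-variation arguments are required.
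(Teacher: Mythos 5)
Your proof is correct. Note, however, that the paper does not actually prove this lemma: it simply cites Theorem~2 of Navarro and Hernandez (2004), so your argument is not so much a different route as the only route on display. What you do is reduce the iterated statement to the classical implication ``IFR $\Rightarrow$ DMRL'' applied to the distribution with tail $G=\TXs{s}$: since $r_{X,s}=-(\log\TXs{s})'$ and $r_{X,s+1}=1/m$ with $m$ the mean residual life of $G$, the representation $m(x)=\int_0^\infty\exp\bigl(-\int_0^w r_{X,s}(x+v)\,dv\bigr)\,dw$ makes the monotonicity transfer immediate. This is exactly the Bryson--Siddiqui argument (a reference the paper itself cites in its introduction for IFR $\Rightarrow$ DMRL), and it is self-contained, elementary, and arguably more informative than the citation the authors give. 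Your two provisos are the right ones: the exponential identity for $G(x+w)/G(x)$ is only valid on the support of $\TXs{s}$ (off the support the ratio is $0$, which is consistent with the claimed monotonicity since $\TXs{s}$ is positive on an interval $[0,\operatorname{ess\,sup}X)$ and vanishes beyond), and finiteness of $m$ is guaranteed by $\dE X^{s}<\infty$, which is implicitly assumed whenever $\TXs{s+1}$, and hence the $\sIFR{(s+1)}$ property, is defined. No sign-variation machinery is needed, and none is missing.
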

This result is included in Theorem~2 in Navarro and Hernandez~\cite{NH04}. It implies that, for most distributions, it is enough to verify the \sIFR{1} or the \sDFR{1} property.
Exhibiting distributions that do not have lower iterated monotonicity but verify it after a few iteration steps, usually requiring a suitable modification of known families of distributions. 
Such an example, using fattened tail Pareto distributions, was given in Example~9 in Arab and Oliveira~\cite{AO18}. 
This hints a way to construct distributions with failure rates that become monotone only after a few iteration steps. The example below shows the same effect for a distribution that is IFR, instead. 
\begin{ex}
\label{ex:s-s+1IFR}
Let $X$ be a nonnegative random variable with density function $f(x)= \frac{(x^2+c)e^{-x}}{c+2}$. It is easily verified that
\begin{equation*}
\TXs{1}(x)=\frac{(x^2+2x+2+c)e^{-x}}{c+2}, \qquad \TXs{2}(x)=\frac{(x^2+4x+6+c)e^{-x}}{6+c},
\end{equation*}
and
\begin{equation*}
  r_{X,1}(x)=\frac{f(x)}{\TXs{1}(x)}=\frac{x^2+c}{x^2+2x+2+c}, \qquad r_{X,2}(x)=\frac{6+c}{2+c}\!\times\!\frac{x^2+2x+2+c}{x^2+4x+6+c}.
\end{equation*}
Differentiating, we find
\begin{equation*}
  r_{X,1}^{\prime}(x)=\frac{2x^2+4x-2c}{\left(x^2+2x+2+c\right)^2}, \qquad r_{X,2}^{\prime}(x)=\frac{6+c}{2+c}\!\times\!\frac{2x^2+8x+4-2c}{\left(x^2+4x+6+c\right)^2}.
\end{equation*}
By choosing $c\in(0,2)$ we obtain $r_{X,1}$ that starts decreasing and eventually becomes increasing, while $r_{X,2}$ is increasing. That is, $X$ is not $\sIFR{1}$ but is $\sIFR{2}$. Moreover, it is also easy to verify that $X$ is not $\sIFRA{1}$.
\end{ex}
Let us now look at the hereditary property concerning the $\sIFRA{s}$ monotonicity, to show that the situation is quite different from what happens with the $\sIFR{s}$ monotonicity.

\begin{prop}
\label{prop:ex1}
Let $Y_1$ and $Y_2$ be independent exponential random variables with mean 1 and $1/\lambda$, respectively, where $\lambda\neq 1$, and define $Y=\max(Y_1,Y_2)$. Then $Y$ is $\sIFRA{1}$, but it is neither $\sIFRA{2}$ nor $\sDFRA{2}$. Moreover, there exists $s_0>2$ such that $Y$ is $\sDFR{s}$ for every $s\geq s_0$.
\end{prop}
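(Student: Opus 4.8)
The plan is to work throughout with the explicit tail $\TYs{1}(x)=\overline{F}_Y(x)=e^{-x}+e^{-\lambda x}-e^{-(1+\lambda)x}$ coming from $F_Y=F_{Y_1}F_{Y_2}$, together with the density $f_Y(x)=e^{-x}+\lambda e^{-\lambda x}-(1+\lambda)e^{-(1+\lambda)x}$. For the first assertion I would simply invoke the classical closure of the IFRA class under the formation of coherent systems of independent components (Barlow and Proschan~\cite{BP-81}): an exponential law has constant failure rate, hence is \sIFRA{1}, and a parallel system is coherent, so $Y=\max(Y_1,Y_2)$ is \sIFRA{1}. Alternatively, one can verify this directly via Theorem~\ref{AOmain1} with $b=0$, comparing $Y$ with an exponential: the difference $e^{-\mu x}-\overline{F}_Y(ax)$ is a four–term exponential sum whose single admissible sign change ``$-,+$'' can be controlled by counting its zeros with Theorem~\ref{thm:zeros}.

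For the second assertion the key observation is that, writing $\psi(x)=-\log\TYs{2}(x)=\int_0^x r_{Y,2}(t)\,dt$, the \sIFRA{2} (resp.\ \sDFRA{2}) property is exactly the increase (resp.\ decrease) of $h(x)=\psi(x)/x$. Since $h'(x)=q(x)/x^2$ with $q(x):=x\,r_{Y,2}(x)+\log\TYs{2}(x)$, and $q(0)=0$, $q'(x)=x\,r_{Y,2}'(x)$, it suffices to show that $q$ takes both signs. Near the origin $q(x)\sim\tfrac12\,r_{Y,2}'(0)\,x^2$, and $r_{Y,2}'(0)>0$ because $r_{Y,2}=1/m$ with $m$ the mean residual life, while the standard identity $m'=m\,r_{Y,1}-1$ gives $m'(0)=-1<0$ (indeed $r_{Y,1}(0)=f_Y(0)/\overline{F}_Y(0)=0$); hence $q>0$ on a right neighbourhood of $0$. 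On the other hand, from the explicit $\TYs{2}(x)=\tfrac{1}{\dE Y}\bigl(e^{-x}+\tfrac1\lambda e^{-\lambda x}-\tfrac1{1+\lambda}e^{-(1+\lambda)x}\bigr)$ one reads off the dominant decay rate $\kappa=\min(1,\lambda)$ and leading coefficient $L=1/(\kappa\,\dE Y)$, so that (using exponential convergence $x(r_{Y,2}(x)-\kappa)\to0$) one gets $q(x)\to\log L=-\log(\kappa\,\dE Y)$. Since $Y$ dominates the component of rate $\kappa$, whose mean is $1/\kappa$, we have $\dE Y>1/\kappa$, i.e.\ $\kappa\,\dE Y=\dE(\kappa Y)>1$, so this limit is strictly negative. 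Thus $q$ is positive near $0$ and negative near $+\infty$, $h$ is non-monotone, and $Y$ is neither \sIFRA{2} nor \sDFRA{2}.

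For the third assertion I would use the closed form \eqref{eq:simpleTs}: integrating $\int_x^\infty e^{-at}(t-x)^{s-1}\,dt=(s-1)!\,a^{-s}e^{-ax}$ term by term shows that, up to the positive constant $(s-1)!/\dE Y^{\,s-1}$, one has $\TYs{s}(x)\propto\phi_s(x):=e^{-x}+\lambda^{-(s-1)}e^{-\lambda x}-(1+\lambda)^{-(s-1)}e^{-(1+\lambda)x}$. Since $r_{Y,s}=-(\log\TYs{s})'=-(\log\phi_s)'$, the \sDFR{s} property is equivalent to log-convexity of $\phi_s$, i.e.\ to $\phi_s''\phi_s-(\phi_s')^2\ge0$. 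Expanding this Wronskian-type expression as $\sum_{i<j}\alpha_i\alpha_j(\beta_i-\beta_j)^2e^{-(\beta_i+\beta_j)x}$ over the three exponents $1,\lambda,1+\lambda$ and factoring out $e^{-(1+\lambda)x}$ reduces the inequality to nonnegativity of $g(x)=\lambda^{-(s-1)}(1-\lambda)^2-(1+\lambda)^{-(s-1)}\lambda^2e^{-x}-\lambda^{-(s-1)}(1+\lambda)^{-(s-1)}e^{-\lambda x}$; as $g$ is increasing, this holds on $[0,\infty)$ iff $g(0)\ge0$, that is iff $(1-\lambda)^2\ge\bigl(\tfrac{\lambda}{1+\lambda}\bigr)^{s-1}\lambda^2+\bigl(\tfrac1{1+\lambda}\bigr)^{s-1}$. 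The right-hand side decreases monotonically to $0$ while the left-hand side is the positive constant $(1-\lambda)^2$, so the inequality holds for all $s$ beyond some $s_0$; moreover $s_0>2$ because part (ii) already excludes \sDFR{2} (DFR implies DFRA).

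The main obstacle is the second assertion. The endpoint values $h(0^+)=1/\dE Y$ and $h(+\infty)=\kappa$ satisfy $h(0^+)<h(+\infty)$, which is perfectly consistent with $h$ being increasing; hence non-monotonicity cannot be detected from the boundary alone and genuinely requires the auxiliary function $q$ together with the precise sign of its limit at infinity — equivalently, the elementary but essential fact that $\dE(\kappa Y)>1$. By contrast, parts (i) and (iii) are routine once the closed forms are in place, the only care being the term-by-term integration and the sign bookkeeping in the Wronskian reduction.
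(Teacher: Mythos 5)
Your proposal is correct, and on the first two assertions it takes a genuinely different route from the paper. For the \sIFRA{1} claim the paper argues directly: via Theorem~\ref{convexity-equivalence} it reduces the problem to the sign variation of $H(x)=e^{-x}+e^{-\lambda x}-e^{-(1+\lambda)x}-e^{-ax}$ for $0<a<1$ and counts roots with Theorem~\ref{thm:zeros}; your appeal to the classical closure of the IFRA class under coherent systems is a legitimate shortcut, and your sketched alternative is essentially the paper's computation. The real divergence is in the second assertion. The paper works with $t_2(x)=-\log(\TYs{2}(x))/x$, computes its limits at $0$ and $+\infty$, and locates an interior point where $t_2$ already equals its limit at infinity by counting the roots of an auxiliary exponential polynomial, concluding by a level-crossing argument. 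You instead differentiate $h=t_2$ and read off the sign of $q(x)=x\,r_{Y,2}(x)+\log\TYs{2}(x)$ at the two ends, using $r_{Y,2}=1/m$ and the mean-residual-life identity at the origin (so $r_{Y,2}^{\prime}(0)=1/(\dE Y)^2>0$ because $f_Y(0)=0$) and the asymptotics $q(+\infty)=-\log(\kappa\,\dE Y)<0$, with $\kappa\,\dE Y>1$ because $Y$ dominates its slower component. This avoids Theorem~\ref{thm:zeros} entirely, treats $\lambda>1$ and $\lambda<1$ uniformly (the paper's value $t_2(+\infty)=1$ corresponds to the case $\lambda>1$), and correctly identifies why the boundary values of $t_2$ alone cannot settle the question. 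For the third assertion the two arguments are the same computation in different clothing: your Wronskian reduction gives $\phi_s^{\prime\prime}\phi_s-(\phi_s^{\prime})^2=e^{-(1+\lambda)x}g(x)$, and the paper's $Q$ is exactly $-e^{-(1+\lambda)x}g(x)$; you replace the paper's root count for $Q$ by the observation that $g$ is increasing, which even yields the sharper conclusion that $Y$ is \sDFR{s} precisely when $g(0)\geq 0$, i.e.\ when $(1-\lambda)^2\geq\bigl(\tfrac{\lambda}{1+\lambda}\bigr)^{s-1}\lambda^2+\bigl(\tfrac{1}{1+\lambda}\bigr)^{s-1}$. All steps check out.
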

\begin{proof}
Remark first that
$$
\TYs{s}(x)=\frac{1}{c(s,\lambda)}\left(e^{-x}+\frac{e^{-\lambda x}}{\lambda^{s-1}}-\frac{e^{-(\lambda+1) x}}{(\lambda+1)^{s-1}}\right),
$$
where $c(s,\lambda)=1+\frac{1}{\lambda^{s-1}}-\frac{1}{(\lambda+1)^{s-1}}$.
To prove that $Y$ is $\sIFRA{1}$, we need to verify that $-t_1(x)=\frac{\log(\TYs{1}(x))}{x}$ is decreasing. Taking into account Theorem~\ref{convexity-equivalence}, we need to prove that $H(x)=e^{-x}+e^{-\lambda x}-e^{-(1+\lambda) x}-e^{-ax}$ changes sign at most once in the order ``$+,-$'' for every $a>0$ (note that we are here interested in proving the function is decreasing, while Theorem~\ref{convexity-equivalence} characterized increasingness). Moreover, remark that, for every $x\geq 0$, $\TYs{1}(x)\geq e^{-x}$, so it is enough to consider $a<1$.
%
%
%
%
Hence, the sign pattern of the coefficients means that, according to Theorem~\ref{thm:zeros}, $H$ has at most two real roots. Moreover, we have that $\lim_{x\rightarrow+\infty}H(x)=0^-$, $H(0)=0$, and $H^\prime(0)=a>0$, so it follows that the second root does exist and is positive, consequently the sign variation of $H(x)$ is ``$+,-$''. Therefore, we have proved that $Y$ is $\sIFRA{1}$.

To prove the second statement, we verify that $t_2(x)=\frac{-\log(\TYs{2}(x))}{x}$ is not monotone. Indeed, we have
$$\lim_{x\rightarrow0}t_2(x)=\frac{1}{1+\frac{1}{\lambda}-\frac{1}{1+\lambda}}<1,
\qquad\mbox{and}\qquad
\lim_{x\rightarrow+\infty}t_2(x)=1.
$$
We verify now that the equation $t_2(x)=1$ has one positive solution. Rewrite this as
$$
t_2(x)=1\quad\Leftrightarrow\quad P(x)=\left(\frac{1}{\lambda+1}-\frac{1}{\lambda}\right)e^{-x}+\frac{e^{-\lambda x}}{\lambda}-\frac{e^{-(1+\lambda)x}}{1+\lambda}=0.
$$
Again from Theorem~\ref{thm:zeros}, $P$ has at most two real roots. As $P(0)=0$, $P^\prime(0)>\frac{1}{\lambda}-\frac{1}{\lambda+1}>0$ and $\lim_{x\rightarrow+\infty} P(x)=0^-$, there exists a strictly positive second root.
%

For the final statement, we want to prove the monotonicity of
$$
r_{Y,s}(x)=\frac{\TYs{s-1}(x)}{\widetilde{\mu}_{Y,s-1}\TYs{s}(x)}
$$
which coincides with the monotonicity of
$$
N(x)=
\frac{e^{-x}+\frac{e^{-\lambda x}}{\lambda^{s-2}}-\frac{e^{-(\lambda+1) x}}{(\lambda+1)^{s-2}}}{e^{-x}+\frac{e^{-\lambda x}}{\lambda^{s-1}}-\frac{e^{-(\lambda+1) x}}{(\lambda+1)^{s-1}}}.
$$
We look at the numerator of $N^\prime(x)$, which after some algebraic manipulation, may be written as
$$
Q(x)=
-\frac{(\lambda-1)^2}{\lambda^{s-1}}e^{-(\lambda+1)x}+\frac{\lambda^2}{(\lambda+1)^{s-1}}e^{-(\lambda+2)x}
+\frac{1}{(\lambda^2+\lambda)^{s-1}}e^{-(2\lambda+1)x}
$$
Of course, the sign of $N^\prime(x)$ coincides with the sign of $Q(x)$. Notice that if $\lambda>1$, we have $\lambda+1<\lambda+2<2\lambda+1$, while the two last terms interchange when $\lambda<1$. Hence, it follows from Theorem~\ref{thm:zeros} that $Q$ has, at most, one real root. Moreover, $\lim_{x\rightarrow+\infty}Q(x)=0^-$. Therefore, if $Q(0)>0$ the sign variation of $Q$ in $(0,+\infty)$ is ``$+,-$'', and if $Q(0)<0$ the sign variation is ``$-$''. We have that
$$
Q(0)=\frac{\lambda^{s+1}+1-(\lambda-1)^2(1+\lambda)^{s-1}}{(\lambda^2+\lambda)^{s-1}},
$$
and this, as function of $s$, will eventually become negative as the numerator has a negative coefficient for $\lambda^s$, the largest power in that expression.
\end{proof}
The following is an immediate consequence of Example~\ref{ex:s-s+1IFR} and Proposition~\ref{prop:ex1}.
\begin{cor}
The $\sIFRA{s}$ monotonicity does not have the hereditary property.
\end{cor}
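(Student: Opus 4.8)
The plan is first to make explicit what the \emph{hereditary property} means in this context. In the sense of Lemma~\ref{IFR}, it is the assertion that a monotonicity notion indexed by $s$ propagates upward under iteration; for the present notion it would read ``$X$ is $\sIFRA{s}$ implies $X$ is $\sIFRA{(s+1)}$'', for every integer $s\geq 1$. Since this is a universally quantified implication, its negation requires only a single instance where the implication fails, so the whole argument reduces to assembling the two preceding results.

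The main step is to invoke Proposition~\ref{prop:ex1}. The variable $Y=\max(Y_1,Y_2)$ constructed there is shown to be $\sIFRA{1}$ while being neither $\sIFRA{2}$ nor $\sDFRA{2}$; in particular $Y$ is $\sIFRA{1}$ but not $\sIFRA{2}$. This is precisely a breakdown of the hereditary implication at the step $s=1$, and hence already establishes that $\sIFRA{s}$ monotonicity is not hereditary. To round out the picture and to emphasise the contrast with the $\sIFR{s}$ case, where Lemma~\ref{IFR} guarantees the upward implication, I would also record that the converse fails: Example~\ref{ex:s-s+1IFR} exhibits a variable $X$ that is $\sIFR{2}$ and therefore, since $\sIFR{s}$ monotonicity implies $\sIFRA{s}$ monotonicity (as noted after Definition~\ref{def:age}), is also $\sIFRA{2}$; yet the same example verifies that $X$ is not $\sIFRA{1}$. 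Thus neither $\sIFRA{s}\Rightarrow\sIFRA{(s+1)}$ nor its converse holds in general.

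Because both Example~\ref{ex:s-s+1IFR} and Proposition~\ref{prop:ex1} have already been proved in full, I do not expect any genuine obstacle at this stage: the substantive analytic effort, namely the root counting via Theorem~\ref{thm:zeros} together with the sign analysis of the auxiliary functions $H$, $P$, and $Q$, is entirely contained in those two results. The only delicate point is logical bookkeeping, that is, correctly matching the hereditary property with the implication $\sIFRA{s}\Rightarrow\sIFRA{(s+1)}$ and its converse with $\sIFRA{(s+1)}\Rightarrow\sIFRA{s}$, and then reading off from the two sources the single failing instance $s=1$ that suffices to deny heredity.
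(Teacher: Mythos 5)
Your argument is correct and matches the paper's, which states the corollary as an immediate consequence of Example~\ref{ex:s-s+1IFR} and Proposition~\ref{prop:ex1}: the proposition supplies a variable that is $\sIFRA{1}$ but not $\sIFRA{2}$ (failure of upward heredity), and the example supplies one that is $\sIFRA{2}$ (being $\sIFR{2}$) but not $\sIFRA{1}$ (failure of the converse). Your logical bookkeeping of which result refutes which implication is exactly what the paper intends.
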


\section{Simple failure rate monotonicity properties} 
This section presents simple properties of IFR or DFR distributions that are not of hereditary nature. We first highlight som improvement on classical moment bounds that may be derived from the iterated failure rate monotonicity.
\begin{prop}
\label{prop:holder}
Let $X$ be a random variable with distribution function $F_X\in\mathcal{F}$ and density function $f_X$, and let $s>3$.
\begin{enumerate}
\item
If $X$ is $\sIFR{s}$, then
$$
\left(1-\frac{1}{s-1}\right)\dE(X-x)_+^{s-3}\dE(X-x)_+^{s-1}\leq\left(\dE(X-x)_+^{s-2}\right)^2\leq\dE(X-x)_+^{s-3}\dE(X-x)_+^{s-1}.
$$
\item
If $X$ is $\sDFR{s}$, then
$$
\left(\dE(X-x)_+^{s-2}\right)^2\leq\left(1-\frac{1}{s-1}\right)\dE(X-x)_+^{s-3}\dE(X-x)_+^{s-1}.
$$
\end{enumerate}
\end{prop}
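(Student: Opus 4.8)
The plan is to reduce everything to the survival moments $m_k(x)=\dE(X-x)_+^{k}$, in terms of which the three quantities in the statement are $m_{s-3}(x)$, $m_{s-2}(x)$ and $m_{s-1}(x)$. Since $s>3$, all three orders are positive, so differentiating under the integral sign in $m_k(x)=\int_x^\infty(t-x)^kf_X(t)\,dt$ annihilates the boundary term and yields the recursion $m_k'(x)=-k\,m_{k-1}(x)$ for $k\ge 1$. Combining this with the representation (\ref{eq:rep.mom}), namely $\TXs{k}(x)=m_{k-1}(x)/\dE X^{k-1}$, and with $\TXs{k}'(x)=-\TXs{k-1}(x)/\muXs{k-1}$ coming from Definition~\ref{def:s-iter}, I would record two facts. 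First, the iterated failure rate has the compact form
\[
r_{X,k}(x)=\frac{(k-1)\,m_{k-2}(x)}{m_{k-1}(x)}.
\]
Second, the right-hand inequality in part (1) is merely the Cauchy--Schwarz (log-convexity of moments) inequality $\bigl(m_{s-2}(x)\bigr)^2\le m_{s-3}(x)\,m_{s-1}(x)$, valid for every distribution and requiring no ageing hypothesis.

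The heart of the argument is a single reformulation. Setting
\[
R(x)=\frac{m_{s-3}(x)\,m_{s-1}(x)}{\bigl(m_{s-2}(x)\bigr)^2},
\]
the compact forms of $r_{X,s-1}$ and $r_{X,s}$ give $m_{s-3}/m_{s-2}=r_{X,s-1}/(s-2)$ and $m_{s-1}/m_{s-2}=(s-1)/r_{X,s}$, whence
\[
R(x)=\frac{s-1}{s-2}\cdot\frac{r_{X,s-1}(x)}{r_{X,s}(x)}.
\]
Because $1-\tfrac{1}{s-1}=\tfrac{s-2}{s-1}$, the remaining (left) inequality in part (1) is exactly $R(x)\le\frac{s-1}{s-2}$, while the inequality in part (2) is exactly $R(x)\ge\frac{s-1}{s-2}$. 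As $r_{X,s}(x)>0$, both therefore collapse to a pointwise comparison of consecutive iterated failure rates: part (1) to $r_{X,s-1}(x)\le r_{X,s}(x)$, and part (2) to $r_{X,s-1}(x)\ge r_{X,s}(x)$.

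It then remains to link these comparisons to the $\sIFR{s}$/$\sDFR{s}$ hypotheses. Taking the logarithmic derivative of $r_{X,s}(x)=\TXs{s-1}(x)/\bigl(\muXs{s-1}\TXs{s}(x)\bigr)$ and using $-\TXs{k}'/\TXs{k}=r_{X,k}$ produces the Riccati-type identity
\[
r_{X,s}'(x)=r_{X,s}(x)\bigl(r_{X,s}(x)-r_{X,s-1}(x)\bigr).
\]
Since $r_{X,s}(x)>0$, the sign of $r_{X,s}'$ matches the sign of $r_{X,s}-r_{X,s-1}$; hence $X$ being $\sIFR{s}$ (that is, $r_{X,s}$ increasing) is equivalent to $r_{X,s-1}(x)\le r_{X,s}(x)$ for all $x$, and $X$ being $\sDFR{s}$ to the reverse inequality. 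Inserting this into the reformulation above closes both parts, with the boundary case of bounded support being trivial since all $m_k(x)$ vanish together there.

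The boundary-term and logarithmic-derivative computations are routine; the one step carrying the real content is recognizing that the product ratio factors as $R(x)=\frac{s-1}{s-2}\,r_{X,s-1}(x)/r_{X,s}(x)$, which turns the two assertions into pointwise comparisons of $r_{X,s-1}$ and $r_{X,s}$. I expect the main obstacle to be the careful justification of $m_k'(x)=-k\,m_{k-1}(x)$ and of the vanishing boundary terms; this is precisely where the hypothesis $s>3$ enters, guaranteeing that $m_{s-3}$, $m_{s-2}$ and $m_{s-1}$ all have positive order so that the recursion and the identity apply throughout, the finiteness of the moments involved being implicit in the definition of the $s$-iterated distribution.
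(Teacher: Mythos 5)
Your proof is correct and follows essentially the same route as the paper: the unconditional upper bound in part (1) is H\"older/Cauchy--Schwarz, and the remaining two bounds come from the sign of $r_{X,s}^\prime$ translated through the moment representations (\ref{eq:moms}) and (\ref{eq:rep.mom}). Your factorization $r_{X,s}^\prime=r_{X,s}\,(r_{X,s}-r_{X,s-1})$ and the ratio $R(x)$ are just a tidier packaging of the paper's ``appropriate sign on the numerator of $r_{X,s}^\prime$'' step, not a different argument.
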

\begin{proof}
A direct application of  H\"older inequality justifies the upper bound in $\sIFR{s}$ case. Both the lower bound in the $\sIFR{s}$, and the upper bound in the $\sDFR{s}$ case follow by requiring the appropriate sign on the numerator of $r_{X,s}^\prime$ and taking into account (\ref{eq:moms}) and (\ref{eq:rep.mom}).
\end{proof}
\begin{rem}
Note that the previous result, in the case of DFR distributions, provides a bound for the $s-2$ moment of the residual life at age $x$ sharper than what is given by the H\"older inequality. For the case of IFR distributions, Proposition~\ref{prop:holder} together with H\"older inequality gives a sharp interval for the $s-2$ moment of the residual life at age $x$.
\end{rem}


We now have a look at the iterated failure rate properties of parallel systems. The lifetime of such a system is expressed mathematically as the maximum of the lifetimes of each one of the components, has already been used to provide an example about the nonhereditary of the IFRA monotonicity (see Proposition~\ref{prop:ex1}). We recall here a well known property about the monotonicity of parallel systems and derive a few simple consequences.
\begin{prop}
Let $X_1,\ldots,X_n$ be independent and identically distributed $\sIFR{1}$ random variables, with distribution function $F\in\mathcal{F}$ and density function $f$. Then $X_{(n)}=\max(X_1,\ldots,X_n)$ is $\sIFR{s}$, for every $s\geq 1$.
\end{prop}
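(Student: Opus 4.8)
The plan is to reduce everything to the ordinary failure rate and then invoke the hereditary property. By Lemma~\ref{IFR}(a), the $\sIFR{1}$ property propagates upward through the iteration parameter: once $X_{(n)}$ is shown to be $\sIFR{1}$, an immediate induction gives $\sIFR{s}$ for every $s\geq 1$. Hence it suffices to prove that the parallel system $X_{(n)}$ is $\sIFR{1}$, that is, that its ordinary failure rate $r_{X_{(n)},1}$ is increasing.

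To this end I would write the distribution and tail of the maximum as $F_{(n)}(x)=F(x)^n$ and $\overline{F}_{(n)}(x)=1-F(x)^n$, so that
$$
r_{X_{(n)},1}(x)=\frac{f_{(n)}(x)}{\overline{F}_{(n)}(x)}=\frac{n\,F(x)^{n-1}f(x)}{1-F(x)^n}.
$$
Substituting $f(x)=r_{X,1}(x)\,\overline{F}(x)$, where $r_{X,1}=f/\overline{F}$ is the increasing failure rate of a single component, and cancelling the factor $1-F(x)$ against $1-F(x)^n=(1-F(x))\bigl(1+F(x)+\cdots+F(x)^{n-1}\bigr)$, this factors as
$$
r_{X_{(n)},1}(x)=r_{X,1}(x)\cdot g(F(x)),\qquad g(u)=\frac{n\,u^{n-1}}{1+u+\cdots+u^{n-1}}.
$$

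It then remains to verify that $g$ is increasing on $(0,1)$. Dividing numerator and denominator by $u^{n-1}$ gives $g(u)=n\big/\bigl(1+u^{-1}+\cdots+u^{-(n-1)}\bigr)$, and each term $u^{-k}$ with $k\geq 1$ is decreasing in $u$, so the denominator decreases and $g$ increases. Since $F$ is nondecreasing, $g(F(\cdot))$ is increasing; as $r_{X,1}$ is increasing by hypothesis and both factors are nonnegative, their product $r_{X_{(n)},1}$ is increasing (for nonnegative increasing $a,b$ one has $a(x_1)b(x_1)\leq a(x_2)b(x_1)\leq a(x_2)b(x_2)$). This establishes the $\sIFR{1}$ property and, by the reduction above, the claim for all $s$.

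The only genuinely substantive point is the monotonicity of $g$ (equivalently, the fact that the parallel structure $F^n$ does not destroy the IFR property), which is precisely where the i.i.d.\ assumption and the explicit form of the maximum's distribution are used; the remaining manipulations are routine. I would also check the boundary behaviour at $x=0$, where $F(0)=0$ forces $g(0)=0$ for $n\geq 2$, to confirm that the monotonicity of the product holds on the whole half-line rather than merely on the interior of the support.
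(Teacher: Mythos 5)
Your proposal is correct and follows essentially the same route as the paper: reduce to the $\sIFR{1}$ case via Lemma~\ref{IFR} and then factor the failure rate of the maximum as $\frac{nF^{n-1}(x)}{1+F(x)+\cdots+F^{n-1}(x)}\cdot\frac{f(x)}{1-F(x)}$, a product of nonnegative increasing functions. The paper simply calls the conclusion immediate after writing this factorization, whereas you supply the (correct) verification that $g(u)=nu^{n-1}/(1+u+\cdots+u^{n-1})$ is increasing on $(0,1)$.
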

\begin{proof}
Taking into account Lemma~\ref{IFR}, it is enough to verify that is $X_{(n)}$ is $\sIFR{1}$. Writing $r_{X_{(n)},1}(x)=\frac{nF^{n-1}(x)}{1+F(x)+\cdots+F^{n-1}(x)}\frac{f(x)}{1-F(x)}$, the conclusion is immediate.
\end{proof}
\begin{rem}
Although the result presented above i.e. the property that parallel systems of identical $\sIFR{1}$ units are also $\sIFR{1}$ is a known result (see for example~\cite{BP-81}), we present its proof for the sake of completeness. Note that, to the best of our knowledge, this is a new approach for the proof of the particular property. An alternative proof for $n=2$ can be found in Example~A.11 in Marshall and Olkin~\cite{MO07}.
\end{rem}
An easy consequence follows if we form the parallel system with components after a few iteration steps.
\begin{cor}
Let $X$ be $\sIFR{s}$, for some $s\geq1$, random variable, with distribution function $F\in\mathcal{F}$ and density function $f$. Let $Y_{(n)} = \max(Y_1,\ldots,Y_n)$ where $Y_i$ are independent with tail function $\TXs{s}(x)$. Then $Y_{(n)}$ is $\sIFR{s}$, for every $s\geq 1$.
\end{cor}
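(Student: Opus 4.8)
The plan is to reduce this statement to the preceding proposition by showing that each auxiliary variable $Y_i$ is itself $\sIFR{1}$, after which the parallel-system closure property established just above applies directly. First I would identify the density and tail of a single $Y_i$. By construction its tail function is $\TXs{s}$, so differentiating the defining relation of Definition~\ref{def:s-iter},
$$
\TXs{s}(x)=\frac{1}{\muXs{s-1}}\int_x^\infty\TXs{s-1}(t)\,dt,
$$
yields the density $f_{Y_i}(x)=\frac{1}{\muXs{s-1}}\TXs{s-1}(x)$.

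The key step is then to compute the ordinary failure rate of $Y_i$ and to observe that it coincides with the $s$-iterated failure rate of $X$:
$$
r_{Y_i,1}(x)=\frac{f_{Y_i}(x)}{\TXs{s}(x)}=\frac{\TXs{s-1}(x)}{\muXs{s-1}\,\TXs{s}(x)}=r_{X,s}(x),
$$
the last equality being precisely Definition~\ref{def:s-fail}. Since $X$ is assumed $\sIFR{s}$, the map $r_{X,s}$ is increasing, hence so is $r_{Y_i,1}$, which means each $Y_i$ is $\sIFR{1}$.

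To finish, since $Y_1,\ldots,Y_n$ are independent and identically distributed $\sIFR{1}$ variables, the preceding proposition applies and yields that $Y_{(n)}=\max(Y_1,\ldots,Y_n)$ is $\sIFR{s}$ for every $s\geq 1$.

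I expect the argument to carry essentially no analytic difficulty; the only real obstacle is the initial recognition that the tail-weight iteration translates the $s$-iterated failure rate of $X$ into the plain failure rate of the auxiliary variables $Y_i$. Once this identification is in place, the statement is an immediate corollary of the parallel-system closure property for $\sIFR{1}$ distributions proved just above.
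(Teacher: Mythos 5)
Your argument is correct and is exactly the reduction the paper intends (the corollary is stated there without proof as an ``easy consequence'' of the preceding proposition): the observation that $r_{Y_i,1}=r_{X,s}$, hence that each $Y_i$ is $\sIFR{1}$, is precisely the point that makes the closure property for parallel systems of i.i.d.\ $\sIFR{1}$ components applicable.
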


A related result was proved in Theorem 2.2 in Abouammoh and El-Neweihi~\cite{AE-N86}, that we quote here presented in a slightly more general wording.
\begin{prop}
\label{parallel}
Let $X_1,\ldots,X_n$ be independent and identically distributed $\sIFR{s}$, for some $s\geq 2$, random variables with distribution function $F\in\mathcal{F}$. Let $Y_{(n)} = \max(Y_1,\ldots,Y_n)$ where $Y_i$ are independent with tails $\TXs{s-1}(x)$. Then $Y_{(n)}$ is $\sIFR{s}$.
\end{prop}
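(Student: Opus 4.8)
The plan is to reduce the statement to the classical closure of the decreasing mean residual life (DMRL) class under the formation of parallel systems of i.i.d.\ components --- precisely Theorem~2.2 in Abouammoh and El-Neweihi~\cite{AE-N86} --- together with the hereditary property of Lemma~\ref{IFR}. The guiding observation is that the $(s-1)$ iteration steps built into the definition of the $Y_i$ convert the $\sIFR{s}$ hypothesis on $X$ into a mere $\sIFR{2}$ (equivalently DMRL) hypothesis on the common law of the components. Thus the bulk of the argument is an identification of iterated distributions, after which a known closure theorem finishes the job.

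First I would let $Y$ denote a random variable with the common distribution of the $Y_i$, so that $\TYs{1}=\TXs{s-1}$ by construction. The key step is to compute the first two iterates of $Y$ directly from Definition~\ref{def:s-iter}: one gets $\muYs{1}=\int_0^\infty\TYs{1}(t)\,dt=\muXs{s-1}$ and then $\TYs{2}(x)=\frac{1}{\muYs{1}}\int_x^\infty\TYs{1}(t)\,dt=\TXs{s}(x)$. Feeding these identities into Definition~\ref{def:s-fail} gives
\[
r_{Y,2}(x)=\frac{\TYs{1}(x)}{\muYs{1}\,\TYs{2}(x)}=\frac{\TXs{s-1}(x)}{\muXs{s-1}\,\TXs{s}(x)}=r_{X,s}(x).
\]
Since $X$ is $\sIFR{s}$, the right-hand side is increasing, hence so is $r_{Y,2}$; that is, each component $Y_i$ is $\sIFR{2}$, i.e.\ has decreasing mean residual life.

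With the components recast as DMRL variables, the closure theorem quoted above yields that $Y_{(n)}=\max(Y_1,\ldots,Y_n)$ is again DMRL, i.e.\ $\sIFR{2}$; applying part a) of Lemma~\ref{IFR} successively then lifts this to $\sIFR{s}$, the case $s=2$ being immediate. The genuinely substantive ingredient is the DMRL closure itself, which I would use as a black box: note that the elementary factorisation employed for the $\sIFR{1}$ parallel system in the preceding Proposition has no analogue here, because the second iterate of a maximum is not a simple function of the components' second iterates, so the monotonicity of the residual life of $Y_{(n)}$ cannot be read off directly and genuinely requires the argument of~\cite{AE-N86}. Apart from that, the only point demanding care is the index bookkeeping in the first step, ensuring that $\muYs{1}=\muXs{s-1}$ and $\TYs{2}=\TXs{s}$, so that the level-$2$ failure rate of a component reproduces exactly the level-$s$ failure rate of $X$.
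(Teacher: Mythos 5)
Your proposal is correct and follows essentially the same route as the paper: the paper likewise treats the statement as Abouammoh and El-Neweihi's $s=2$ (DMRL) closure theorem combined with the hereditary property of Lemma~\ref{IFR}. Your explicit verification that $\TYs{2}=\TXs{s}$ and $r_{Y,2}=r_{X,s}$ simply spells out the index bookkeeping the paper leaves implicit.
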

The original statement by Abouammoh and El-Neweihi~\cite{AE-N86} considers only the case where $s=2$. The above version follows immediately by remembering the hereditary of the $\sIFR{s}$ monotonicity. Both results prove the iterated monotonicity of maxima based on distributions constructed after some iteration steps. The statement in Proposition~\ref{parallel} has a more straightforward practical interpretation.
\section{Nonhereditary of the $\sIFR{s}$ ordering}
\label{sec:false}
We now have a look at hereditary properties of the $\sIFR{s}$ ordering. We shall prove that, opposite to what happens with the $\sIFR{s}$ monotonicity, the ordering relation is not an hereditary property.
For the discussion, we need to recall one more stochastic order relation (see Section~4.B.2 in Shaked, Shanthikumar~\cite{SS07}).
\begin{defn}
Let $X$ and $Y$ be random variables with distribution functions $F_X,F_Y\in\mathcal{F}$. The random variable $X$ is said to be more {\rm DMRL} than Y, and we write $X\leq_{{\rm DMRL}}Y$, if $d(x)=\frac{\TYs{2}(\TYs{1}^{-1}(x))}{\TXs{2}(\TXs{1}^{-1}(x))}$ is decreasing.
\end{defn}
The following relation with failure rate order holds.
\begin{thm}[Theorem~4.B.20 in Shaked, Shanthikumar~\cite{SS07}]
Let $X$ and $Y$ be random variables with distribution functions $F_X,F_Y\in\mathcal{F}$. If $X\leq_{\sIFR{1}}Y$, then $X\leq_{{\rm DMRL}}Y$.
\end{thm}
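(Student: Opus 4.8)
The plan is to translate the \textrm{DMRL} order into a statement about a weighted average of the derivative of $c_1$, and then to exploit the convexity of $c_1$ granted by the $\sIFR{1}$ order. First I would record that, since $\muXs{1}=\dE X$, Definition~\ref{def:s-iter} gives $\TXs{2}(x)=\frac{1}{\dE X}\int_x^\infty\overline{F}_X(t)\,dt$, and similarly for $Y$. Writing the argument of $d$ as $u$ and setting $x=\TTXs{1}^{-1}(u)$ through $\overline{F}_X(x)=u=\overline{F}_Y(c_1(x))$, where $c_1(x)=\TYs{1}^{-1}(\TXs{1}(x))$, the definition of $d$ becomes
\[
d(u)=\frac{\dE X}{\dE Y}\,\frac{\int_{c_1(x)}^\infty\overline{F}_Y(\sigma)\,d\sigma}{\int_x^\infty\overline{F}_X(\tau)\,d\tau}.
\]

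Second, and this is the crux, I would perform the change of variable $\sigma=c_1(\tau)$ in the numerator. The map $c_1$ is increasing, being the composition of the two decreasing maps $\TXs{1}=\overline{F}_X$ and $\TYs{1}^{-1}$, and it satisfies $\overline{F}_Y(c_1(\tau))=\overline{F}_X(\tau)$, so that $\int_{c_1(x)}^\infty\overline{F}_Y(\sigma)\,d\sigma=\int_x^\infty\overline{F}_X(\tau)\,c_1'(\tau)\,d\tau$. Hence, up to the positive constant $\dE X/\dE Y$,
\[
d(u)=\frac{\dE X}{\dE Y}\,g(x),\qquad g(x)=\frac{\int_x^\infty\overline{F}_X(\tau)\,c_1'(\tau)\,d\tau}{\int_x^\infty\overline{F}_X(\tau)\,d\tau},
\]
so that $g(x)$ is exactly the average of $c_1'$ over $[x,\infty)$ against the weight proportional to $\overline{F}_X$. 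The whole reduction rests on recognizing the \textrm{DMRL} ratio as such a conditional average.

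Third, I would use that $X\leq_{\sIFR{1}}Y$ means $c_1$ is convex, hence $c_1'$ is nondecreasing. Differentiating the weighted average gives
\[
g'(x)=\frac{\overline{F}_X(x)}{\left(\int_x^\infty\overline{F}_X(\tau)\,d\tau\right)^{2}}\int_x^\infty\overline{F}_X(\tau)\bigl(c_1'(\tau)-c_1'(x)\bigr)\,d\tau\ge 0,
\]
since $c_1'(\tau)\ge c_1'(x)$ for $\tau\ge x$; thus $g$ is nondecreasing. Because $u=\overline{F}_X(x)$ is decreasing in $x$, the composition $d$ is then decreasing in its argument $u$, which is precisely $X\leq_{{\rm DMRL}}Y$.

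The three computational steps above are transparent in the absolutely continuous setting, so I expect the genuine work to lie in the regularity bookkeeping rather than in the identities themselves. Convexity of $c_1$ only guarantees differentiability almost everywhere with nondecreasing derivative, so I would either restrict to densities positive on the relevant supports, which makes $c_1$ a $C^1$ increasing bijection of the supports and legitimizes both the change of variable and the differentiation of $g$, and then pass to the general case by approximation, or recast the monotonicity of the weighted average in a derivative-free form valid for any nondecreasing $c_1'$. One must also check that $c_1$ carries the right endpoint of the support of $X$ onto that of $Y$, so that the integration limits match after substitution, and recall that the \textrm{DMRL} order presupposes $\dE X,\dE Y<\infty$. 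I anticipate this matching of supports and the almost-everywhere differentiability, not any single computation, to be the main obstacle.
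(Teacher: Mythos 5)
The paper does not actually prove this statement: it is quoted as Theorem~4.B.20 of Shaked and Shanthikumar~\cite{SS07}, so there is no internal proof to compare against. Your argument is correct and is essentially the standard textbook route. The key reduction — substituting $u=\overline{F}_X(x)$ so that $d(u)=\frac{\dE X}{\dE Y}\,g(x)$ with $g(x)$ the $\overline{F}_X$-weighted average of $c_1'$ over $[x,\infty)$ — is exactly right: the change of variable $\sigma=c_1(\tau)$ is legitimate because $c_1=\overline{F}_Y^{-1}\circ\overline{F}_X$ is increasing and satisfies $\overline{F}_Y(c_1(\tau))=\overline{F}_X(\tau)$, and the sign of $g'$ then follows from the monotonicity of $c_1'$ granted by convexity, with the reversal of orientation under $u=\overline{F}_X(x)$ giving the required decreasingness of $d$. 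The regularity caveats you flag are the right ones but are mild in this paper's setting ($F_X,F_Y\in\mathcal{F}$, finite means, and in all applications absolutely continuous with positive density on an interval, so $c_1$ is an increasing bijection of the supports). If you want to avoid almost-everywhere differentiability bookkeeping entirely, note that for $x<x'$ the normalized weight on $[x',\infty)$ proportional to $\overline{F}_X$ stochastically dominates the one on $[x,\infty)$, so the average of the nondecreasing function $c_1'$ (or of difference quotients of $c_1$, in a derivative-free formulation) can only increase; this replaces the differentiation of $g$ by a one-line comparison. No gap.
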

Nanda et al.~\cite{ASOK}, mention in their Remark~3.1, without proof, that the DMRL order is equivalent to the $\sIFR{2}$ order. An immediate consequence of Nanda et al.~\cite{ASOK} remark is that if $X\leq_{\sIFR{1}}Y$ then $X\leq_{\sIFR{s}}Y$ for any $s\geq 2$.
%
%
Indeed, once proved that $X\leq_{\sIFR{1}}Y$, Theorem~4.B.20 in Shaked and Shanthikumar~\cite{SS07}, implies that $X\leq_{{\rm DMRL}}Y$, hence, according to Nanda et al.~\cite{ASOK} remark, $X\leq_{\sIFR{2}}Y$. If we define now $X^\ast_2$ with tail function $\TXs{2}$, and $Y^\ast_2$ with tail function $\TYs{2}$, the previous order relation means that $X^\ast_2\leq_{\sIFR{1}}Y^\ast_2$. Therefore, iterating once again, and applying Theorem~4.B.20 from Shaked and Shanthikumar~\cite{SS07} and the mentioned remark, it follows that $X^\ast_2\leq_{\sIFR{2}}Y^\ast_2$, which is just a rewriting for $X\leq_{\sIFR{3}}Y$. Repeating the above construction, it would follow that $X\leq_{\sIFR{s}}Y$, for every $s\geq 1$. However, the equivalence mentioned in Remark~3.1 of Nanda et al.~\cite{ASOK} is, in general, not true. We can prove the stated equivalence only when one of the random variables is exponentially distributed. The construction of a counter-example for the general result requires a very careful choice of distribution functions, as described below in Proposition~\ref{prop:counter}.
\begin{prop}
Let $X$ be a random variable with distribution function $F_X\in\mathcal{F}$ and $Y$ a random variable with exponential distribution. Then $X\leq_{\sIFR{2}}Y$ if and only if $X\leq_{{\rm DMRL}}Y$.
\end{prop}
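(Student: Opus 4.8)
The plan is to show that, when $Y$ is exponential, both sides of the claimed equivalence collapse to the single intrinsic property that $X$ is $\sIFR{2}$; the equivalence then follows by transitivity. Throughout I would use the scale invariance noted after Definition~\ref{DEF S-IFR} to normalise $Y$ to the standard exponential, so that $\TYs{s}(x)=e^{-x}$ for every $s\geq 1$. This fixed-point identity (the exponential being a fixed point of the iteration procedure, as recalled before Theorem~\ref{thm:expon}) is checked in one line from Definition~\ref{def:s-iter}, since $\muYs{s}=1$ after normalisation. Granting it, the $\sIFR{2}$-order side is immediate: by Theorem~\ref{thm:expon}(1) with $s=2$, and because $Y$ is exponential, we have $X\leq_{\sIFR{2}}Y$ if and only if $X$ is $\sIFR{2}$.

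The substance is to identify the ${\rm DMRL}$ order with the same property. First I would simplify the numerator of $d$: with $\TYs{1}(x)=\TYs{2}(x)=e^{-x}$ one has $\TYs{1}^{-1}(x)=-\log x$, hence $\TYs{2}(\TYs{1}^{-1}(x))=x$, so that $d(x)=x\big/\TXs{2}(\TXs{1}^{-1}(x))$. Next I would substitute $u=\TXs{1}^{-1}(x)$, equivalently $x=\TXs{1}(u)$, obtaining $d(x)=\TXs{1}(u)\big/\TXs{2}(u)$. Recognising the right-hand side from Definition~\ref{def:s-fail}, namely $\frac{\TXs{1}(u)}{\TXs{2}(u)}=\muXs{1}\,r_{X,2}(u)$, yields the clean representation $d(x)=\muXs{1}\,r_{X,2}(\TXs{1}^{-1}(x))$.

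Finally, since $\TXs{1}$ is strictly decreasing, its inverse $\TXs{1}^{-1}$ is strictly decreasing as well, and $\muXs{1}>0$; therefore $d$ is a decreasing function of $x$ precisely when $r_{X,2}$ is an increasing function of $u$, i.e. precisely when $X$ is $\sIFR{2}$. Chaining the two reductions gives $X\leq_{\sIFR{2}}Y\Longleftrightarrow X\text{ is }\sIFR{2}\Longleftrightarrow X\leq_{{\rm DMRL}}Y$, which is the assertion. The main obstacle I anticipate is bookkeeping the direction of monotonicity through the composition with the decreasing map $\TXs{1}^{-1}$, together with the minor technical point of ensuring that $\TXs{1}$ is a genuine strictly decreasing bijection onto its range so that $\TXs{1}^{-1}$, and hence $d$, are well defined; for $F_X\in\mathcal{F}$ under the usual continuity and support assumptions this is routine but should be stated explicitly.
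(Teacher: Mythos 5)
Your proof is correct and follows essentially the same route as the paper: normalize $Y$ to the standard exponential so that $\TYs{1}=\TYs{2}=e^{-x}$, reduce $d(x)$ via the substitution $u=\TXs{1}^{-1}(x)$ to $\TXs{1}(u)/\TXs{2}(u)=\muXs{1}\,r_{X,2}(u)$, and flip the monotonicity through the decreasing map $\TXs{1}^{-1}$. The only cosmetic difference is that you invoke Theorem~\ref{thm:expon} to dispose of the $\sIFR{2}$-order side, whereas the paper obtains the same quantity $\TXs{1}(x)/\TXs{2}(x)$ by differentiating $c_2$ directly and observing $d(x)=c_2^\prime(\TXs{1}^{-1}(x))$.
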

\begin{proof}
Taking into account the comments after Definition~\ref{DEF S-IFR}, it is enough to consider the case where $Y$ has mean 1. Then we have that $\TYs{1}(x)=\TYs{2}=e^{-x}$. Therefore, $X\leq_{{\rm DMRL}}Y$ is equivalent to $d(x)=\frac{\TYs{2}(\TYs{1}^{-1}(x))}{\TXs{2}(\TXs{1}^{-1}(x))}=\frac{x}{\TXs{2}(\TXs{1}^{-1}(x))}$ being decreasing. On the other hand, $X\leq_{\sIFR{2}}Y$ is equivalent to $c_2(x)=\TYs{2}^{-1}(\TXs{2}(x))$ being convex or, alternatively, $c_2^\prime$ being increasing. Differentiating, $c_2^\prime(x)=\frac{\TXs{1}(x)}{\TYs{2}(\TYs{1}^{-1}(\TXs{2}(x)))}=\frac{\TXs{1}(x)}{\TXs{2}(x)}$. Hence, $X\leq_{\sIFR{2}}Y$ is equivalent to $c_2^\prime(\TXs{1}^{-1}(x))=d(x)$ being decreasing, which proves the equivalence.
\end{proof}
As an immediate consequence, we have the hereditary property of the $\sIFR{s}$ order with respect to exponentially distributed random variables. This proves Nanda et al.~\cite{ASOK} remark for the particular choice of the exponential as the reference distribution.
\begin{cor}
Let $X$ be a random variable with distribution function $F_X\in\mathcal{F}$ and $Y$ a random variable with exponential distribution. If, for some $s\geq 1$, $X\leq_{\sIFR{s}}Y$, then $X\leq_{\sIFR{(s+1)}}Y$.
\end{cor}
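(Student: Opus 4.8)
The plan is to resurrect the iteration argument sketched just before the preceding proposition, which failed in general, by exploiting that the exponential is a fixed point of the iteration procedure. I would first dispose of the base case $s=1$: if $X\leq_{\sIFR{1}}Y$ then the implication $X\leq_{\sIFR{1}}Y\Rightarrow X\leq_{{\rm DMRL}}Y$ gives $X\leq_{{\rm DMRL}}Y$, and since $Y$ is exponential the preceding proposition upgrades this to $X\leq_{\sIFR{2}}Y$.

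For the step from $s$ to $s+1$ I would shift the index. Introduce $X^\ast_s$ with tail $\TXs{s}$ and $Y^\ast_s$ with tail $\TYs{s}$. Reading Definition~\ref{def:s-iter}, iterating the construction on a variable whose tail is already $\TXs{s}$ merely advances the index: one verifies by induction that $\overline{T}_{X^\ast_s,k}=\TXs{s+k-1}$ for all $k\geq1$, and similarly for $Y$. Hence $X\leq_{\sIFR{s}}Y$ is, verbatim, the assertion $X^\ast_s\leq_{\sIFR{1}}Y^\ast_s$, while $X\leq_{\sIFR{(s+1)}}Y$ is $X^\ast_s\leq_{\sIFR{2}}Y^\ast_s$.

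The decisive observation is that, $Y$ being exponential, every iterate $\TYs{s}$ equals the same exponential tail, so $Y^\ast_s$ is again exponential. I can therefore apply the base case to the pair $(X^\ast_s,Y^\ast_s)$: from $X^\ast_s\leq_{\sIFR{1}}Y^\ast_s$ I obtain $X^\ast_s\leq_{\sIFR{2}}Y^\ast_s$, which is exactly $X\leq_{\sIFR{(s+1)}}Y$, completing the induction.

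The only point demanding care is the shift identity $\overline{T}_{X^\ast_s,k}=\TXs{s+k-1}$; specifically one must check that the normalizing moments agree -- e.g.\ that $\muXs{s}=\int_0^\infty\TXs{s}(t)\,dt$ is precisely the first iterated moment of $X^\ast_s$ -- so that the equality holds on the nose rather than merely up to a constant. Everything else is bookkeeping. For completeness I note that the statement also follows at once without invoking the {\rm DMRL} order: by Theorem~\ref{thm:expon}, $X\leq_{\sIFR{s}}Y$ is equivalent to $X$ being $\sIFR{s}$, which by Lemma~\ref{IFR} implies $X$ is $\sIFR{(s+1)}$, i.e.\ $X\leq_{\sIFR{(s+1)}}Y$.
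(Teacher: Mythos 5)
Your main argument is correct and is essentially the paper's intended one: the corollary is stated there as an immediate consequence of the preceding proposition, obtained exactly by the index-shift to $X^\ast_s,Y^\ast_s$, the fact that the exponential is a fixed point of the iteration, and the chain $\sIFR{1}\Rightarrow{\rm DMRL}\Rightarrow\sIFR{2}$ valid against an exponential reference. Your closing observation -- that Theorem~\ref{thm:expon} reduces the order relation to the $\sIFR{s}$ monotonicity of $X$, which Lemma~\ref{IFR} propagates to $s+1$ -- is an equally valid and even shorter route to the same conclusion.
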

However, the same hereditary does not hold when comparing general random variables with respect to the $\sIFR{s}$ ordering. That is, Remark~3.1 in Nanda et al~\cite{ASOK} is, in general, not true as it is proven in the proposition that follows.
\begin{prop}
\label{prop:counter}
Neither the $\sIFR{1}$ or the DMRL order imply the $\sIFR{2}$ order.
\end{prop}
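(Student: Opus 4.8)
The plan is to exhibit a single pair of non-exponential random variables $X$ and $Y$ for which $X\leq_{\sIFR{1}}Y$ holds but $X\leq_{\sIFR{2}}Y$ fails. This one example disproves both implications at once: by Theorem~4.B.20 in Shaked and Shanthikumar~\cite{SS07}, $X\leq_{\sIFR{1}}Y$ forces $X\leq_{\rm DMRL}Y$, so the same pair simultaneously witnesses that neither the $\sIFR{1}$ nor the DMRL order entails the $\sIFR{2}$ order. The first observation that disciplines the search is that the two obvious shortcuts are closed off. By the preceding proposition the equivalence $X\leq_{\sIFR{2}}Y\Leftrightarrow X\leq_{\rm DMRL}Y$ holds whenever $Y$ is exponential, so a counter-example cannot use an exponential reference; and if $X$ is exponential then Theorem~\ref{thm:expon} turns $X\leq_{\sIFR{1}}Y$ and $X\leq_{\sIFR{2}}Y$ into the statements that $Y$ is $\sDFR{1}$ and $\sDFR{2}$ respectively, and these are linked by the hereditary Lemma~\ref{IFR}, so again no failure can occur. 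Hence both $X$ and $Y$ must be genuinely non-exponential, which is precisely the ``very careful choice'' referred to above.

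Second, I would confine the construction to distributions whose first iterated tails are finite linear combinations of decaying exponentials---for instance maxima of independent exponentials, as already used in Proposition~\ref{prop:ex1}, or hyperexponential mixtures. This class is closed under the iteration of Definition~\ref{def:s-iter}: integrating and renormalizing a combination $\sum_j\alpha_j e^{-p_j x}$ again produces such a combination, so all of $\TXs{1}$, $\TXs{2}$, $\TYs{1}$, $\TYs{2}$ remain explicit. The payoff is that every sign-variation question reduces to counting zeros of functions of the form $\sum_j\alpha_j p_j^{\,x}$, to which Theorem~\ref{thm:zeros} applies, while Lemma~\ref{sign-integral} tracks how the sign pattern propagates through the integration step of Definition~\ref{def:s-iter}.

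Third, the two verifications. For $X\leq_{\sIFR{1}}Y$ I would use the first part of Theorem~\ref{convexity-equivalence}: show that for every $a>0$ and $b\in\mathbb{R}$ the difference $\TYs{1}(x)-\TXs{1}(ax+b)$ changes sign at most twice and, when twice, in the order ``$+,-,+$'' (equivalently, by Theorem~\ref{AOmain} at $s=1$, the density difference $f_Y(x)-af_X(ax+b)$). With tails that are exponential sums, the sign-change count is bounded by the number of sign alternations in the coefficient sequence through Theorem~\ref{thm:zeros}, and the boundary behaviour at $0$ and $+\infty$ fixes the order of the changes, exactly as in the proof of Proposition~\ref{prop:ex1}. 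For the failure $X\not\leq_{\sIFR{2}}Y$ I would instead produce one explicit pair $(a,b)$ with $a>0$ for which $\TYs{2}(x)-\TXs{2}(ax+b)$ realises a forbidden pattern (three sign changes, or a double change in the order ``$-,+,-$''), read off from Theorem~\ref{thm:zeros} applied to the second iterate; by the first part of Theorem~\ref{convexity-equivalence} this is exactly the negation of $X\leq_{\sIFR{2}}Y$.

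The main obstacle is the tension between these two requirements. The $\sIFR{1}$ condition is universal---it must hold for \emph{every} affine reparametrization $ax+b$---and is therefore the delicate half, whereas the $\sIFR{2}$ failure needs only a single bad $(a,b)$. The parameters of $X$ and $Y$ must be tuned into a narrow regime where the first-iterate coefficient sequences never admit more than the allowed two sign alternations in the correct order, yet the second iterate---whose coefficients are reweighted by the extra factors $1/p_j$ introduced by integration---does admit the forbidden pattern for some reparametrization. Balancing these competing constraints, together with confirming the boundary signs at $0$ and $+\infty$ that pin down the orders of the sign changes, is where essentially all the work lies; Theorem~\ref{thm:zeros} and Lemma~\ref{sign-integral} are what make this balancing act checkable by hand.
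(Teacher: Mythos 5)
Your reduction of both claims to a single pair with $X\leq_{\sIFR{1}}Y$ but $X\not\leq_{\sIFR{2}}Y$ is exactly right, as is the observation that neither variable can be taken exponential. But what you have written is a search strategy, not a proof: the statement is an existence claim, and you never exhibit the pair of distributions nor carry out either verification. All of the actual mathematical content --- finding parameters for which the universal $\sIFR{1}$ condition survives every affine reparametrization while the second iterate realises a forbidden sign pattern for some single $(a,b)$ --- is acknowledged as ``where essentially all the work lies'' and then left undone. As it stands, nothing is established.

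There is also a substantive reason to doubt that your chosen search space will deliver. You confine the hunt to tails that are finite linear combinations of decaying exponentials (maxima of independent exponentials, hyperexponential mixtures). The paper does not find its counter-example there: it introduces piecewise Pareto (``branched Pareto'') survival functions
$\TXs{1}(x)=\frac{c_1^2}{(x+c_1)^2}\dI_{[0,c_1]}(x)+\frac{(c_1+c_2)^2}{4(x+c_2)^2}\dI_{(c_1,+\infty)}(x)$,
takes $X\sim{\rm BP}(5,10)$ and $Y\sim{\rm BP}(2,6)$, and verifies everything by direct closed-form computation rather than by sign-variation counting: $c_1(x)=\TYs{1}^{-1}(\TXs{1}(x))$ comes out piecewise affine, hence convex (giving $X\leq_{\sIFR{1}}Y$ with no appeal to Theorem~\ref{thm:zeros} at all); the ratio $d(x)$ is explicitly decreasing (giving $X\leq_{\rm DMRL}Y$ directly); and $c_2^\prime(x)$ is an explicit piecewise rational function that is visibly non-monotone. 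Whether a counter-example even exists among exponential sums is not settled by your argument --- the paper's Section~7 shows that the most natural two-component parallel systems of exponentials are in fact $\sIFR{s}$-comparable for \emph{every} $s$, so the obvious candidates in your class fail. Until you either produce explicit parameters in your class and complete both checks, or move to a family where the iterated tails and their inverses are computable in closed form as the paper does, the proof is incomplete.
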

\begin{proof}
Given $c_1,c_2>0$, we say that a random variable $X$ has branched Pareto distribution with parameters $c_1,c_2$,  $X\sim{\rm BP}(c_1,c_2)$, if its survival function is:
$$
\TXs{1}(x)=\frac{c_1^2}{(x+c_1)^2}\dI_{[0,c_1]}(x)+\frac{(c_1+c_2)^2}{4(x+c_2)^2}\dI_{(c_1,+\infty)}(x).
$$
Explicit expressions for the 2-iterated distribution and for the corresponding inverse functions are:
$$
\begin{array}{l}
\displaystyle\TXs{2}(x)=\frac{4}{3c_1+c_2}\left(\frac{c_1^2}{x+c_1}+\frac{c_2-c_1}{4}\right)\dI_{[0,c_1]}(x)+
\frac{(c_1+c_2)^2}{(3c_1+c_2)(x+c_2)}\dI_{(c_1,+\infty)}(x), \\ \\
\displaystyle\TXs{1}^{-1}(x)=\left(\frac{c_1+c_2}{2\sqrt{x}}-c_2\right)\dI_{[0,\frac14]}(x)+
    \left(\frac{c_1}{\sqrt{x}}-c_1\right)\dI_{(\frac14,+\infty)}(x), \\ \\
\displaystyle
\TXs{2}^{-1}(x)=\left(\frac{(c_1+c_2)^2}{(3c_1+c_2)x}-c_2\right)\dI_{[0,\frac{c_1+c_2}{3c_1+c_2}]}(x)
+\left(\frac{4c_1^2}{(3c_1+c_2)x-(c_2-c_1)}-c_1\right)\dI_{(\frac{c_1+c_2}{3c_1+c_2},+\infty)}(x).
\end{array}
$$
Moreover,
$$
\TXs{2}(\TXs{1}^{-1})(x)=\frac{2(c_1+c_2)\sqrt{x}}{3c_1+c_2}\dI_{[0,\frac{1}{4}]}(x)
+\frac{4}{3c_1+c_2}\left( c_1\sqrt{x}+\frac{c_2-c_1}{4}\right)\dI_{(\frac{1}{4},+\infty)}(x),
$$
and
$$
\TXs{1}(\TXs{2}^{-1})(x)=
\frac{(3c_1+c_2)^2}{4(c_1+c_2)^2} x^2\dI_{[0,\frac{c_1+c_2}{3c_1+c_2}]}(x)
+\frac{((3c_1+c_2)x-(c_2-c_1))^2}{16c_1^2}\dI_{(\frac{c_1+c_2}{3c_1+c_2},+\infty)}(x).
$$
Choosing suitably the parameters $c_1$ and $c_2$, we obtain the counter-example. A possible choice is $X\sim{\rm BP}(5,10)$ and $Y\sim{\rm BP}(2,6)$. For these parameters, we find
$$
d(x)=\frac{10}{9}\dI_{[0,\frac14)}(x)+\frac{25\left( 2\sqrt{x}+1\right)}{12\left( 5\sqrt{x}+\frac{5}{4}\right)}\dI_{[\frac14,+\infty)}(x),
$$
that is decreasing,
%
$$
c_2^\prime(x)=\frac{81}{100}\dI_{[0,\frac35]}(x)+\frac{9x^2}{(5x-1)^2}\dI_{(\frac35,\frac23]}(x)
+\frac{4(3x-1)^2}{(5x-1)^2}\dI_{(\frac23,1]}(x),
$$
which is not monotone, and
$$
c_1(x)=\TYs{1}^{-1}(\TXs{1})(x)=
\left(\frac{2(x+5)}{5}-2\right)\dI_{[0,5]}(x)+\left(\frac{8(x+10)}{15}-6\right)\dI_{(5,+\infty)}(x),
%
$$
which is convex.
%
%
%
\end{proof}

\section{A criterium for $\sIFR{s}$ ordering and a first application}
We have recalled (see Theorem~\ref{AOmain}), the criterium introduced by Arab and Oliveira~\cite{AO18} to prove $\sIFR{s}$ order between two different random variables, and we have mentioned, in Theorem~\ref{AOmain1}, the straightforward extension to prove the $\sIFRA{s}$ order. The criterium introduced in Theorem~\ref{AOmain} was used in Arab and Oliveira~\cite{AO18} to establish the iterated order within the families of the Gamma or the Weibull distributions. The proofs given in Arab and Oliveira~\cite{AO18} required a careful analysis of the sign variation of
$$
P_s(x)=\log f_Y(x)-\log f_X(ax+b)+\log\frac{\dE X^{s-1}}{a^{s}\dE Y^{s-1}},
$$
(or of $P_{s-1}$, defined in Theorem~\ref{AOmain}). A close look at those proofs shows that the difficult cases to handle always correspond to $b<0$, needing a correct positioning of the roots. So, it would be quite useful if we could reduce the need to verify the behaviour described in Theorem~\ref{AOmain}, considering only $a>0$ and $b\geq 0$. We may obtain such a simplification with the help of the $\sIFRA{s}$ ordering.
\begin{thm}
\label{thm:newcrit}
Let $X$ and $Y$ be random variables with distribution functions $F_X,F_Y\in\mathcal{F}$, respectively. If $X\leq_{\sIFRA{s}}Y$ and the criterium from Theorem~\ref{AOmain} is verified for $b\geq0$, then $X\leq_{\sIFR{s}}Y$.
\end{thm}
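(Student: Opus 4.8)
The plan is to return to the sign-variation characterization of the $\sIFR{s}$ order. By Theorem~\ref{convexity-equivalence}(1) together with Remark~\ref{AOalpha}, it is enough to prove that for every $a>0$ and every $b\in\mathbb{R}$ the function $g_b(x)=\TYs{s}(x)-\TXs{s}(ax+b)$ changes sign at most twice on $[0,\infty)$ and, when it changes sign twice, in the order ``$+,-,+$''. I would split the verification according to the sign of $b$. For $b\ge0$ there is nothing new to do: the hypothesis is precisely the input that the proof of Theorem~\ref{AOmain} (integrating $H_{s-1}$ or $H_s$ and invoking Lemma~\ref{sign-integral}) converts, for each fixed pair $(a,b)$, into the required ``$+,-,+$'' pattern for $g_b$. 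So the whole content of the theorem is to dispose of the range $b<0$, and this is where the $\sIFRA{s}$ hypothesis must be spent.

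For $b<0$ fix $a>0$ and set $x_1=-b/a>0$. On $[0,x_1)$ one has $ax+b<0$, hence $\TXs{s}(ax+b)=1$ by the extended definition, so $g_b(x)=\TYs{s}(x)-1\le0$; thus $g_b$ is nonpositive on this first interval. On $[x_1,\infty)$ the argument $u=ax+b$ runs over $[0,\infty)$, and since $\TYs{s}$ is decreasing, $g_b(x)\ge0$ is equivalent to $c_s(u)\ge x=(u-b)/a$, that is to $G(u):=a\,c_s(u)-u\ge -b$, where $c_s(u)=\TYs{s}^{-1}(\TXs{s}(u))$. The assumption $X\leq_{\sIFRA{s}}Y$ says exactly that $c_s$ is star-shaped, i.e. $c_s(0)=0$ and $u\mapsto c_s(u)/u$ is nondecreasing.

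The crux is to show that, for the level $-b\ge0$, the superlevel set $\{u\ge0:G(u)\ge -b\}$ is an up-set of the form $[\tau,\infty)$. Writing $G(u)=u\bigl(a\,c_s(u)/u-1\bigr)$, star-shapedness makes the factor $a\,c_s(u)/u-1$ nondecreasing, so it is negative up to some $u^\ast$ and nonnegative afterwards; consequently $G\le0$ on $(0,u^\ast]$, while on $(u^\ast,\infty)$ the function $G$ is a product of two nonnegative nondecreasing factors and is therefore itself nondecreasing. Hence $G$ first stays below any positive level and then, once it exceeds it, never returns, which gives the claimed up-set and shows that $g_b$ is nonpositive on an initial interval and nonnegative thereafter; its sign pattern is at most ``$-,+$''. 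Gluing this with the nonpositivity on $[0,x_1)$ leaves a single ``$-,+$'' crossing, a legal sub-case of ``$+,-,+$''. Combined with the $b\ge0$ range, this yields the required pattern for all $a>0$ and $b\in\mathbb{R}$, and Theorem~\ref{convexity-equivalence}(1) then gives $X\leq_{\sIFR{s}}Y$. The main obstacle I anticipate is exactly this monotonicity/up-set step: one must see that star-shapedness — strictly weaker than the convexity we are trying to prove — is nonetheless sufficient to force $G$ to be eventually increasing on the region where it is positive, and one must check that the two pieces of the domain, separated at $x_1$, genuinely fuse into one crossing rather than two.
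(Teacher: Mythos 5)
Your argument is correct and follows essentially the same strategy as the paper's proof: the hypothesis involving Theorem~\ref{AOmain} disposes of $b\geq 0$, and the star-shapedness of $c_s$ (the $\sIFRA{s}$ assumption) is spent entirely on reducing the case $b<0$ to a single ``$-,+$'' crossing. The paper packages that last step more compactly --- it sets $V(x)=\TXs{s}^{-1}(\TYs{s}(x))-(ax+b)$ and notes that $V(x)/x$ is a decreasing function minus the increasing function $b/x$, hence has at most one root --- and your superlevel-set analysis of $G(u)=a\,c_s(u)-u$, via the factorization $G(u)=u\bigl(a\,c_s(u)/u-1\bigr)$, is an equivalent reformulation of the same monotonicity fact, with the added (harmless) care of treating the initial interval where $ax+b<0$ explicitly.
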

\begin{proof}
To prove that $X\leq_{\sIFR{s}}Y$, we need to verify that $c_s(x)=\TYs{s}^{-1}(\TXs{s}(x))$ is convex or, equivalently, that $\TXs{s}^{-1}(\TYs{s}(x))$ is concave. Taking into account Theorem~20 in Arab and Oliveira~\cite{AO18}, this is equivalent to verifying that $V(x)=\TXs{s}^{-1}(\TYs{s}(x))-(ax+b)$ has, for every real numbers $a$ and $b$, at most the sign variation ``$-,+,-$''. The assumption $X\leq_{\sIFRA{s}}Y$ means that $\frac{c_s(x)}{x}$ is increasing or, equivalently, that $\frac{\TXs{s}^{-1}(\TYs{s}(x))}{x}$ is decreasing. For $x>0$, the sign variation of $V(x)$ is the same as the sign variation of $\frac{V(x)}{x}=\left(\frac{\TXs{s}^{-1}(\TYs{s}(x))}{x}-a\right)-\frac{b}{x}$. The expression in the parenthesis is decreasing and, for $b<0$, $\frac{b}{x}$ is increasing, therefore $\frac{V(x)}{x}$ has, at most, one root, so the proof is concluded.
\end{proof}
We may now prove a comparison result ordering two distributions, one from the Weibull family and the other from the Gamma family.
\begin{prop}
\label{prop:WG1}
If $\alpha>1$, then ${\rm Weibull}(\alpha,\theta_1)\leq_{\sIFR{s}}\Gamma(\alpha,\theta_2)$, for every $s\geq 1$.
\end{prop}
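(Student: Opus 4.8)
The plan is to apply the new criterium of Theorem~\ref{thm:newcrit} with $X\sim{\rm Weibull}(\alpha,\theta_1)$ and $Y\sim\Gamma(\alpha,\theta_2)$, so that it suffices to establish two things: that $X\leq_{\sIFRA{s}}Y$, and that the sign condition of Theorem~\ref{AOmain} holds only on the easier range $b\geq0$. Using the scale-invariance noted after Definition~\ref{DEF S-IFR}, I would first normalize $\theta_1,\theta_2$ conveniently. Since the shape parameter $\alpha$ is common to both families, the dependence on $s$ in the functions $P_s$ and $P_{s-1}$ enters only through the additive constant $\log\frac{\dE X^{s-1}}{a^{s}\dE Y^{s-1}}$ (respectively with $a^{s-1}$); this is a mere vertical shift of an $x$-profile that does not itself depend on $s$, so the whole analysis can be carried out essentially uniformly in $s$, and the clause ``for every $s\geq1$'' costs nothing extra once the shape is understood.

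For the $b\geq0$ part of the criterium I would work with the density function $P_s(x)=\log f_Y(x)-\log f_X(ax+b)+\log\frac{\dE X^{s-1}}{a^{s}\dE Y^{s-1}}$. Two computations drive everything. First, at $b=0$ the logarithmic terms coming from the common factor $x^{\alpha-1}$ cancel, leaving the strictly convex profile $P_s(x)=\frac{a^{\alpha}}{\theta_1^{\alpha}}x^{\alpha}-\theta_2 x+c$ (with $c$ an $s$- and $a$-dependent constant), whose only admissible sign patterns, given $P_s(+\infty)=+\infty$, are ``$+$'', ``$-,+$'' and ``$+,-,+$''. Second, for $b>0$ the vanishing of the Gamma density at the origin forces $P_s(0^{+})=-\infty$ while $P_s(+\infty)=+\infty$, so the pattern necessarily begins with ``$-$''; differentiating and using the identity $\frac{\alpha-1}{x}-\frac{(\alpha-1)a}{ax+b}=\frac{(\alpha-1)b}{x(ax+b)}$ to write $P_s'$ as a decreasing term plus an increasing term minus $\theta_2$, I would control the critical points of $P_s$ and conclude that the only admissible pattern is ``$-,+$''. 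In both cases the change of sign occurs at most in the order ``$+,-,+$'', which is what Theorem~\ref{AOmain} requires on $b\geq0$.

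The genuinely delicate step, and the one I expect to be the main obstacle, is the $\sIFRA{s}$ ordering $X\leq_{\sIFRA{s}}Y$. The naive route, namely checking the one-sided criterium of Theorem~\ref{AOmain1} (pattern ``$-,+$'' for $P_s$ or $P_{s-1}$ at $b=0$), fails: for the convex profile above, small values of $a$ force $c>0$ while the interior minimum is negative, producing the spurious pattern ``$+,-,+$''. I would therefore argue directly from Theorem~\ref{convexity-equivalence}(2), studying the sign variation of $\TYs{s}(x)-\TXs{s}(ax)$ itself. This difference vanishes at both endpoints; near $0$ its sign is governed by the comparison of $a$ with the ratio of iterated means $\muXs{s-1}/\muYs{s-1}$, while at $+\infty$ it is always ``$+$'' because for $\alpha>1$ the Weibull tail ultimately decays faster than the Gamma tail. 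Feeding the sign pattern of $P_s$ through the integration Lemma~\ref{sign-integral} then recovers the pattern of the tail difference as one of its final parts, and the boundary information rules out ``$+,-,+$'', leaving at most ``$-,+$''.

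The crux of this last step is the bookkeeping across the whole range of $a$: one must show that whenever the tail difference starts positive at the origin it cannot dip back negative, and whenever it starts negative it returns to positive only once. I expect this to require splitting according to the sign of $\TYs{s}(x)-\TXs{s}(ax)$ at $0^{+}$ (equivalently, the position of $a$ relative to $\muXs{s-1}/\muYs{s-1}$) together with a quantitative comparison of the unique interior critical point of the convex profile against the admissible constant, rather than any single closed-form identity. Once $X\leq_{\sIFRA{s}}Y$ is secured alongside the $b\geq0$ sign condition, Theorem~\ref{thm:newcrit} yields $X\leq_{\sIFR{s}}Y$, that is ${\rm Weibull}(\alpha,\theta_1)\leq_{\sIFR{s}}\Gamma(\alpha,\theta_2)$, for every $s\geq1$.
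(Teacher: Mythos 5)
Your overall strategy is the paper's: normalize the scale parameters, invoke Theorem~\ref{thm:newcrit}, establish $X\leq_{\sIFRA{s}}Y$ from the $b=0$ case, and then check the sign condition only for $b\geq 0$. Your treatment of $b=0$ is essentially identical to the paper's Step~1: the logarithms of $x^{\alpha-1}$ cancel, $P_s(x)=a^{\alpha}x^{\alpha}-x+c$ is convex with possible sign patterns ``$+$'', ``$-,+$'', ``$+,-,+$'', and the last pattern is eliminated at the level of $V_s(x)=\TYs{s}(x)-\TXs{s}(ax)$ by Lemma~\ref{sign-integral} together with $V_s(0)=0$; you are also right that the one-sided criterium of Theorem~\ref{AOmain1} cannot be applied as a black box here.

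The gap is in your $b>0$ step. Writing $P_s^{\prime}(x)=\frac{(\alpha-1)b}{x(ax+b)}+a\alpha(ax+b)^{\alpha-1}-1$ as ``a decreasing term plus an increasing term minus a constant'' gives no control whatsoever on the number of its zeros: every function of bounded variation is a sum of a decreasing and an increasing function, so this decomposition cannot justify your conclusion that the only admissible pattern for $P_s$ is ``$-,+$''. That conclusion is in fact stronger than what the paper proves. The paper reduces to the numerator $N_s(x)=a\alpha x(ax+b)^{\alpha}-ax^2-bx+b(\alpha-1)$ of $P_s^{\prime}$, shows $N_s^{\prime\prime\prime}\geq 0$, and works through a case table on the signs of $N_s^{\prime\prime}(0)$ and $N_s^{\prime}(0)$ (using $N_s(0)=b(\alpha-1)>0$) to conclude that $N_s$, hence $P_s^{\prime}$, has at most the sign variation ``$+,-,+$''; consequently $P_s$ may have the pattern ``$-,+,-,+$'', which violates the literal hypothesis of Theorem~\ref{AOmain}. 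The paper then salvages the argument one level up: by Lemma~\ref{sign-integral} the possible sign variations of $V_s$ are final parts of ``$-,+,-,+$'', and $V_s(0)=1-\TXs{s}(b)>0$ forces the pattern to start positive, leaving only ``$+$'' or ``$+,-,+$''. Your proposal neither excludes the ``$-,+,-,+$'' possibility for $P_s$ nor performs this final integration-plus-boundary-value step, so as written the $b>0$ case is not established.
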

\begin{proof}
Choose $X$ with ${\rm Weibull}(\alpha,1)$ distribution with density $f_X(x)=\alpha x^{\alpha-1}e^{-x^\alpha}$, and $Y$ with $\Gamma(\alpha,1)$ distributed with density $f_Y(x)=\frac{1}{\Gamma(\alpha)}x^{\alpha-1}e^{-x}$. We are taking $\theta_1=\theta_2=1$, as these are scale parameters, so their value does not affect the order relation between the random variables. We want to prove that $X\leq_{\sIFR{s}}Y$. Put $V_s(x)=\TYs{s}(x)-\TXs{s}(ax+b)$. We will now analyse the sign variation of $V_s$ for $x\geq 0$.
\begin{description}
\item[{\rm \textit{Step 1. The $\sIFRA{s}$ ordering}.}]
On the definition of $V_s$ take $b=0$. Therefore, we have
$$
P_s(x)=-(\alpha-1)\log(a)-x+a^\alpha x^\alpha-\log(\alpha\Gamma(\alpha))+\log\frac{\dE X^{s-1}}{a^s\dE Y^{s-1}},
$$
implying that $\lim_{x\rightarrow+\infty}P_s(x)=+\infty$, and $P_s^\prime(x)=-1+a^\alpha\alpha x^{\alpha-1}$, so the sign variation of $P_s^\prime$ is ``$-,+$'', and the monotonicity of $P_s$ is ``$\searrow\nearrow$''. If $P_s(0)<0$, the sign variation of $P_s$ is ``$-,+$'', so, using Lemma~\ref{sign-integral}, the sign variation of $V_s$ is, at most, ``$-,+$''. If $P_s(0)\geq 0$, the sign variation of $P_s$ may be ``$+,-,+$''. The function $V_s$ is obtained by integrating $H_s$, given in Theorem~\ref{AOmain}, so, again based on Lemma~\ref{sign-integral}, and taking into account that $V_s(0)=0$, the sign variation of $V_s$ is, at most, ``$-,+$''. Therefore, we have proved that $X\leq_{\sIFRA{s}}Y$.

\item[{\rm \textit{Step 2. The $\sIFR{s}$ ordering}.}]
We consider now $V_s$ with $a>0$ and $b>0$. Then we have
$$
P_s(x)=(\alpha-1)\left(\log x-\log(ax+b)\right)-x+(ax+b)^\alpha-\log(\alpha\Gamma(\alpha))+\log\frac{\dE X^{s-1}}{a^s\dE Y^{s-1}}.
$$
It is obvious that $\lim_{x\rightarrow+\infty}P_s(x)=+\infty$. Differentiating, we have that
$$
P_s^\prime(x)=\frac{\alpha-1}{x}-\frac{a(\alpha-1)}{ax+b}-1+a\alpha(ax+b)^{\alpha-1}
=\frac{N_s(x)}{x(ax+b)},
$$
where $N_s(x)=a\alpha x(ax+b)^\alpha-ax^2-bx+b(\alpha-1)$.
Hence, as we will be considering $x$ such that the denominator is positive, the sign of $P_s^\prime$ is determined by the sign of $N_s$. Differentiating $N_s$, we obtain
$
N_s^{\prime\prime\prime}(x)=a^3\alpha^2(\alpha-1)(ax+b)^{\alpha-3}\bigl(a(\alpha+1)x+3b\bigr).
$
Therefore, ${\rm sgn}(N_s^{\prime\prime\prime})={\rm sgn}\bigl(a(\alpha+1)x+3b\bigr)$.
As $a(\alpha+1)x+3b\geq 0$, it follows that $N_s^{\prime\prime\prime}(x)\geq 0$, hence $N_s^{\prime\prime}$ is increasing. We have that $\lim_{x\rightarrow+\infty}N_s^{\prime\prime}(x)=+\infty$, and $N_s^{\prime\prime}(0)=2a(a\alpha^2b^{\alpha-1}-1)$, and this last one may be either positive or negative. Looking now at $N_s^\prime$, we have $N_s^\prime(+\infty)=+\infty$, and $N_s^\prime(0)=b(a\alpha b^{\alpha-1}-1)$, which my be either positive or negative, irrespective to the sign at the origin for $N_s^{\prime\prime}$. Finally, we have $N_s(0)=b(\alpha-1)>0$ and $\lim_{x\rightarrow+\infty}N_s(x)=+\infty$. The table below summarizes the most sign varying possibilities, taking into account the behaviour just described.
\begin{center}
\begin{tabular}{lcccc}\hline
 & \multicolumn{2}{c}{$N^{\prime\prime}(0)>0$ } & \multicolumn{2}{c}{$N^{\prime\prime}(0)<0$ } \\
sign variation of $N^{\prime\prime}$ & \multicolumn{2}{c}{$+$} & \multicolumn{2}{c}{$-,+$} \\
monotonicity of $N^\prime$ & \multicolumn{2}{c}{$\nearrow$} & \multicolumn{2}{c}{$\searrow\nearrow$} \\ \hline
 & $N^{\prime}(0)>0$ & $N^{\prime}(0)<0$ & $N^{\prime}(0)>0$ & $N^{\prime}(0)<0$ \\
sign variation of $N^\prime$ & $+$ & $-,+$ & $+,-,+$ & $-,+$ \\
monotonicity of $N$ & $\nearrow$ & $\searrow\nearrow$ & $\nearrow\searrow\nearrow$ & $\searrow\nearrow$ \\ \hline
sign variation of $N$ & $+$ & $+,-,+$ & $+,-,+$ & $+,-,+$ \\ \hline
\end{tabular}
\end{center}
As ${\rm sgn}(P_s^\prime)={\rm sgn}(N_s)$, it follows that the possible monotonicities for $P_s$ are ``$\nearrow$'' or ``$\nearrow\searrow\nearrow$''. Going back to the expression for $P_s$, we verify that $\lim_{x\rightarrow0^+}P_s(x)=-\infty$ and $\lim_{x\rightarrow+\infty}P_s(x)=+\infty$, therefore, the possible sign variation of $P_s$ are ``$-,+$'' or ``$-,+,-,+$''.
Based again on Lemma~\ref{sign-integral}, for the first case it follows that the possible sign variations for $V_s$ are ``$-,+$'' or ``$+$'', while in the second case, the possible sign variation for $V_s$ are ``$-,+,-,+$'', ``$+,-,+$'', ``$-,+$'' or ``$+$''. Taking into account that $V_s(0)=1-\TXs{s}(b)\geq0$, actually only the sign variations starting at positive values are possible, that is, the possibilities are ``$+$'' or ``$+,-,+$'', so the proof is concluded.
\end{description}
\end{proof}
\begin{rem}
The proof of the comparison just described may be approached using Theorem~\ref{AOmain}, that is, the same methodology as in Arab and Oliveira~\cite{AO18}. In this case we would need to describe the sign variation also for the case $b<0$, and this can only be successfully completed assuming $\alpha>2$, due to the need to have a precise characterization of the location of the roots of $V_s$ enabling to derive the appropriate control of the sign variation of this function.
\end{rem}
Using the criterium proved in Theorem~\ref{thm:newcrit} we may complete the comparison within the Gamma or the Weibull families of distributions, partially given in Propositions~30--33 in Arab and Oliveira~\cite{AO18}. We state here the complete result. 
\begin{thm}
Let $\alpha^\prime>\alpha>0$.
\begin{enumerate}
\item
If $X\sim\Gamma(\alpha^\prime,\theta_1)$ and $Y\sim\Gamma(\alpha,\theta_2)$ then $X\leq_{\sIFR{s}}Y$.
\item
If $X\sim{\rm Weibull}(\alpha^\prime,\theta_1)$ and $Y\sim{\rm Weibull}(\alpha,\theta_2)$ then $X\leq_{\sIFR{s}}Y$.
\end{enumerate}
\end{thm}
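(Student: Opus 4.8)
The plan is to handle both families uniformly through Theorem~\ref{thm:newcrit}. Since $\theta_1,\theta_2$ are scale parameters, the comments after Definition~\ref{DEF S-IFR} let me set $\theta_1=\theta_2=1$, take $X$ with the ``$\alpha^\prime$'' law and $Y$ with the ``$\alpha$'' law, and reduce the claim to $X\leq_{\sIFR{s}}Y$. By Theorem~\ref{thm:newcrit} it is then enough to establish (i) $X\leq_{\sIFRA{s}}Y$, and (ii) the criterium of Theorem~\ref{AOmain} for $b\geq0$. Both are read off the sign variation of the function $P_s$ of Theorem~\ref{AOmain}, which here becomes $P_s(x)=(\alpha-1)\log x-(\alpha^\prime-1)\log(ax+b)+(a-1)x+b+C$ in the Gamma case and $P_s(x)=(\alpha-1)\log x-(\alpha^\prime-1)\log(ax+b)-x^\alpha+(ax+b)^{\alpha^\prime}+C$ in the Weibull case, $C$ an additive constant. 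Recall that $V_s(x)=\TYs{s}(x)-\TXs{s}(ax+b)$ is obtained by integrating $H_s$ (whose sign coincides with that of $P_s$) against $(t-x)^{s-1}$, so Lemma~\ref{sign-integral} transports sign information from $P_s$ to $V_s$.

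\emph{Step 1 (the $\sIFRA{s}$ order).} I set $b=0$, mirroring Step~1 of Proposition~\ref{prop:WG1}. The two logarithmic terms then merge into $(\alpha-\alpha^\prime)\log x$, and since $\alpha-\alpha^\prime<0$ this gives $\lim_{x\to0^+}P_s(x)=+\infty$. Differentiating, the sign of $P_s^\prime$ is controlled by a single numerator: for the Gamma case it is the quadratic $a(a-1)x^2+a(\alpha-\alpha^\prime)x$, and for the Weibull case the numerator $(\alpha-\alpha^\prime)-\alpha x^\alpha+a^{\alpha^\prime}\alpha^\prime x^{\alpha^\prime}$, which one differentiation reveals to be ``$-,+$''. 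In either case $P_s$ is decreasing or first decreasing then increasing, so with $\lim_{x\to+\infty}P_s(x)=\pm\infty$ its sign variation is at most ``$+,-,+$''. Using $V_s(0)=0$ together with the sign of $V_s$ just to the right of the origin, Lemma~\ref{sign-integral} then leaves for $V_s$ only the variation ``$-,+$'' (or a constant sign), which is exactly the $\sIFRA{s}$ characterization; alternatively, this order is part of what is already recorded in Arab and Oliveira~\cite{AO18}.

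\emph{Step 2 (the criterium for $b\geq0$).} Now I take $b>0$ and write $P_s^\prime(x)=N_s(x)/\bigl(x(ax+b)\bigr)$. For the Gamma family $N_s$ is the quadratic $a(a-1)x^2+\bigl(a(\alpha-\alpha^\prime)+(a-1)b\bigr)x+(\alpha-1)b$, so $P_s^\prime$ changes sign at most twice, and a brief discussion of the signs of $a-1$ and $\alpha-1$ (which fix the leading coefficient and $N_s(0)$) bounds the number of turning points of $P_s$. The boundary values $\lim_{x\to0^+}P_s(x)=\pm\infty$ (according to ${\rm sgn}(\alpha-1)$) and the behaviour at $+\infty$ (governed by the leading power) then pin down the admissible sign variations of $P_s$. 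Integrating via Lemma~\ref{sign-integral} and discarding, by $V_s(0)=1-\TXs{s}(b)\geq0$, every variation that starts negative, the only surviving patterns for $V_s$ are ``$+$'', ``$+,-$'' and ``$+,-,+$'', all admissible for the $\sIFR{s}$ characterization restricted to $b\geq0$. Theorem~\ref{thm:newcrit} then delivers $X\leq_{\sIFR{s}}Y$.

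The main obstacle is the Weibull case of Step~2. Unlike Proposition~\ref{prop:WG1}, where a single Weibull power occurs, here $N_s$ couples the two distinct powers $x^\alpha$ and $(ax+b)^{\alpha^\prime}$, so the repeated differentiation of $N_s$ must be organized so that its sign is eventually decided by a monotone or affine factor, guaranteeing at most two sign changes of $P_s^\prime$ for every admissible $a>0$, $b>0$ and every $0<\alpha<\alpha^\prime$. A secondary subtlety is that for some ranges of $a$ the function $P_s$ realises a pattern such as ``$-,+,-$'' that is not literally among those listed in Lemma~\ref{sign-integral}; in that case I would argue directly from the variation-diminishing effect of integration against $(t-x)^{s-1}$, combined with $V_s(0)\geq0$ and $V_s(+\infty)=0$, to conclude that $V_s$ still has at most the single change ``$+,-$''.
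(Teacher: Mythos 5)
Your overall strategy --- reduce to unit scale, then apply Theorem~\ref{thm:newcrit} by establishing the $\sIFRA{s}$ order with $b=0$ and the Theorem~\ref{AOmain} criterium for $b\geq0$ --- is the same architecture the paper uses, but your write-up stops short of a proof at exactly the point where the work is. You say yourself that ``the main obstacle is the Weibull case of Step~2,'' where $N_s$ couples the two distinct powers $x^\alpha$ and $(ax+b)^{\alpha^\prime}$, and you only describe how the repeated differentiation ``must be organized''; no actual bound on the sign changes of $P_s^\prime$ is derived for general $0<\alpha<\alpha^\prime$, $a>0$, $b>0$. That is the heart of the matter, not a detail to be filled in. The second admitted gap is also real: when $\alpha<1$ and $a<1$ the Gamma-case $N_s$ is a downward parabola with $N_s(0)=(\alpha-1)b<0$, so $P_s$ can realise the pattern ``$-,+,-$'', which is \emph{not} among the input patterns of Lemma~\ref{sign-integral}; the ``variation-diminishing effect of integration against $(t-x)^{s-1}$'' you invoke instead is not established anywhere in the paper in that generality, so this branch of your case analysis is unsupported as written.

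The paper avoids both problems by not reproving everything from scratch: it invokes Propositions~30--33 of Arab and Oliveira~\cite{AO18}, which already settle the within-family comparisons except when $1>\alpha^\prime>\alpha>0$, and then handles only that remaining case by repeating the two-step scheme of Proposition~\ref{prop:WG1} combined with the sign-variation arguments of Propositions~30 and~32 of~\cite{AO18}. If you want to keep your self-contained route, you must (i) actually carry out the derivative analysis of $N_s$ in the Weibull case for arbitrary $\alpha^\prime>\alpha>0$ and all $b>0$, and (ii) either prove the extension of Lemma~\ref{sign-integral} to the pattern ``$-,+,-$'' (which does hold by the same argument, but needs stating) or restructure the case analysis so that only the listed patterns occur. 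As it stands, the proposal is a correct plan with the two decisive steps missing.
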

\begin{proof}
Given Propositions~30--33 in Arab and Oliveira~\cite{AO18}, we only need to consider the case where $1>\alpha^\prime>\alpha>0$. The result follows repeating the steps for the proof of Proposition~\ref{prop:WG1}, with the arguments used in Propositions~30 and 32 in Arab and Oliveira~\cite{AO18}.
\end{proof}
\begin{rem}
The treatment of the case $1>\alpha^\prime>\alpha>0$ was out of reach of the methodology used in \cite{AO18}, exactly due to the difficulty on handling the sign variation of the function $V_s$ when choosing $b<0$.
\end{rem}

\section{Failure rate ordering of exponentially distributed parallel systems}
We now apply our results to prove extended ordering relations among parallel systems with components that have exponentially distributed lifetimes. We will be extending Theorem~3.1 by Kochar and Xu~\cite{KX09}, where these authors prove that a parallel system where the components have the same exponential distribution ages faster than a same sized system where the components have exponential lifetimes with different mean values.
%
We will be using the criterium introduced in Theorem~\ref{thm:newcrit} to extend, for general $s\geq 1$, this ageing characterization of parallel systems proved by Kochar and Xu~\cite{KX09}.

Throughout, this section we take
\begin{equation}
\label{maxdefs}
\begin{array}{lcp{8cm}}
X=\max(X_1,X_2), & \; & $X_1$ and $X_2$ are independent mean 1 exponentially distributed, \\
Y=\max(Y_1,Y_2), & & $Y_1$, mean 1 exponentially distributed, $Y_2$, mean $1/\lambda<1$ exponentially distributed, and independent.
\end{array}
\end{equation}
The choice made for the mean values of the components lifetimes is not really essential, but makes our proofs easier to explain. The only important fact is that $X_1$ and $X_2$ have the same mean. Indeed, taking into account the comments after Definition~\ref{DEF S-IFR}, we may always renormalize the variables to reduce to the present case. This section studies the $s-$iterated failure rate order between $X$ and $Y$. The main tool for the analysis is the result about roots of polynomials of exponentials, recalled in Theorem~\ref{thm:zeros}.

As already mentioned in course of the proof of Proposition~\ref{prop:ex1}, it is easily verified that
$$
\TYs{s}(x)=\frac{1}{c(s,\lambda)}\left(e^{-x}+\frac{e^{-\lambda x}}{\lambda^{s-1}}-\frac{e^{-(\lambda+1) x}}{(\lambda+1)^{s-1}}\right),
$$
where $c(s,\lambda)=1+\frac{1}{\lambda^{s-1}}-\frac{1}{(\lambda+1)^{s-1}}$. The tail of the distribution of $X$ is obtained replacing $\lambda$ by 1 in these expressions.

\smallskip

For sake of readability, we will present the various partial results leading to the comparison of $X$ and $Y$ in $\sIFR{s}$ order in a series of propositions.
\begin{prop}
\label{prop1}
Let $X$ and $Y$ be defined as in (\ref{maxdefs}). For every $s\geq 1$ and $x\geq 0$, we have $\TXs{s}(x)\geq\TYs{s}(x)$.
\end{prop}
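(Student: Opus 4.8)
The plan is to show that the difference $D_s(x)=\TXs{s}(x)-\TYs{s}(x)$ is nonnegative on $[0,\infty)$, proceeding by induction on $s$ and relying on Theorem~\ref{thm:zeros} to bound the number of sign changes of $D_s$. Both $\TXs{s}$ and $\TYs{s}$ are linear combinations of the exponentials $e^{-x}$, $e^{-2x}$, $e^{-\lambda x}$ and $e^{-(\lambda+1)x}$ (recall that $\TXs{s}$ is obtained from $\TYs{s}$ by setting $\lambda=1$, so its only terms are $e^{-x}$ and $e^{-2x}$). Hence $D_s$ is again such a combination, and the whole argument reduces to locating its roots and reading off its sign at the two ends of $[0,\infty)$.

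First I would record the coefficients of $D_s$. The term $e^{-x}$ carries $\frac{2}{c(s,1)}-\frac{1}{c(s,\lambda)}$, which is strictly positive: $c(s,1)<2$ forces $\frac{2}{c(s,1)}>1$, while $c(s,\lambda)\geq1$ forces $\frac{1}{c(s,\lambda)}\leq1$. The terms $e^{-2x}$ and $e^{-\lambda x}$ carry negative coefficients, and $e^{-(\lambda+1)x}$ carries a positive one. Ordering the exponentials by increasing rate (equivalently, applying Theorem~\ref{thm:zeros} with $p_j=e^{-r_j}$), the sign pattern of the coefficients is ``$+,-,-,+$'' (and ``$+,-,+$'' in the degenerate case $\lambda=2$, where the $e^{-2x}$ and $e^{-\lambda x}$ terms merge into a single negative coefficient). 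In every case there are exactly two sign changes, so $D_s$ has at most two real zeros. Moreover, since the coefficient of the slowest-decaying term $e^{-x}$ is positive, $D_s(x)\to0^+$ and $D_s$ is positive for all large $x$, while $D_s(0)=0$ because both tails equal $1$ at the origin.

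It remains to control $D_s$ near the origin. For the base case $s=1$ a direct factorisation gives $D_1(x)=\bigl(e^{-x}-e^{-\lambda x}\bigr)\bigl(1-e^{-x}\bigr)\geq0$, using $\lambda>1$. For the inductive step, differentiating $\TXs{s}(x)=\frac{1}{\muXs{s-1}}\int_x^\infty\TXs{s-1}(t)\,dt$ and evaluating at $0$ gives $\TXs{s}'(0)=-1/\muXs{s-1}$, and similarly for $Y$, so that $D_s'(0)=\frac{1}{\muYs{s-1}}-\frac{1}{\muXs{s-1}}$. By the induction hypothesis $D_{s-1}\geq0$; since $D_{s-1}$ is continuous and not identically zero, $\muXs{s-1}-\muYs{s-1}=\int_0^\infty D_{s-1}(t)\,dt>0$, whence $D_s'(0)>0$ and $D_s$ is positive just to the right of $0$. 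If $D_s$ were negative at some interior point, then, being positive near $0^+$ and near $+\infty$, it would have to vanish at least twice in $(0,\infty)$, which together with the root at $x=0$ would give at least three real zeros, contradicting the bound of two. Therefore $D_s\geq0$, completing the induction.

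The main obstacle is the behaviour near the origin: the zero count alone leaves room for a single sign change in $(0,\infty)$, and excluding it is precisely what forces the computation of $D_s'(0)$ together with the inductive comparison $\muXs{s-1}>\muYs{s-1}$. The only other point requiring care is the value $\lambda=2$, where two exponents coincide; this merely collapses two terms and lowers the count, so it presents no difficulty.
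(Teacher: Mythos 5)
Your proof is correct, but it takes a genuinely different route at the decisive step. You open exactly as the paper does: write $D_s=\TXs{s}-\TYs{s}$ as a linear combination of $e^{-x}$, $e^{-2x}$, $e^{-\lambda x}$, $e^{-(\lambda+1)x}$, check the coefficient sign pattern ``$+,-,-,+$'' (you even justify the positivity of the $e^{-x}$ coefficient, which the paper only asserts, and you correctly dispose of the degenerate case $\lambda=2$), invoke Theorem~\ref{thm:zeros} to get at most two real zeros, and record $D_s(0)=0$ together with $D_s(x)\to 0^+$. Where you part ways is in excluding a negative excursion on $(0,\infty)$: the paper differentiates $s$ times, uses $U_s^{(s)}(0)=0$, and runs a cascade over $U_s^{(s)},U_s^{(s-1)},\ldots,U_s'$ (with separate treatment of $s$ even and $s$ odd) to pin the sign variation of $U_s'$ down to ``$+,-$''. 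You instead induct on $s$, exploiting the recursive definition of the iterated tails: $\TXs{s}'(0)=-1/\muXs{s-1}$ gives $D_s'(0)=\frac{1}{\muYs{s-1}}-\frac{1}{\muXs{s-1}}$, and the induction hypothesis $D_{s-1}\geq 0$ (continuous, not identically zero since $\lambda\neq 1$) yields $\muXs{s-1}>\muYs{s-1}$, hence $D_s'(0)>0$; positivity just to the right of $0$ and near $+\infty$ together with the two-zero bound then forces $D_s\geq 0$. Your base case $D_1(x)=\bigl(e^{-x}-e^{-\lambda x}\bigr)\bigl(1-e^{-x}\bigr)\geq 0$ is correct. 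Your argument is shorter and avoids the parity case split altogether; what the paper's derivative cascade buys is a template that is reused, essentially verbatim, in Proposition~\ref{prop2} and Theorem~\ref{prop3}, where the relevant function no longer vanishes at the origin and a first-derivative-at-zero computation of your kind would not by itself settle the sign near $0$.
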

\begin{proof}
Define
\begin{eqnarray*}
\lefteqn{U_s(x)=\TXs{s}(x)-\TYs{s}(x)} \\
 & & = \frac{2^s e^{-x}-e^{-2x}}{2^s-1}
-\frac{1}{c(s,\lambda)}\left(e^{-x}+\frac{e^{-\lambda x}}{\lambda^{s-1}}-\frac{e^{-(\lambda+1) x}}{(\lambda+1)^{s-1}}\right) \\
 & &=
\left(\frac{2^s}{2^s-1}-\frac{1}{c(s,\lambda)}\right)e^{-x}-\frac{e^{-2x}}{2^s-1}
  -\frac{e^{-\lambda x}}{c(s,\lambda)\lambda^{s-1}}
  +\frac{e^{-(\lambda+1)x}}{c(s,\lambda)(\lambda+1)^{s-1}}.
\end{eqnarray*}
We are considering $\lambda>1$, so the signs of the coefficients of $U_s$, after ordering decreasingly with respect to the exponents, are ``$+,-,-,+$'' (the sign of the coefficients of $e^{-\lambda x}$ and $e^{-2x}$ are the same, so we do not need to consider the two cases). So, taking into account Theorem~\ref{thm:zeros}, $U_s$ has, at most, two real roots. One root is easily located, as $U_s(0)=0$. Moreover, notice that $\lim_{x\rightarrow-\infty}U_s(x)=+\infty$, governed by the sign of the coefficient of $e^{-x}$, while $\lim_{x\rightarrow+\infty}U_s(x)=0^+$, described by the sign of the coefficient of the exponential with the smallest exponent.
In order to locate the remaining root, we need to differentiate: for $k<s$, we have
$$
U_s^{(k)}(x)=(-1)^k\left[
\frac{2^s e^{-x}-2^ke^{-2x}}{2^s-1}
-\frac{1}{c(s,\lambda)}\left(e^{-x}+\frac{e^{-\lambda x}}{\lambda^{s-1-k}}-\frac{e^{-(\lambda+1) x}}{(\lambda+1)^{s-1-k}}\right)
\right],
$$
and
$$
U_s^{(s)}(x)=(-1)^s\left[
\frac{2^s e^{-x}-2^se^{-2x}}{2^s-1}
-\frac{1}{c(s,\lambda)}\left(e^{-x}+\lambda e^{-\lambda x}-(\lambda+1)e^{-(\lambda+1) x}\right)
\right].
$$
Hence, the signs of the coefficients of the exponentials alternate with each differentiation, and $U_s^{(s)}(0)=0$. It is now convenient to separate into two cases.
\begin{description}
\item[$s$ even:]
The signs of the coefficients in $U_s^{(s)}$ are ``$+,-,-,+$'', implying that\linebreak $\lim_{x\rightarrow-\infty}U_s^{(s)}(x)=+\infty$, $\lim_{x\rightarrow+\infty}U_s^{(s)}(x)=0^+$, and $U_s^{(s)}$ has, at most, two real roots. As $U_s^{(s)}(0)=0$, depending on the location of the second root, the sign variation in $(0,+\infty)$ of $U_s^{(s)}$ may be either ``$+$'' or ``$-,+$''. The sign of $U_s^{(s-1)}(0)$ is not determined, and the signs of the limits at $\pm\infty$ are reversed with respect to $U_s^{(s)}$, so we need to consider the two possibilities leading to the following possible situations:
\begin{center}
\begin{tabular}{p{3.8cm}cccc}\hline
$U_s^{(s)}$ & \multicolumn{2}{c}{``$+$''} & \multicolumn{2}{c}{``$-,+$''} \\ \hline
$U_s^{(s-1)}(0)$ & positive & negative & positive & negative \\
sign variation of $U_s^{(s-1)}$ in $(0,+\infty)$ & not possible & ``$-$'' & ``$+,-$'' & ``$-$'' \\ \hline
\end{tabular}
\end{center}
Therefore, there are only two possible sign variations for $U_s^{(s-1)}$ when $x\in(0,+\infty)$: ``$-$'' or ``$+,-$''. We may proceed to characterize the possible sign variation in $(0,+\infty)$ of $U_s^{(s-2)}$, repeating the above arguments. Again, notice that it is not possible to determine the sign of $U_s^{(s-2)}(0)$, so the possibilities are:
\begin{center}
\begin{tabular}{p{3.8cm}cccc}\hline
$U_s^{(s-1)}$ & \multicolumn{2}{c}{``$-$''} & \multicolumn{2}{c}{``$+,-$''} \\ \hline
$U_s^{(s-2)}(0)$ & positive & negative & positive & negative \\
sign variation of $U_s^{(s-2)}$ in $(0,+\infty)$ & ``$+$'' & not possible & ``$+$'' & ``$-,+$'' \\ \hline
\end{tabular}
\end{center}
We find, for the sign variation in $(0,+\infty)$ of $U_s^{(s-2)}$ exactly the same behaviour as for $U_s^{(s)}$, so we may repeat the arguments above to find that $U_s^\prime$ has the same sign variation in $(0,+\infty)$ as $U_s^{(s-1)}$, that is, it is either ``$-$'' or ``$+,-$''. Going back to $U_s$ remember $\lim_{x\rightarrow-\infty}U_s(x)=+\infty$, $\lim_{x\rightarrow+\infty}U_s(x)=0^+$, and $U_s(0)=0$. This behaviour does not allow for the case $U_s^\prime$ being always negative, so the sign variation of $U_s^\prime$ is ``$+,-$'', which implies that $U_s(x)\geq 0$, for every $x\geq 0$.

\item[$s$ odd:]
For this case, we have that the signs of the coefficients in $U_s^{(s)}$ are ``$-,+,+,-$'', $\lim_{x\rightarrow-\infty}U_s^{(s)}(x)=-\infty$, $\lim_{x\rightarrow+\infty}U_s^{(s)}(x)=0^-$, $U_s^{(s)}(0)=0$, and $U_s^{(s)}$ has, at most, two real roots. Therefore, the only possible sign variation in $(0,+\infty)$ for $U_s^{(s)}$ is ``$+,-$'', which is what we found for the $s-1$ derivative in the previous case. Hence, repeating the arguments, we find the same conclusion, that is, $U_s(x)\geq 0$, for every $x\geq 0$, as well.
\end{description}
\end{proof}

\begin{cor}
Let $X$ and $Y$ be defined as in (\ref{maxdefs}). Then $\frac{\TYs{s}^{-1}\TXs{s}(x)}{x}\leq 1$, for every $x>0$.
\end{cor}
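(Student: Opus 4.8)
The plan is to derive this corollary directly from Proposition~\ref{prop1} by exploiting the monotonicity of the iterated tail functions. The key observation is that each $\TYs{s}$, being a tail (survival) function of a nonnegative random variable, is continuous and strictly decreasing on $[0,+\infty)$, mapping this interval onto $(0,1]$; consequently its inverse $\TYs{s}^{-1}$ is a well-defined, strictly decreasing function on $(0,1]$. Since $\TXs{s}(x)\in(0,1]$ for every $x\geq 0$, the composition $\TYs{s}^{-1}(\TXs{s}(x))$ is meaningful, and this well-definedness is the only auxiliary point that needs a brief check.

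Next I would invoke Proposition~\ref{prop1}, which supplies $\TXs{s}(x)\geq\TYs{s}(x)$ for all $x\geq 0$ and every $s\geq 1$. Applying the decreasing map $\TYs{s}^{-1}$ to both sides of this inequality reverses it, yielding $\TYs{s}^{-1}(\TXs{s}(x))\leq\TYs{s}^{-1}(\TYs{s}(x))=x$. Dividing by $x>0$ then produces $\frac{\TYs{s}^{-1}(\TXs{s}(x))}{x}\leq 1$, which is precisely the assertion.

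There is essentially no serious obstacle here: the whole statement is a one-line consequence of Proposition~\ref{prop1} combined with the order-reversing property of a decreasing inverse. The only point that deserves mild care is confirming that $\TYs{s}^{-1}$ is genuinely defined and strictly decreasing on the relevant range, which follows at once from the continuity and strict monotonicity of the tails; these in turn are transparent from the explicit exponential-sum expression for $\TYs{s}$ recorded just before Proposition~\ref{prop1}.
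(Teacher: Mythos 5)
Your argument is correct and is essentially identical to the paper's own proof: both apply the decreasing map $\TYs{s}^{-1}$ to the inequality $\TXs{s}(x)\geq\TYs{s}(x)$ from Proposition~\ref{prop1} to obtain $\TYs{s}^{-1}(\TXs{s}(x))\leq x$ and then divide by $x>0$. The extra remark on the well-definedness and strict monotonicity of $\TYs{s}^{-1}$ is a harmless bit of added care.
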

\begin{proof}
We have just proved that $\TXs{s}(x)\geq\TYs{s}(x)$ which, as $\TYs{s}$ is a decreasing function, implies that $\TYs{s}^{-1}\TXs{s}(x)\leq x$, so the result is proved.
\end{proof}

\begin{prop}
\label{prop2}
Let $X$ and $Y$ be defined as in (\ref{maxdefs}). For every $s\geq 1$, $X\leq_{\sIFRA{s}}Y$.
\end{prop}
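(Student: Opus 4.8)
The plan is to work through the exact sign--variation characterization of the $\sIFRA{s}$ order in Theorem~\ref{convexity-equivalence}(2). By Remark~\ref{AOalpha} only $a>0$ needs to be treated, so I must show that for every $a>0$ the function
\[
V_s(x)=\TYs{s}(x)-\TXs{s}(ax)
\]
has, on $(0,\infty)$, at most the sign variation ``$-,+$''. Two structural facts anchor the argument: first, $V_s(0)=\TYs{s}(0)-\TXs{s}(0)=0$, so the origin is always a root; second, $V_s$ is a linear combination of the five exponentials $e^{-x},e^{-\lambda x},e^{-(\lambda+1)x},e^{-ax},e^{-2ax}$, so Theorem~\ref{thm:zeros} bounds its number of real zeros by the number of sign changes of the coefficient sequence ordered by increasing exponent.

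The range $a\le1$ is immediate from Proposition~\ref{prop1}: there $\TYs{s}(x)\le\TXs{s}(x)$, and since $\TXs{s}$ is decreasing and $ax\le x$ we also have $\TXs{s}(x)\le\TXs{s}(ax)$, whence $V_s\le0$, the admissible pattern ``$-$''. The substance is therefore the range $a>1$. Here the slowest decaying exponential is $e^{-x}$, which occurs only in $\TYs{s}$ with the positive coefficient $1/c(s,\lambda)$, so $V_s(x)\to0^+$ as $x\to+\infty$. To read off the sign of $V_s$ immediately to the right of the origin I would expand at $0$ and locate the first nonvanishing coefficient; its sign flips across a threshold value of $a$ (for $s=1$ the leading term is $(a^2-\lambda)x^2$, with threshold $a=\sqrt\lambda$, and for general $s$ the relevant quantity is $V_s^\prime(0)=\TYs{s}^\prime(0)-a\,\TXs{s}^\prime(0)$).

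With the boundary values $V_s(0)=0$ and $V_s(+\infty)=0^+$ and the near-origin sign in hand, I would combine the Shestopaloff bound with a parity argument: the signs at the two ends fix the parity of the number of interior sign changes, and in the outer regime $a>\lambda$ --- where ordering the exponents yields only two sign changes of the coefficients --- this at once forces the pattern ``$+$'' or ``$-,+$''. The hard part is the intermediate range $1<a<\lambda$: the scaling then inserts $a$ and $2a$ among $1,\lambda,\lambda+1$, and for $(\lambda+1)/2<a<\lambda$ the coefficient signs run ``$+,-,+,-,+$'', so Theorem~\ref{thm:zeros} permits as many as four real zeros. The crude parity count no longer isolates a single interior change, and the forbidden patterns ``$+,-,+$'' (or ``$-,+,-,+$'') are not yet excluded; excluding them is the main obstacle.

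To close this case I would localize the interior roots more precisely, in the spirit of the bootstrap used for Proposition~\ref{prop1}: analyse a suitable derivative of $V_s$, whose values at $0$ and $+\infty$ together with its own coefficient sign count constrain its sign variation, and then integrate back via Lemma~\ref{sign-integral} to transfer the conclusion to $V_s$. A cleaner alternative I would try in parallel is to verify the criterion of Theorem~\ref{AOmain1} for $P_{s-1}$ rather than for $V_s$ directly; the useful simplification there is that
\[
P_{s-1}^\prime(x)=a\,r_{X,1}(ax)-r_{Y,1}(x)
\]
does not depend on $s$, only the additive constant $P_{s-1}(0)$ does, so the monotonicity profile of $P_{s-1}$ is common to all $s$. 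Since $X$ and $Y$ are IFR, $r_{X,1}$ and $r_{Y,1}$ increase from $0$ to $1$, which makes this difference of rescaled failure rates comparatively tractable to sign, the one remaining delicate point being to check that the vertical shift $P_{s-1}(0)$ has the sign needed to avoid a ``$+,-,+$'' pattern when $1<a<\sqrt\lambda$.
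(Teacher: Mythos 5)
Your setup is sound and matches the paper's: reduce to $V_s(x)=\TYs{s}(x)-\TXs{s}(ax)$ with $a>1$ (the range $a\le 1$ being settled by Proposition~\ref{prop1}), bound the number of zeros via Theorem~\ref{thm:zeros}, and you correctly isolate the genuinely hard regime $\frac{\lambda+1}{2}<a<\lambda$, where the ordered coefficient signs are ``$+,-,+,-,+$'' and up to four real zeros are permitted. But your concrete plan for closing that case --- differentiate $s$ times, constrain the sign variation of $V_s^{(s)}$, and integrate back through Lemma~\ref{sign-integral} --- does not work there, and the paper says so explicitly. Each differentiation merely flips all five coefficients, so $V_s^{(s)}$ carries the same four-change sign pattern as $V_s$; even knowing $V_s^{(s)}(0)=0$, its variation on $(0,+\infty)$ can still have three blocks, and propagating that back through the integrations never excludes the forbidden pattern ``$+,-,+$'' for $V_s$. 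Your fallback via $P_{s-1}$ and Theorem~\ref{AOmain1} is left as an unverified sketch, with the decisive sign of $P_{s-1}(0)$ unchecked, so it does not close the gap either.

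The idea you are missing is a domination in $a$. Since $\TXs{s}$ is decreasing and $a>\frac{\lambda+1}{2}$ in this regime, $\TXs{s}(ax)<\TXs{s}\bigl(\tfrac{\lambda+1}{2}x\bigr)$ for $x>0$, hence $V_s(x)>V_{\ast,s}(x)=\TYs{s}(x)-\TXs{s}\bigl(\tfrac{\lambda+1}{2}x\bigr)$. The special value $a=\frac{\lambda+1}{2}$ makes $2a=\lambda+1$, so two exponentials merge, the coefficient pattern collapses to ``$+,-,+,+$'' (two sign changes, at most two real zeros, one of them at the origin), and the derivative bootstrap of Proposition~\ref{prop1} now does apply and yields $V_{\ast,s}\ge 0$, hence $V_s\ge 0$ --- the admissible pattern ``$+$''. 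In short, in the regime where the root count explodes you should not try to control the sign variation of $V_s$ itself, but prove the stronger one-sided bound by comparison with a better-behaved member of the family.
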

\begin{proof}
We need to prove that $t_s(x)=\frac{\TYs{s}^{-1}(\TXs{s}(x))}{x}$ is increasing for $x\geq 0$, or, equivalently, that the sign variation in $(0,+\infty)$ of $t_s(x)-a$ is, at most, ``$-,+$''. The previous corollary means that we need only to consider $0<a\leq 1$. This is still equivalent to proving that, for the described choice for $a$, $\TXs{s}(x)-\TYs{s}(ax)$ behaves, at most, as ``$+,-$''. Reversing this expression, this is equivalent to prove that $V_s(x)=\TYs{s}(x)-\TXs{s}(ax)$ behaves, at most, as ``$-,+$'', now for $a>1$. This is the same formulation as in Theorem~\ref{convexity-equivalence} with a reduced scope for the choice of $a$. To write the expression explicitly, we have
$$
V_s(x)=
\frac{1}{c(s,\lambda)}\left(e^{-x}+\frac{e^{-\lambda x}}{\lambda^{s-1}}-\frac{e^{-(\lambda+1) x}}{(\lambda+1)^{s-1}}\right)
-\frac{2^s e^{-ax}-e^{-2ax}}{2^s-1}.
$$
We will be using Theorem~\ref{thm:zeros} to identify the maximum number the roots of $V_s$ and proceed in a similar way as before to locate them, and infer the sign variation of the function. As in the proof of Proposition~\ref{prop1}, we start by differentiating to obtain
$$
V_s^{(s)}(x)=(-1)^s\left[
\frac{1}{c(s,\lambda)}\left(e^{-x}+\lambda e^{-\lambda x}-(\lambda+1)e^{-(\lambda+1) x}\right)
-2^sa^s\frac{e^{-ax}-e^{-2ax}}{2^s-1}\right],
$$
so we have $V_s^{(s)}(0)=0$. To apply Theorem~\ref{thm:zeros}, we need to order decreasingly with respect to the exponents the exponential terms in $V_s$, which means we need to separate into several cases, depending on the location of $a$ with respect to $\lambda$, verifying in each one that the sign variation of $V_s$ is, at most, ``$-,+$''. Recall that both these parameters are larger or equal than 1.
\begin{description}
\item[Case 1: $1<a<2a<\lambda<\lambda+1$.]
As previously, we need to treat separately the case where $s$ is even from where $s$ is odd.
    \begin{description}
    \item[$s$ even:]
    The sign pattern of the coefficients in $V_s$ and in $V_s^{(s)}$ is now ``$+,-,+,+,-$'', indicating that each function has, at most, three real roots. Moreover, this sign pattern implies that $\lim_{x\rightarrow-\infty}V_s^{(s)}(x)=-\infty$, $\lim_{x\rightarrow+\infty}V_s^{(s)}(x)=0^+$, and $V_s^{(s)}(0)=0$. Therefore, the most sign  varying in $(0,+\infty)$ case for $V_s^{(s)}$ is ``$+,-,+$''. As before, the sign of $V_s^{(s-1)}(0)$ is not determined, so we need to analyze each possibility. Remember that the coefficient signs and the signs at each limit when $x\rightarrow\pm\infty$ reverses with each differentiation, meaning that $\lim_{x\rightarrow-\infty}V_s^{(s-1)}(x)=+\infty$, $\lim_{x\rightarrow+\infty}V_s^{(s-1)}(x)=0^-$. Taking this into account, the possibilities are:
    \begin{center}
    \begin{tabular}{p{3.8cm}cccc}\hline
    $V_s^{(s)}$ & \multicolumn{4}{c}{``$+,-,+$''}  \\ \hline
    $V_s^{(s-1)}(0)$ & \multicolumn{2}{c}{positive} & \multicolumn{2}{c}{negative} \\
    sign variation of $V_s^{(s-1)}$ in $(0,+\infty)$ & \multicolumn{2}{c}{``$+,-$''} & \multicolumn{2}{c}{``$-,+,-$''} \\ \hline
    \end{tabular}
    \end{center}
    To proceed the analysis about $V_s^{(s-2)}$, notice first that the sign of this function at the origin is not determined, and that $\lim_{x\rightarrow-\infty}V_s^{(s-2)}(x)=-\infty$, $\lim_{x\rightarrow+\infty}V_s^{(s-2)}(x)=0^+$. Therefore, the possible sign variations are:
    \begin{center}
    \begin{tabular}{p{3.8cm}cccc}\hline
    $V_s^{(s)}$ & \multicolumn{4}{c}{``$+,-,+$''}  \\ \hline
    $V_s^{(s-1)}(0)$ & \multicolumn{2}{c}{positive} & \multicolumn{2}{c}{negative} \\
    sign variation of $V_s^{(s-1)}$ in $(0,+\infty)$ & \multicolumn{2}{c}{``$+,-$''} & \multicolumn{2}{c}{``$-,+,-$''} \\ \hline
    $V_s^{(s-2)}(0)$ & positive & negative & positive & negative \\
    sign variation of $V_s^{(s-2)}$ in $(0,+\infty)$ & ``$+$'' & ``$-,+$'' & ``$+,-,+$'' & ``$-,+$'' \\ \hline
    \end{tabular}
    \end{center}
    This means that the most sign varying possibility for $V_s^{(s-2)}$ is the same as for $V_s^{(s)}$, hence we may recurse on the argument to arrive at the conclusion that the most sign varying in $(0,+\infty)$ case for $V_s^\prime$ is ``$-,+,-$''. Taking into account that $V_s(0)=0$, $\lim_{x\rightarrow-\infty}V_s(x)=-\infty$, $\lim_{x\rightarrow+\infty}V_s(x)=0^+$, and $V_s$ has, at most, three real roots, its sign variation in $(0,+\infty)$ may, at most, be ``$-,+$''.
    \item[$s$ odd:]
    The sign pattern of the coefficients in $V_s^{(s)}$ is now ``$-,+,-,-,+$'', and $\lim_{x\rightarrow-\infty}V_s^{(s)}(x)=+\infty$, $\lim_{x\rightarrow+\infty}V_s^{(s)}(x)=0^-$, and $V_s^{(s)}(0)=0$. This means that the most sign varying in $(0,+\infty)$ possibility for $V_s^{(s)}$ is now ``$-,+,-$'', which corresponds to the behaviour of the $s-1$ derivative in the previous case, so the same conclusion still holds.
    \end{description}
    Therefore, for this case we have verified that the sign variation in $(0,+\infty)$ of $V_s$ is, at most, ``$-,+$''.

\item[Case 2: $1<a<\lambda<2a<\lambda+1$.]
The sign pattern of the coefficients coincides with the one observed in the previous case, so the result also holds.

\item[Case 3: $1<a<\lambda<\lambda+1<2a$.]
The sign pattern of the coefficients of the function $V_s$ is now ``$+,-,+,-,+$'', hence there could exist up to 4 real roots. Of course, we still have $V_s(0)=0$, so we need to locate the remaining ones. Due to the number of possible roots, a direct usage of the arguments as in the previous cases with $V_s$ does not allow to conclude about a sign variation compatible with $\sIFRA{s}$ order. Note that, in this case, we have $a>\frac{\lambda+1}{2}$, so, for every fixed $x\geq 0$, $\TXs{s}(ax)<\TXs{s}(\frac{\lambda+1}{2}x)$, therefore $V_s(x)=\TYs{s}(x)-\TXs{s}(ax)>V_{\ast,s}(x)=\TYs{s}(x)-\TXs{s}(\frac{\lambda+1}{2}x)$. We shall prove that
$V_{\ast,s}(x)\geq 0$, for $x\geq0$, so the same holds for $V_s$.
Rewriting $V_{\ast,s}$, with the exponentials already ordered decreasingly with respect to their exponents, we have
$$
V_{\ast,s}(x)=\frac{1}{c(s,\lambda)}e^{-x}
  -\frac{2^s}{2^s-1}e^{-\frac{(\lambda+1)}{2}x}
  +\frac{1}{c(s,\lambda)\lambda^{s-1}}e^{-\lambda x}
  +\left(\frac{1}{2^s-1}-\frac{1}{c(s,\lambda)(\lambda+1)^{s-1}}\right)e^{-(\lambda+1)x}.
$$
The coefficient of the last exponential is easily seen to be positive, so the sign pattern of the coefficients in $V_{\ast,s}$ is ``$+,-,+,+$'', hence, besides having $V_{\ast,s}(0)=0$, we have $\lim_{x\rightarrow-\infty}V_{\ast,s}(x)=+\infty$ and $\lim_{x\rightarrow+\infty}V_{\ast,s}(x)=0^+$. Moreover, taking into account Theorem~\ref{thm:zeros}, $V_{\ast,s}$ has, at most, two real roots. To complete the study of the sign variation we need, as before, to separate the cases depending on the value of $s$.
    \begin{description}
    \item[$s$ even.]
    Repeating the arguments above, the sign pattern for the coefficients of $V_{\ast,s}^{(s)}$ is the same as for $V_{\ast,s}$. Therefore, we may repeat the arguments used in course of proof of Proposition~\ref{prop1} for the case where $s$ is even, to derive that $V_{\ast,s}(x)\geq 0$, hence $V_s(x)\geq 0$, for every $x\geq 0$.
    \item[$s$ odd.]
    As in the proof of Proposition~\ref{prop1}, this corresponds to the behaviour of the $s-1$ derivative when $s$ is even, so the result also holds.
    \end{description}

\item[Case 4: $1<\lambda<a<2a<\lambda+1$.]
The sign pattern of the coefficients, after ordering the exponentials, is ``$+,+,-,+,-$'', meaning that are, at most, three real roots. This is exactly the same sign pattern we found in \textbf{Case 1} above. So, repeating the arguments, the same sign variation for $V_s$ follows.

\item[Case 5: $1<\lambda<a<\lambda+1<2a$.]
This is the simplest case to analyse. The sign pattern for the coefficients of $V_s$ is ``$+,+,-,-,+$'', implying that there are, at most, two real roots, $\lim_{x\rightarrow-\infty}V_s(x)=+\infty$ and $\lim_{x\rightarrow+\infty}V_s(x)=0^+$. This is easily seen to be compatible with two possible sign variation in $(0,+\infty)$: ``$-,+$'' or ``$+$''.

\item[Case 6: $1<\lambda<\lambda+1<a<2a$.]
This case produces the same sign pattern for the coefficients as for \textbf{Case 5}, so the same conclusion about the sign variation of $V_s$ follows.
\end{description}
Therefore, we have verified that in all possible cases, the sign variation of $V_s$ is, at most, ``$-,+$'', hence $t_s(x)$ is, for $x\geq 0$, increasing, so the proposition is proved.
\end{proof}
The previous result establishes the $\sIFRA{s}$ order, so may now proceed to the proof of the $\sIFR{s}$ relation between these two random variables.
\begin{thm}
\label{prop3}
Let $X$ and $Y$ be defined as in (\ref{maxdefs}). For every $s\geq 1$, $X\leq_{\sIFR{s}}Y$.
\end{thm}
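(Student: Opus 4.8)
The plan is to feed the $\sIFRA{s}$ relation of Proposition~\ref{prop2} into the reduction of Theorem~\ref{thm:newcrit}: once $X\leq_{\sIFRA{s}}Y$ is available, the star-shapedness disposes of the hard regime $b<0$, and it remains only to control, for $a>0$ and $b\geq 0$, the sign variation of the function $V_s(x)=\TYs{s}(x)-\TXs{s}(ax+b)$ appearing in Theorem~\ref{convexity-equivalence}, which is exactly what the criterion of Theorem~\ref{AOmain} is designed to establish. Because the systems are exponentially based, $V_s$ is itself a linear combination of the exponentials $e^{-x},e^{-\lambda x},e^{-(\lambda+1)x},e^{-ax},e^{-2ax}$, so I would apply Theorem~\ref{thm:zeros} to it directly, with no intermediate integration, aiming to show that its sign variation is at most ``$+,-,+$''.

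Two observations organise the argument. First, the coefficient signs of $V_s$ are $(+,+,-)$ on the $Y$-part and $(-,+)$ on the $X$-part, and they do not depend on $s$; hence the bound on the number of real roots furnished by Theorem~\ref{thm:zeros} depends only on how $a$ and $2a$ interlace with $1<\lambda<\lambda+1$, giving precisely the orderings already catalogued in Proposition~\ref{prop2}. Second, for $b\geq 0$ the boundary data are favourable: $V_s(0)=1-\TXs{s}(b)\geq 0$, with strict positivity when $b>0$, while $\lim_{x\to\infty}V_s(x)=0^+$ when $a>1$ and $0^-$ when $a<1$, the limit being governed by the smallest exponent.

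These two facts let me replace the iterated differentiation of Propositions~\ref{prop1} and \ref{prop2} by a simple parity count, which is the real payoff of working with $b>0$ (where $V_s(0)>0$ is a genuine $+$, in contrast with the degenerate $V_s(0)=0$ forced by $b=0$). In each ordering where Theorem~\ref{thm:zeros} allows at most three roots, the fixed first sign $+$, the known terminal sign, and the parity of the number of sign changes force the variation on $(0,\infty)$ to be ``$+$'' or ``$+,-,+$'' when $a>1$, and ``$+,-$'' when $a<1$; all of these are admissible for $\sIFR{s}$. The value $b=0$ returns $V_s(0)=0$ and coincides with the already-settled $\sIFRA{s}$ configuration, and $a=1$ merely fuses $e^{-x}$ with $e^{-ax}$ into a four-term sum treated identically.

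The main obstacle is the single ordering $1<a<\lambda<\lambda+1<2a$, where the coefficient pattern is ``$+,-,+,-,+$'' and Theorem~\ref{thm:zeros} permits four roots, so parity alone cannot rule out the forbidden variation ``$+,-,+,-,+$''. I would dispose of it exactly as in Case~3 of Proposition~\ref{prop2}, by a majorant: from $a>\frac{\lambda+1}{2}$ and $b\geq 0$ one gets $ax+b>\frac{\lambda+1}{2}x$ for $x>0$, whence $\TXs{s}(ax+b)<\TXs{s}\!\bigl(\frac{\lambda+1}{2}x\bigr)$ and therefore $V_s(x)>\TYs{s}(x)-\TXs{s}\!\bigl(\frac{\lambda+1}{2}x\bigr)$. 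The right-hand side was proved nonnegative in Proposition~\ref{prop2} (its doubled exponent fuses with the $Y$-term $e^{-(\lambda+1)x}$, collapsing the count to at most two roots), so $V_s>0$ throughout and its variation is just ``$+$''. Assembling all the cases shows $V_s$ is at most ``$+,-,+$'' for $b\geq 0$, and Theorem~\ref{thm:newcrit} then delivers $X\leq_{\sIFR{s}}Y$.
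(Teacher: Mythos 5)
Your proof is correct and follows the paper's architecture almost exactly: Proposition~\ref{prop2} together with Theorem~\ref{thm:newcrit} reduces the problem to controlling the sign variation of $V_s(x)=\TYs{s}(x)-\TXs{s}(ax+b)$ for $a>0$ and $b\geq 0$, and this is then done configuration by configuration through the coefficient sign pattern of the five exponentials, Theorem~\ref{thm:zeros}, the value $V_s(0)=1-\TXs{s}(b)>0$, and the sign of $V_s$ at $+\infty$; in every ordering of the exponents that permits at most three real roots the paper draws precisely the parity conclusion you describe (``$+$'' or ``$+,-,+$'' for $a>1$, ``$+,-$'' for $a<1$), with no differentiation needed. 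The one genuine divergence is the case $1<a<\lambda<\lambda+1<2a$, where the pattern ``$+,-,+,-,+$'' permits four roots. The paper attacks it by differentiating $s$ times, using that $V_s^{(s)}(0)=(-1)^{s+1}2^sa^se^{-b}(1-e^{-b})/(2^s-1)$ now has a determined sign precisely because $b>0$, splitting according to the parity of $s$, and propagating sign-variation tables back down to $V_s'$. You instead note that $a>\frac{\lambda+1}{2}$ and $b\geq0$ give $\TXs{s}(ax+b)\leq\TXs{s}\bigl(\tfrac{\lambda+1}{2}x\bigr)$, hence $V_s\geq V_{\ast,s}\geq 0$ by the majorant already established in Case~3 of Proposition~\ref{prop2}, and the variation collapses to ``$+$''. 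This is a legitimate and noticeably shorter route through the only genuinely hard case, since the nonnegativity of $V_{\ast,s}$ was proved there without reference to $b$; what the paper's longer computation buys is self-containedness and an illustration of how the now-determined sign of $V_s^{(s)}(0)$ can be exploited, but your reuse of the existing bound is perfectly sound.
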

\begin{proof}
The plan for the proof is the same as for Proposition~\ref{prop2}. The difference here is that we will be interested in proving the convexity of the relevant functions and we will not be able to automatically locate one of their roots. Taking into account Theorem~\ref{convexity-equivalence}, Remark~\ref{AOalpha} and Theorem~\ref{thm:newcrit} it is sufficient to verify that $V_s(x)=\TYs{s}(x)-\TXs{s}(ax+b)$ changes sign at most twice, in the order ``$+,-,+$'', for every $a>0$ and $b\geq 0$. The case $b=0$ was treated in Proposition~\ref{prop2}, so we may assume in the sequel that $b>0$. Although the function is similar to the one considered on Proposition~\ref{prop2}, one should notice that now $V_s(0)=1-\TXs{s}(b)>0$. Of course, the sign patterns of the coefficients are similar, but we must take into account the extra terms $e^{-b}$ and $e^{-2b}$.

\smallskip

We start by writing explicitly the expression for $V_s$ and its $s$-order derivative $V_s^{(s)}$:
$$
\begin{array}{l}
\displaystyle V_s(x)=
\frac{1}{c(s,\lambda)}\left(e^{-x}+\frac{e^{-\lambda x}}{\lambda^{s-1}}-\frac{e^{-(\lambda+1) x}}{(\lambda+1)^{s-1}}\right)
-\frac{2^s e^{-(ax+b)}-e^{-2(ax+b)}}{2^s-1}, \\ \\
\displaystyle V_s^{(s)}(x)=(-1)^s\left[
\frac{1}{c(s,\lambda)}\left(e^{-x}+\lambda e^{-\lambda x}-(\lambda+1)e^{-(\lambda+1) x}\right)
-2^sa^s\frac{e^{-(ax+b)}-e^{-2(ax+b)}}{2^s-1}\right].
\end{array}
$$
Note that $V_s^{(s)}(0)=\frac{(-1)^{s+1}2^s a^se^{-b}(1-e^{-b})}{2^s-1}$, which has the same sign as $(-1)^{s+1}$, as $b>0$.
\begin{description}
\item[Case 1: $1<a<2a<\lambda<\lambda+1$.]
The sign pattern of the coefficients, after ordering the exponential in decreasing order of their exponents, is ``$+,-,+,+,-$'', so $V_s$ has, at most, three real roots. Moreover, $\lim_{x\rightarrow-\infty}V_s(x)=-\infty$, $\lim_{x\rightarrow+\infty}V_s(x)=0^+$ so, remembering that $V_s(0)>0$, it follows that the sign variation of $V_s$ in $(0,+\infty)$ is either ``$+$'' or ``$+,-,+$''.

\item[Case 2: $1<a<\lambda<2a<\lambda+1$.]
This case is treated exactly as the previous one.

\item[Case 3: $1<a<\lambda<\lambda+1<2a$.]
As before, this case requires a more careful analysis, as the number of possible roots is larger. The sign pattern of the coefficients of the function $V_s$ is now ``$+,-,+,-,+$'', so we may have up to 4 real roots for $V_s$, and $\lim_{x\rightarrow-\infty}V_s(x)=+\infty$, $\lim_{x\rightarrow+\infty}V_s(x)=0^+$. We again separate according to s being even or odd.
\begin{description}
    \item[$s$ even.]
    In this case, we have $V_s^{(s)}(0)<0$, so the sign variation in $(0,+\infty)$ of $V_s^{(s)}$ is either ``$-,+,-,+$'' or `$-,+$''. Now, taking into account that each differentiation step reverses all signs, and that, with the exception of the $s$-order derivative, the sign of the derivatives at the origin is not determined, we have the following possibilities for the sign variations:
    \begin{center}
    {\footnotesize
    \begin{tabular}{p{2.5cm}cccc}\hline
    $V_s^{(s)}$ & \multicolumn{4}{c}{``$-,+$'' or ``$-,+,-,+$''}  \\ \hline
    $V_s^{(s-1)}(0)$ & \multicolumn{2}{c}{positive} & \multicolumn{2}{c}{negative} \\
    sign variation of $V_s^{(s-1)}$ in $(0,+\infty)$ & \multicolumn{2}{c}{``$+,-,+,-$'' or ``$+,-$''} & \multicolumn{2}{c}{``$-$'' or ``$-,+,-$''} \\ \hline
    $V_s^{(s-2)}(0)$ & positive & negative & positive & negative \\
    sign variation of $V_s^{(s-2)}$ in $(0,+\infty)$ &
    $\begin{array}{c}\mbox{``}+\mbox{''}\\\mbox{or}\\\mbox{``}+,-,+\mbox{''}\end{array}$ &
    $\begin{array}{c}\mbox{``}-,+\mbox{''}\\\mbox{or}\\\mbox{``}-,+,-,+\mbox{''}\end{array}$
    &
    $\begin{array}{c}\mbox{``}+\mbox{''}\\\mbox{or}\\\mbox{``}+,-,+\mbox{''}\end{array}$
    & ``$-,+$'' \\ \hline
    \end{tabular}}
    \end{center}
    Hence, the most sign varying possibility, in $(0,+\infty)$, for $V_s^{(s-2)}$ is the same as for $V_s^{(s)}$, so we repeat the argument to obtain that the most sign varying possibility, in $(0,+\infty)$, for $V_s^\prime$ is ``$+,-,+,-$''. Therefore, the monotonicity of $V_s$ is ``$\nearrow\searrow\nearrow\searrow$'', which, remembering that $V_s(0)>0$ implies that the sign variation of $V_s$ may be ``$+$'' or ``$+,-,+$''.
    \item[$s$ odd.]
    Now we have $V_s^{(s)}(0)>0$ which, taking into account the signs for $V_s^{(s)}$ at $\pm\infty$, implies a sign variation, in $(0,+\infty)$, as ``$+,-$'' or ``$+,-,+,-$'', that is, we find the same behaviour as for the $s-1$ derivative in the previous case, so the conclusion about the sign variation of $V_s$ also follows.
    \end{description}

\item[Case 4: $1<\lambda<a<2a<\lambda+1$.]
This case coincides with the behaviour observed for \textbf{Case 1} above, so the conclusion holds.

\item[Case 5: $1<\lambda<a<\lambda+1<2a$.]
The sign pattern of the coefficients of $V_s$, after ordering the exponentials in the usual way, is ``$+,+,-,-,+$'', implying that there are, at most, two real roots, $\lim_{x\rightarrow-\infty}V_s(x)=+\infty$ and $\lim_{x\rightarrow+\infty}V_s(x)=0^+$. As $V_s(0)>0$, the only possibility for the sign variation in $(0,+\infty)$ for $V_s$ is ``$+$'' or ``$+,-,+$''.

\item[Case 6: $1<\lambda<\lambda+1<a<2a$.]
This case coincides with the previous one.

\item[Case 7: $0<a<1$.]
In this case, regardless of the actual value for $a$, the sign pattern of the coefficients is ``$-,+,+,+,-$'', so $\lim_{x\rightarrow-\infty}V_s(x)=-\infty$ and $\lim_{x\rightarrow+\infty}V_s(x)=0^-$. As $V_s(0)>0$, the only possible sign variations in $(0,+\infty)$ is ``$+,-$''.
\end{description}
We have verified that, for all relevant choices of the parameters $a$ and $b$, the assumptions of Theorem~\ref{convexity-equivalence} are satisfied, so it follows that $X\leq_{\sIFR{s}}Y$.
\end{proof}
\begin{rem}
Our Theorem~\ref{prop3} above partially extends Theorem~3.1 in Kochar and Xu~\cite{KX09} to iterated failure ordering. The extension is partial as Theorem~\ref{prop3} deals only with maxima between two random variables, while Kochar and Xu's result deals with arbitrary families of variables.
\end{rem}

 In the result that follows, we compare the ageing properties of a parallel system with $n$ components with independent and identically distributed exponential lifetimes, with a parallel system with $k$ components that are independent and identical exponential lifetimes but assuming that  $k<n$.

\begin{prop}
%
Let $X_1,\ldots,X_m$, $m\geq 3$, be independent random variables with exponential distribution with mean $1/\lambda$, and $Y_1,\ldots,Y_k$, $2\leq k<m$, independent exponential random variables with mean $1/\beta$. If  $X_{(m)}=\max(X_1,\ldots,X_m)$ and $Y_{(k)}=\max(Y_1,\ldots,Y_k)$, then $X_{(m)}\leq_{\sIFR{1}}Y_{(k)}$
\end{prop}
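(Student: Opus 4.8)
The plan is to work directly with the convex transform $c_1(x)=\overline{F}_{Y_{(k)}}^{-1}\bigl(\overline{F}_{X_{(m)}}(x)\bigr)$ and to establish its convexity, which by Definition~\ref{DEF S-IFR} is precisely $X_{(m)}\leq_{\sIFR{1}}Y_{(k)}$. First I would invoke the scale invariance noted after Definition~\ref{DEF S-IFR}: since $(1-e^{-\lambda x})^m$ is obtained from $(1-e^{-x})^m$ by replacing $x$ with $\lambda x$, both systems lie in scale families, so I may renormalize to $\lambda=\beta=1$ without affecting the $\sIFR{1}$ order. Then $\overline{F}_{X_{(m)}}(x)=1-(1-e^{-x})^m$ and $\overline{F}_{Y_{(k)}}(x)=1-(1-e^{-x})^k$.

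The key simplification is the substitution $u=1-e^{-x}$, which maps $[0,\infty)$ increasingly onto $[0,1)$. Writing $p=m/k>1$ (recall $k<m$) and solving $\overline{F}_{Y_{(k)}}(c_1(x))=\overline{F}_{X_{(m)}}(x)$ yields the closed form
\[
c_1(x)=-\log\bigl(1-u^{p}\bigr),\qquad u=1-e^{-x}.
\]
Differentiating and using $\tfrac{du}{dx}=1-u$ gives $c_1'(x)=\dfrac{p\,u^{p-1}(1-u)}{1-u^{p}}$. Because $u$ is a strictly increasing function of $x$, the convexity of $c_1$ (that is, $c_1'$ increasing in $x$) is equivalent to the monotonicity on $(0,1)$ of $\psi(u)=\dfrac{u^{p-1}(1-u)}{1-u^{p}}$.

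The whole problem therefore reduces to showing that $\psi$ is increasing on $(0,1)$, and I would do this via the logarithmic derivative: after multiplying through by the positive quantity $u(1-u)(1-u^{p})$, the sign of $\psi'/\psi$ is governed by $h(u)=u^{p}-p\,u+(p-1)$. This is the one genuine point of the argument, and it is mild: since $h(1)=0$ and $h'(u)=p\,(u^{p-1}-1)<0$ on $(0,1)$ for $p>1$, one gets $h(u)>0$ throughout $(0,1)$ — exactly the tangent-line (Young) inequality for the convex map $t\mapsto t^{p}$. Hence $\psi'>0$, $c_1$ is convex, and $X_{(m)}\leq_{\sIFR{1}}Y_{(k)}$ follows.

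A remark on why I avoid the sign-variation machinery of Theorem~\ref{convexity-equivalence} and Theorem~\ref{thm:zeros} used elsewhere in this section: expanding $\overline{F}_{Y_{(k)}}(x)-\overline{F}_{X_{(m)}}(ax+b)$ by the binomial theorem produces two blocks of exponentials whose coefficients alternate in sign, so the coefficient sequence exhibits many sign changes and Theorem~\ref{thm:zeros} would not deliver the required bound of at most two roots cleanly. For the single case $s=1$ the explicit transform above is far more transparent, and the only real obstacle — controlling the sign of $h$ — is settled by an elementary convexity inequality.
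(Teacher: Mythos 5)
Your proposal is correct and follows essentially the same route as the paper: both reduce the claim to the convexity of $c_1(x)=-\log\bigl(1-(1-e^{-x})^{m/k}\bigr)$ and then to the positivity of one and the same function, since your $h(u)=u^{p}-pu+(p-1)$ is exactly the paper's $Q(x)=\frac{m}{k}e^{-x}+(1-e^{-x})^{m/k}-1$ under the substitution $u=1-e^{-x}$, $p=m/k$. The only difference is that you supply the short tangent-line argument ($h(1)=0$, $h'<0$ on $(0,1)$) for the positivity that the paper merely asserts.
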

\begin{proof}
As the parameters $\lambda$ and $\beta$ are scale parameters we may take $\lambda=\beta=1$. The random variables $X_m$ and $Y_k$ have the following tail distributions: $\overline{F}_m(x)=1-(1-e^{-x})^m$, $\overline{F}_k(x)=1-(1-e^{-x})^k$, respectively. The proposition follows by proving that $c_1(x)=\overline{F}^{-1}_k(\overline{F}_m)=-\log(1-(1-e^{-x})^{\frac{m}{k}})$ is convex. As $c_1(x)$ is differentiable, the convexity is characterized by the nonnegativeness of the second derivative. Computing derivatives, and taking into account that the sign of $c_1^{\prime\prime}$ is determined by the sign of its numerator, it can be seen that the sign of $c_1^{\prime\prime}$ is the same as the sign of
$$
Q(x)=\frac{m}{k}e^{-x}+(1-e^{-x})^{\frac{m}{k}}-1,
$$
which is positive for every $x\geq 0$ and $2\leq k<m$, so the conclusion follows.
\end{proof}
As an immediate consequence, we have the following ordering for the order statistics.
\begin{cor}
Let $X_1,\ldots,X_n$, $n\geq 3$, be independent random variables with exponential distribution with mean $1/\lambda$, and define, for each $k=2,\ldots,n$,\linebreak $X_{(k)}=\max(X_1,\ldots,X_k)$. Then $X_{(n)}\leq_{\sIFR{1}}X_{(n-1)}\leq_{\sIFR{1}}\cdots\leq_{\sIFR{1}}X_{(2)}$.
\end{cor}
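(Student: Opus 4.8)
The plan is to read the whole chain off the preceding Proposition, applied one link at a time, and then to concatenate the resulting comparisons using the transitivity of the $\sIFR{1}$ order. That is, I would first establish the single-step relation $X_{(j)}\leq_{\sIFR{1}}X_{(j-1)}$ for each $j=3,\ldots,n$, and then splice these $n-2$ relations into the announced chain $X_{(n)}\leq_{\sIFR{1}}X_{(n-1)}\leq_{\sIFR{1}}\cdots\leq_{\sIFR{1}}X_{(2)}$.

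For a fixed $j$ with $3\leq j\leq n$, I would invoke the Proposition with the first system taken to be $X_{(j)}=\max(X_1,\ldots,X_j)$ and the second to be $X_{(j-1)}=\max(X_1,\ldots,X_{j-1})$, setting $\beta=\lambda$ and $(m,k)=(j,j-1)$. The hypotheses $m\geq 3$ and $2\leq k<m$ then read $j\geq 3$ and $2\leq j-1<j$, both of which hold throughout the stated range. Since the $\sIFR{1}$ order is a property of the two marginal distributions alone and does not involve any joint law, the fact that $X_{(j)}$ and $X_{(j-1)}$ are built from overlapping variables is immaterial; the Proposition directly delivers $X_{(j)}\leq_{\sIFR{1}}X_{(j-1)}$.

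The only genuinely separate ingredient is transitivity of the $\sIFR{1}$ order, which I would verify as follows. The relation $X\leq_{\sIFR{1}}Y$ asserts that $\overline{F}_Y^{-1}\circ\overline{F}_X$ is convex, and since both $\overline{F}_X$ and $\overline{F}_Y^{-1}$ are decreasing, this map is in fact increasing and convex. Writing $\overline{F}_Z^{-1}\circ\overline{F}_X=(\overline{F}_Z^{-1}\circ\overline{F}_Y)\circ(\overline{F}_Y^{-1}\circ\overline{F}_X)$ exhibits the composite as a composition of two increasing convex functions, hence again increasing and convex; thus $X\leq_{\sIFR{1}}Y$ and $Y\leq_{\sIFR{1}}Z$ imply $X\leq_{\sIFR{1}}Z$. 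Applying this repeatedly to the single-step relations completes the chain.

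As for the main obstacle, there is essentially no analytic difficulty left, since the Proposition performs all the sign-variation work. The only point requiring care is the bookkeeping at the endpoints: one must check that the index conditions $m\geq 3$ and $2\leq k<m$ are met at every link, and note that they would fail if one attempted to compare with $X_{(1)}$ (forcing $k=1$), which is precisely why the chain terminates at $X_{(2)}$. One should also confirm that the orientation produced by the Proposition — the larger system sitting on the left of $\leq_{\sIFR{1}}$ — matches the direction claimed in the statement, which it does.
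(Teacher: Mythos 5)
Your proof is correct and takes essentially the same approach as the paper, which offers no separate argument and simply declares the corollary an immediate consequence of the preceding proposition applied with $\beta=\lambda$ and $(m,k)=(j,j-1)$ for each adjacent pair. Your explicit check of transitivity is sound but not strictly required, since the stated chain only asserts the adjacent comparisons that the proposition delivers directly.
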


\begin{rem}
Kochar and Xu~\cite{KX09} mention an unsolved problem for which they announce having empirical evidence although no mathematical proof could be obtained. This unsolved problem is stated as follows: let $X_i$ be independent exponentially distributed variables with means $1/\lambda_i$, and $Y_i$ be independent exponentially distributed variables with means $1/\theta_i$; if $(\lambda_1,\ldots,\lambda_n)\prec(\theta_1,\ldots,\theta_n)$, in the sense of Definition~A.1 in Marshal and Olkin~\cite{MO79} i.e.  $\sum_{i=1}^k\lambda_{(i)}\geq\sum_{i=1}^k\theta_{(i)}$, for $k=1,\ldots,n-1$, and $\sum_{i=1}^n\lambda_i=\sum_{i=1}^n\theta_i$ where $\lambda_{(1)}\leq \lambda_{(2)}\leq\cdots\leq\lambda_{(n)}$ then one should expect\linebreak $\max(X_1,\ldots,X_n)\leq_{\sIFR{1}}\max(Y_1,\ldots,Y_n)$. For the particular conjecture, we have evidence that it is in general not true for iterated failure rate order as long as the iteration parameter $s\geq 2$. We provide an example were this ordering does not hold. Let $X_1$, and $X_2$ be independent exponential random variables with means $1/\lambda_i$, $i=1,2$, and $Y_1$ and $Y_2$ be independent exponential random variables with means $1/\theta_i$, $i=1,2$. Assume, without loss of generality, that $\lambda_1\leq \lambda_2$ and $\theta_1\leq\theta_2$, and that $(\lambda_1,\lambda_2)\prec(\theta_1,\theta_2)$, i.e. $\lambda_1+\lambda_2 = \theta_1+\theta_2$ and $\lambda_1\geq \theta_1$. Write, for simplicity, $X=\max\{X_1,X_2\}$ and $Y=\max\{Y_1,Y_2\}$ and consider $V_s(x)=\TYs{s}(x)-\TXs{s}(ax)$, where $a>0$. If we choose the parameters $(s,\frac{\lambda_1}{\lambda_2},\frac{\theta_2}{\theta_1}, a) = (2,0.34,11,2.89)$ the sign variation of $V_s(x)$ is ``$-,+,-$'' so, according to Theorem~\ref{convexity-equivalence}, $X$ and $Y$ are not comparable with respect to the $\sIFRA{2}$ order, hence they cannot be comparable with respect to the $\sIFR{2}$ order. The particular choice for the parameters made above gives raise to a family of possible choices for the vectors $(\lambda_1,\lambda_2)$ and $(\theta_1,\theta_2)$ leading to counter-examples. Indeed, taking into account the order relation $(\lambda_1,\lambda_2)\prec(\theta_1,\theta_2)$, it follows that $(\lambda_1,\lambda_2,\theta_1,\theta_2)=\frac{1}{1474}(408,1200,134,1474)\vartheta$, with $\vartheta>0$, generates a whole family of counter-examples for the conjecture when the iteration parameter $s=2$. Alike Kochar and Xu, we cannot find a counter-example for the case $s=1$, nor provide a proof for such a result.
%
%
\end{rem}


%
%
%
%

\end{document}